\newtheorem*{theorem*}{Theorem}
\declaretheorem[style=definition,qed=$\blacksquare$,numberwithin=section]{definition}
\declaretheorem[style=definition,qed=,sibling=definition]{remark}
\declaretheorem[style=definition,qed=$\blacksquare$, sibling = definition]{lemma-definition}
\declaretheorem[style=theorem, sibling = definition]{theorem}
\declaretheorem[style=theorem, sibling = definition]{lemma}
\declaretheorem[style=theorem, sibling = definition]{proposition}
\declaretheorem[style=theorem, sibling = definition]{claim}
\declaretheorem[style=definition, sibling = definition]{example}
\declaretheorem[style=definition, sibling = definition]{conjecture}
\newenvironment{customthm}[1]
  {\innercustomthm}
  {\endinnercustomthm}
\newcommand{\Q}{\mathbb{Q}}
\newcommand{\QQ}{\Q}
\newcommand{\Z}{\mathbb{Z}}
\newcommand{\ZZ}{\Z}
\newcommand{\N}{\mathbb{N}}
\newcommand{\NN}{\mathbb{N}}
\newcommand{\C}{\mathbb{C}}
\newcommand{\Lm}{\mathcal{L}}
\newcommand{\cM}{\mathcal{M}}
\newcommand{\cO}{\mathcal{O}}
\newcommand{\ord}{\mathrm{ord}}
\newcommand{\Lring}{\Lm_{\text{ring}}}
\newcommand{\ac}{\text{ac}\,}
\def\cL{{\mathcal L}}
\def\cC{{\mathcal C}}
\def\CC{{\mathbb C}}
\def\BB{{\mathbb B}}
\def\cB{{\mathcal B}}
\newcommand{\pball}{\mathbb{B}_{1}}
\title{Exponential-constructible functions in $P$-minimal structures}
\author{Saskia Chambille}
\author{Pablo Cubides Kovacsics }
\author{Eva Leenknegt}
\address{Saskia Chambille, KU Leuven, Department of Mathematics, Celestijnenlaan 200B, 3001 Heverlee, Belgium }
\email{saskia.chambille@kuleuven.be}
\address{Pablo Cubides Kovacsics, Universit\'e de Caen, Laboratoire de math\'ematiques Nicolas Oresme, CNRS UMR 6139,
14032 Caen cedex, France }
\email{pablo.cubides@unicaen.fr}
\address{Eva Leenknegt, KU Leuven, Department of Mathematics, Celestijnenlaan 200B, 3001 Heverlee, Belgium }
\email{eva.leenknegt@kuleuven.be}
\begin{document}


%
%
%


\maketitle

\begin{abstract}
Exponential-constructible functions are an extension of the class of constructible functions. This extension was formulated by Cluckers-Loeser in the context of semi-algebraic and sub-analytic structures, when they studied stability under integration.

In this paper we will present a natural refinement of their definition that allows for stability results to hold within the wider class of P-minimal structures. One of the main technical improvements is that we remove the requirement of definable Skolem functions from the proofs. As a result, we obtain stability in particular for all intermediate structures between the semi-algebraic and the sub-analytic languages.

\noindent \textit{Keywords: $P$-minimality, $p$-adic integration, constructible functions, exponential-constructible functions.}

\noindent \textit{MSC: 12J12, 12J25, 11S80, 11U09, 28A25, 03C52}
\end{abstract}


\normalem

\section{Introduction}
 
Stability under integration is a problem that has been studied in many different contexts. In this paper we will revisit some existing results, and generalize them to the wider context of $P$-minimality. Part of the difficulty lies in considering $P$-minimal expansions which do not have Skolem functions. As we shall explain later, in situations where such functions do not exist, many of the classical strategies fail. New ideas are used in order to avoid this assumption.
\\\\
In the first part of this introduction we will briefly recall the existing stability results for constructible and exponential-constructible functions. In the second part we will introduce and motivate our refinement of the class of exponential-constructible functions and in the third part we will state our main results.

\subsection{The algebra $\cC_\textnormal{exp}$ of exponential-constructible functions}
Let $K$ be a $p$-adically closed field, i.e. a field elementarily equivalent to a finite extension of $\Q_p$. We use the notation $\ord:K\to\Gamma_K\cup\{\infty\}$ for the valuation map, $\Gamma_K$ for the value group, $\cO_K$ for the valuation ring of $K$, $\mathcal{M}_K$ for the maximal ideal, $q_K$ for the number of elements of the residue field $k_K$ and $\pi_K$ for a uniformizing element. Let us first recall the definition of $P$-minimality.
\begin{definition}[Haskell, Macpherson \cite{has-mac-97}]
A structure $(K,\Lm)$ is called $P$-minimal if $K$ is a $p$-adically closed field, $\Lm \supseteq \Lring$, and for every structure $(K', \Lm)$ elementarily equivalent to $(K, \Lm)$, one has that the $\Lm$-definable subsets of $K'$ coincide with the $\Lring$-definable subsets of $K'$. 
\end{definition}
It is natural to extend this notion to two-sorted structures $(K, \Gamma_K; \Lm_2)$, by extending the language $\Lm$ to a two-sorted language $\Lm_2$. For the value group sort $\Gamma_K$, the language $\Lm_2$ is the Presburger language for ordered abelian groups. We also add the valuation map $\ord$ as a connective between both sorts. The minimality notion remains the same, as no further minimality requirements are put on the value group sort. For a more in-depth discussion of $2$-sorted $P$-minimality, we refer to \cite[Section 2]{cubi-leen}. We will write definable rather than $\Lm_2$-definable when the language used is clear from the context.
\\\\
In this paper we will be considering integrals of certain classes of functions. As integration is not necessarily well-defined for $p$-adically closed fields in general, our theorems concerning integration will only be phrased for $p$-adic fields, i.e., finite extensions of $\Q_p$, where $\Gamma_K = \Z$.
Integration will be with respect to the Haar measure on $K$ (normalised such that $\cO_K$ has measure 1), and the counting measure on $\ZZ$. Given definable sets $S$ and $Y$ (which may contain both $K$-sorted and $\ZZ$-sorted variables), let $X\subseteq S\times Y$ be definable. We will use the notation $\pi_S(X)$ for the projection of $X$ onto $S$. For functions $f:X \to \C$, the locus of integrability of $f$ with respect to $Y$ is defined as
\[
\text{Int}(f,Y)=\{s\in S: f(s,\cdot) \text{ is measurable and integrable over $X_s$} \}.
\]\\
Let us now introduce the classes of functions we are interested in. Consider an additive character $\psi: K \to \CC^\times$, such that $\psi_{|\mathcal{M}_K} = 1$ and $\psi_{|\cO_K} \neq 1$.
An example of such a function on $\QQ_p$ is 
\[
\psi(x)=\exp\left(\frac{2\pi ix'}{p}\right),
\]
where $x' \in \ZZ[\frac{1}{p}] \cap (x + \mathcal{M}_K)$.
\\\\The algebras of ``constructible'' and ``exponential-constructible'' functions were originally introduced by Denef \cite{denef-85} and Cluckers-Loeser \cite{clu-loe-08} respectively. The following definition is a rephrasing using the terminology of two-sorted structures.



\begin{definition}\label{def:constructible}
Let $(K, \Z; \Lm_2)$ be a $P$-minimal structure and $X$ a definable set. 
\begin{itemize}
\item[(i)] The algebra $\cC(X)$ of \emph{$\cL_2$-constructible functions on $X$} is the $\QQ$-algebra generated by constant functions and functions of the forms 
\[\alpha:X\to \ZZ \qquad \text{and} \qquad
X\to \QQ:x\mapsto q_K^{\beta(x)},\]
where $\alpha$ and $\beta$ are definable and $\ZZ$-valued.
\item[(ii)]The algebra $\cC_{\textnormal{exp},\psi}(X)$ of \emph{$\cL_2$-exponential-constructible functions on $X$} is the $\QQ$-algebra generated by functions in $\cC(X)$ and functions of the form $\psi\circ f$ where $f:X\to K$ is definable. \qedhere
\end{itemize}
\end{definition}
\noindent We will write $\cC_\textnormal{exp}(X)$ rather than $\cC_{\textnormal{exp},\psi}(X)$ whenever $\psi$ is clear from the context.  
\\\\
Stability for constructible functions was first studied Denef \cite{denef-85} for semi-algebraic structures (i.e. structures over $\Lring$). Later his results were generalized by Cluckers \cite{clu-2003} to sub-analytic structures (i.e. structures over $\Lm_{\text{an}}$ - see e.g. \cite{clu-2003} for a definition).  These results, along with similar results by Cluckers-Loeser \cite{clu-loe-08} for exponential-constructible functions, give a type of stability that can be defined as follows:

\begin{definition}
Let $(K,\Z; \Lm_2)$ be a two-sorted structure. We say that a class $\mathcal{H}$ of $\mathbb{C}$-valued functions on this structure is \emph{base-stable under integration} if, for every definable set $X\subseteq S\times Y$ and $f:X \to \C$ with $f\in \mathcal{H}$ and $\textnormal{Int}(f,Y)=S$, there exists $g: S \to \C$ such that $g\in \mathcal{H}$ and for all $s\in S$,
\[
g(s) = \int_{X_s} f(s,x) |dx|.  
 \] 
We say that $\mathcal{H}$ is \emph{base-stable under integration over $K$-variables} if the previous condition holds for all definable sets $X\subseteq S\times Y$ where $Y\subseteq K^m$ for some $m\geqslant 1$.
\end{definition}
Denef and Cluckers first proved the following.
\begin{theorem}[Denef, Cluckers] \label{thm:d-c} If $\cL$ is either $\cL_\textnormal{ring}$ or $\cL_\textnormal{an}$, then  the algebras of constructible functions are base-stable under integration over $K$-variables.
\end{theorem}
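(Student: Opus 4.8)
The plan is to recreate the original argument of Denef \cite{denef-85} and Cluckers \cite{clu-2003}: use cell decomposition to reduce a $p$-adic integral over a $K$-variable to a summation over the value group, and then evaluate that summation explicitly. First I would argue by induction on the number $m$ of $K$-sorted coordinates occurring in $Y$. For $m>1$, write $Y = Y'\times K$; by Fubini's theorem the inner integral $F(s,y') := \int f(s,y',x)\,|dx|$ is defined for all $(s,y')$ outside a definable set whose fibres over $S$ are empty or of measure zero, lies in $\cC$ on its domain by the case $m=1$, and satisfies $\int_{X_s} f(s,\cdot) = \int F(s,\cdot)\,|dy'|$; extending $F$ by $0$ and applying the induction hypothesis then yields the statement. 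So it suffices to treat $Y\subseteq K$.

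For $Y\subseteq K$ I would apply cell decomposition relative to $S$ — available over both $\cL_\textnormal{ring}$ \cite{denef-85} and $\cL_\textnormal{an}$ \cite{clu-2003}, which admit definable Skolem functions — refining it so as to be adapted to $f$: write $X = \bigsqcup_i A_i$ with each $A_i$ a cell over $\pi_S(A_i)$ with definable center $c_i$ and such that the restriction of $f$ to $A_i$ factors through $(s,x)\mapsto (s,\ord(x - c_i(s)))$. Graph-type cells contribute $0$. On an open cell, the change of variables $x\mapsto (k,\xi) := (\ord(x - c_i(s)),\, \acbar(x - c_i(s)))$ shows that $\int_{(A_i)_s} f(s,x)\,|dx|$ equals a finite sum, over the finitely many admissible angular components $\xi$, of expressions $\sum_{k}\, \mathrm{const}_\xi\, q_K^{-k}\, g_i(s,k)$, where $g_i\in\cC$ is the pullback of $f$ and $k$ ranges over a Presburger-definable subset of $\Z$ depending on $s$. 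Since $\text{Int}(f,Y)=S$, each of these sums converges absolutely for every $s\in S$.

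It therefore remains to prove the summation lemma: if $g\in\cC(X)$ with $X\subseteq S\times\Z$ and $\sum_{k\in X_s} g(s,k)$ converges absolutely for all $s\in S$, then $s\mapsto\sum_k g(s,k)$ lies in $\cC(S)$. Using the definition of $\cC$ together with Presburger elimination on the $\Z$-sort, one decomposes $X$ into finitely many pieces on which $X_s$ is an interval $[a(s),b(s))$ (with $a(s),b(s)\in\Z\cup\{\pm\infty\}$) inside a fixed arithmetic progression and $g(s,k) = \sum_t \gamma_t(s)\binom{k}{j_t} q_K^{e_t k}$ with $\gamma_t\in\cC(S)$ and $e_t\in\Z$, $j_t\in\N$ fixed. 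Summing such a series by the finite and geometric summation formulas produces a $\Q$-linear combination of terms $\gamma_t(s)\,q_K^{(\text{affine in the definable data})}$ together with the ``resonant'' contributions coming from the indices $t$ with $e_t\geq 0$ and an infinite endpoint; absolute convergence for every $s$ forces $\gamma_t\equiv 0$ on $S$ for those $t$, so they disappear, and what survives is manifestly in $\cC(S)$. Summing over the finitely many pieces and then over the finitely many cells completes the proof.

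The main obstacle is the summation lemma, and specifically the bookkeeping showing that the non-summable monomials cancel under the convergence hypothesis, so that the closed form genuinely lies in $\cC$ rather than in a larger class of rational expressions in $q_K$; one must also ensure that the Presburger decomposition of $X_s$ and the ensuing geometric summation are uniform in $s$. A secondary technical point is the Fubini reduction: one has to check that the intermediate integrals $F$ are constructible on a set large enough that extending by $0$ changes neither the integrals nor the integrability hypothesis — routine here precisely because $\cL_\textnormal{ring}$ and $\cL_\textnormal{an}$ enjoy cell decomposition and definable selection, which (as the remainder of the paper shows) is exactly the assumption one wants to remove.
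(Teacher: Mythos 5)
Your proposal is essentially the classical Denef--Cluckers argument that this theorem cites: the paper gives no proof of its own (it quotes the result and notes that cell decomposition in the style of Denef is the main tool), and your route --- Fubini reduction to one $K$-variable, cell decomposition adapted to $f$ so the integrand factors through $\ord(x-c_i(s))$, then a Presburger-type summation lemma in which the non-summable monomials must have vanishing (collected) coefficients by the integrability hypothesis --- is exactly that argument. The only nitpick is that convergence forces the vanishing of the combined coefficient of each non-decaying monomial $k^{j}q_K^{ek}$ rather than of each $\gamma_t$ individually, a bookkeeping point you already flag.
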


Cell decomposition theorems in the style of Denef \cite{denef-86} are the main tool used in the proofs of Theorem \ref{thm:d-c}. Note however that, by results of Mourgues \cite{mou-09} and Darni\`ere-Halupzcok \cite{Dar-Hal-2017}, a $P$-minimal field satisfies such a \emph{classical} cell decomposition theorem if and only if it admits definable Skolem functions as well. This means that to generalize such stability results to the full class of $P$-minimal structures, a somewhat alternative approach is needed. Moreover, the second author and Nguyen recently provided an example showing that $P$-minimal structures that do not admit definable Skolem functions do indeed exist. In order to obtain the generalization stated in Theorem \ref{thrm:Cubides-Leenknegt} below, the second and third author  \cite{cubi-leen} used a weaker cell decomposition theorem valid in all $P$-minimal structures.
\begin{theorem}\label{thrm:Cubides-Leenknegt}
Let $(K, \ZZ; \cL_2)$ be $P$-minimal. Then the algebras of constructible functions are base-stable under integration.  
\end{theorem}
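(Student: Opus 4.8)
The plan is to reduce, by a Fubini-type argument, to the case where the integration variable is a single copy of $K$ or a single copy of $\Z$, and then in each case to use an appropriate cell decomposition to turn the integral into the summation of a geometric-type series, whose closed form lies again in $\cC$ because its coefficients involve only the fixed integer $q_K$ (so rational numbers like $\tfrac{1}{1-q_K^{-1}}$) together with the functions already present in $\cC$. Concretely: writing $Y\subseteq K^m\times\Z^n$, the hypothesis $\Int(f,Y)=S$ guarantees absolute integrability of $f(s,\cdot)$ over $X_s$ for every $s\in S$, so one may integrate out the coordinates of $Y$ one at a time, at each step absorbing the remaining coordinates of $Y$ into the base; this requires some care about the intermediate loci of integrability, but it is handled exactly as in the constructible case treated for Theorem~\ref{thm:d-c}, so I will assume from now on that $Y$ is a single variable.

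First the case of a $\Z$-variable, $X\subseteq S\times\Z$: apply Presburger cell decomposition uniformly in $s$, so that $X_s$ becomes a finite union of singletons and of progressions $\{\,n\equiv r \pmod N : L_1(s)\leqslant n\leqslant L_2(s)\,\}$ (possibly unbounded on one side) with the $L_i$ definable and $\Z$-valued. On each such piece every definable $\Z$-valued function occurring in $f$ is affine in $n$, so $f(s,n)$ is a $\Q$-linear combination of terms $P(n)\,q_K^{an+b(s)}$ with $P\in\Q[n]$ and $a\in\Z$; summing such a term over the progression, in the convergent regime that the integrability hypothesis guarantees, produces a $\Q$-linear combination of terms $Q(L_i(s))\,q_K^{a'L_i(s)+b'(s)}$ and constants, hence an element of $\cC(S)$.

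The case of a $K$-variable, $X\subseteq S\times K$, is the crux, and it is where the main obstacle lies: the classical proofs compute the $t$-integral on a Denef-style cell using a genuine definable center $c(s)$ and then read off the answer as a function of $s$, and the existence of such centers is equivalent to definable Skolem functions, which we do not have. The way around this is to use the cell decomposition theorem available in \emph{all} $P$-minimal structures, which writes $X$ as a finite union of cells, each of the form
\[
\{(s,t) : s\in D,\ \alpha_1(s)\,\square_1\,\ord(t-c(s))\,\square_2\,\alpha_2(s),\ t-c(s)\in\lambda\,(K^\times)^{N}\},
\]
with the crucial difference that the center $c$ need only exist as a finitely multivalued definable object, not as a definable function. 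After further refinement one may assume that the $\Z$-valued data of $f$ on a cell depends only on $s$, on $\gamma:=\ord(t-c(s))$, and on the leading residue of $t-c(s)$, so that $f(s,t)$ is a $\Q$-linear combination of terms $\rho(s,t)\,q_K^{\beta(s,\gamma)}$ with $\beta$ affine in $\gamma$ and $\rho$ depending on only finitely many residues. Changing variables $u=t-c(s)$ (Jacobian $1$), the fiber integral becomes $\sum_\gamma(\text{fixed rational constant})\cdot q_K^{-\gamma}\,q_K^{\beta(s,\gamma)}$, a geometric-type series in $\gamma$ over a Presburger subset of $\Z$ depending on $s$, summed exactly as in the previous paragraph. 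The point that makes this work is that the center $c$ has disappeared: every quantity that entered the computation is translation-invariant, so only the radii data $(\alpha_1(s),\alpha_2(s),N,\lambda)$ — which \emph{is} genuinely definable over $S$ — matters, and the non-uniqueness and non-definability of the center are harmless.

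Finally, the cells covering $X$ may overlap; replacing them by a definable partition (or applying inclusion–exclusion) and summing the finitely many contributions, each in $\cC(S)$, yields an element of $\cC(S)$ since $\cC(S)$ is a $\Q$-algebra closed under finite sums. I expect that the genuinely new work, beyond bookkeeping, is entirely concentrated in the third step: running the whole preparation-and-integration argument on cells whose centers are only finitely multivalued, and checking that the translation-invariance of Haar measure and of prepared constructible integrands really does let one forget the center everywhere it would otherwise be needed.
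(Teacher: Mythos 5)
The paper itself contains no proof of this theorem: it is imported from \cite{cubi-leen}, and your outline follows essentially the same route as that reference --- a Fubini reduction to a single variable, Presburger cell decomposition and summation of geometric-type series for the $\ZZ$-variable, and, for the $K$-variable, a cell decomposition valid in \emph{all} $P$-minimal structures in which the centers exist only as finitely multivalued, possibly non-definable objects, combined with the observation that Haar measure and the prepared integrand are translation invariant, so that only the definable radius and coset data $(\alpha_1(s),\alpha_2(s),\lambda,n,m)$ and the (definably constant) number of centers survive into the answer.

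The one place where your write-up understates the difficulty is the sentence ``after further refinement one may assume that the $\ZZ$-valued data of $f$ on a cell depends only on $s$, on $\gamma=\ord(t-c(s))$, and on the leading residue''. In the classical setting this refinement is obtained by applying cell decomposition to the graphs of the definable functions $\alpha\colon X\to\ZZ$ occurring in $f$, which produces genuine definable centers; without definable Skolem functions that argument is unavailable, and establishing this preparation relative to cells whose ``center'' is a cluster of $k$ non-definable sections --- uniformly over the cluster, so that the prepared data does not depend on which of the $k$ centers one measures $\ord(t-\sigma_i(s))$ against, and with $k$ made constant by a definable partition --- is precisely the main content of \cite{cubi-leen} (and of its clustered refinement, Theorem \ref{thm:celldecomposition}). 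Translation invariance lets you forget the center \emph{once the preparation is in hand}, but it does not provide the preparation. You do correctly flag this step as the locus of the genuinely new work, which is the right diagnosis; just be aware that it is theorem-level input rather than routine refinement, and likewise the one-variable-at-a-time Fubini step, with its intermediate loci of integrability, is being quoted from the constructible literature rather than re-proved.
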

The aim of this paper is to make similar generalizations for the case of exponential-constructible functions, valid for all $P$-minimal expansions of $p$-adic fields. The clustered cell decomposition proven by the authors in \cite{cham-cubi-leen}--an enhancement of the weak cell decomposition theorem proven in \cite{cubi-leen}-- will play a crucial role in our proofs. 
\\\\
From now on, we will refer to the \emph{Skolem setting} when working over $P$-minimal fields that admit definable Skolem functions (and hence satisfy a classical cell decomposition theorem by \cite{mou-09, Dar-Hal-2017}). The term \emph{non-Skolem setting} refers to situations where we explicitly assume we are working over   $P$-minimal fields that do not admit such definable Skolem functions. When we make no assumptions either way, we will refer to the 
\emph{$P$-minimal setting}. Also, when referring to \emph{general $P$-minimal structures}, this means that we are considering not only $P$-minimal expansions of $p$-adic fields but rather $P$-minimal expansions of arbitrary $p$-adically closed fields.

Before we state our own results, let us first take a moment to look at prior results. The original result by Cluckers-Loeser required a further assumption on the form of definable functions:


\begin{theorem}[Cluckers-Loeser] \label{thm:cluckers-loeser} Let $\cL$ be either $\cL_\textnormal{ring}$ or $\cL_\textnormal{an}$, $X\subseteq S \times K^n$ a definable set and $f\in \cC_\textnormal{exp}(X)$ with
 \begin{equation} \label{eq:cluckers-loeser}f(s,x) = \sum_{i=1}^m h_{i}(s,x)\psi(f_i(s,x)),\end{equation}
 where $h_i \in \mathcal{C}(X)$ with $\text{Int}(h_i,K^n)=S$ and $f_i$ is a definable function. Then there exists $g \in \mathcal{C}_{\textnormal{exp}}(X)$ such that for all $s \in S$, 
 \[g(s) = \int_{X_s} f(s,x)|dx|.\] 
\end{theorem}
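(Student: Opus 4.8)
The plan is to reduce the computation of $\int_{X_s} f(s,x)|dx|$ to an iterated integral, integrating out one $K$-variable at a time, and for each single-variable step to use a cell decomposition adapted to the phase functions $f_i$. First I would split $f$ according to its summands: since integration is linear, it suffices to treat a single term $h(s,x)\psi(f_0(s,x))$ with $h\in\cC(X)$ and $\mathrm{Int}(h,K^n)=S$; the hypothesis that each $h_i$ is integrable guarantees the pieces can be handled separately without worrying about cancellation obscuring integrability. Then I would set up an induction on $n$, the number of $K$-variables over which we integrate. The base case $n=1$ is the heart of the matter; the inductive step is a Fubini-type argument, writing $X_s\subseteq K^{n-1}\times K$, integrating first over the last coordinate to land (by the $n=1$ case, applied with parameter space $S\times K^{n-1}$) in $\cC_{\textnormal{exp}}(\pi_{S\times K^{n-1}}(X))$, and then invoking the induction hypothesis — with a small caveat that one must check the resulting integrand still has the right shape \eqref{eq:cluckers-loeser}, i.e.\ is a $\cC$-combination of $\psi$ of definable functions, which follows because $\cC_{\textnormal{exp}}$ is generated by such products and the $n=1$ output is manifestly of this form.

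For the base case $n=1$, I would apply a cell decomposition (in the Skolem setting, Denef's classical cell decomposition) to the pair consisting of the constructible factor $h$ and the phase $f_0$, arranging $X_s$ as a finite union of cells $C$ on each of which $f_0(s,x) = c(s) + (x-\xi(s))\cdot u$-type behaviour is controlled: on a ball-type cell one wants $f_0$ to be, up to a definable translation $x\mapsto x-\xi(s)$ and a definable multiplicative factor, either constant or "genuinely varying" in the sense that $x\mapsto\psi(f_0(s,x))$ oscillates. The key dichotomy on each cell is then: either the phase is constant (mod $\mathcal{M}_K$) on the fibres of the cell, in which case $\psi(f_0)$ pulls out of the integral and we are reduced to integrating an honest constructible function, which is handled by Theorem~\ref{thrm:Cubides-Leenknegt} (or the classical Denef--Cluckers Theorem~\ref{thm:d-c} in the Skolem setting); or the phase varies, in which case one uses the defining property $\psi_{|\mathcal{M}_K}=1$, $\psi_{|\cO_K}\neq1$ to see that $\int_{\text{ball}}\psi(f_0(s,x))|dx| = 0$ whenever the ball is large enough that $f_0$ is surjective onto a coset of $\cO_K$ modulo $\mathcal{M}_K$ — a Gauss-sum / orthogonality-of-characters vanishing. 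What survives is a boundary contribution on "small" balls where $f_0$ is constant mod $\mathcal{M}_K$, and a careful bookkeeping over the value-group parameters indexing the cells shows the total is again a geometric-type sum in $q_K$, hence constructible, times the relevant character values.

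The main obstacle I expect is precisely the one flagged in the introduction: making the cell decomposition step work \emph{without} definable Skolem functions. In the classical argument the centers $\xi(s)$ of the cells and the change of variables $x\mapsto x-\xi(s)$ are definable functions, and Denef's cell decomposition — which requires Skolem functions — is what produces them. In the general $P$-minimal setting one only has the weaker (clustered) cell decomposition of \cite{cubi-leen, cham-cubi-leen}, where a "cell" may have a \emph{cluster} of centers rather than a single definable center, parametrised by a finite definable set that need not admit a definable section. So the crux will be to show that the character-sum computation above is insensitive to which representative of the cluster one picks — i.e.\ that the oscillatory integral over a clustered cell can still be evaluated, with the vanishing/non-vanishing dichotomy and the resulting constructible function depending only on the cluster data and not on a choice of center. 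For the present theorem, however, one can sidestep this entirely: Theorem~\ref{thm:cluckers-loeser} is stated for $\cL=\cL_\textnormal{ring}$ or $\cL_\textnormal{an}$, both of which \emph{do} admit definable Skolem functions, so we are in the Skolem setting and the classical cell decomposition is available. Thus the proof here is the "easy" case — reprove Cluckers--Loeser using classical cell decomposition plus the orthogonality vanishing — and the genuinely new difficulty (removing Skolem functions, removing the assumption \eqref{eq:cluckers-loeser} on the shape of $f$) is deferred to the later, more general theorems of the paper, where the clustered cell decomposition does the work.
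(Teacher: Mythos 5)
First, a point of comparison: the paper does not prove Theorem \ref{thm:cluckers-loeser} at all --- it is quoted from Cluckers--Loeser \cite{clu-loe-08} as background --- so your argument can only be measured against the known strategy and against the paper's own analogous result, Theorem \ref{thm:Qpvar}. Your outline (reduce to a single summand, induct on $n$ via Fubini, and in one variable combine cell decomposition with orthogonality of $\psi$) is indeed the classical template, and you are right that for $\cL_\textnormal{ring}$ and $\cL_\textnormal{an}$ one is in the Skolem setting, so classical cells with definable centers are available. Note, though, that the paper's own route for Theorem \ref{thm:Qpvar} is different: it uses Fubini to move the character variable to the outside, applies constructible stability (Theorem \ref{thrm:Cubides-Leenknegt}) to the inner integral, and then reduces to integrating $\psi$ over a definable subset of $K$ (Proposition \ref{prop: int over def set}) together with the $\ZZ$-variable result, rather than decomposing adapted to the phase.

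Two steps of your sketch are genuine gaps rather than caveats. First, the vanishing step: knowing that $f_0$ is surjective onto a coset of $\cO_K$ modulo $\mathcal{M}_K$ on a ball does not give $\int_{B}\psi(f_0(s,x))\,|dx|=0$; you need equidistribution of $f_0$ modulo $\mathcal{M}_K$ on that ball (for instance, that $f_0$ maps the ball onto a ball with fibres of equal measure), and producing a decomposition on whose pieces this dichotomy holds is a preparation statement about the definable function $f_0$ --- essentially the Jacobian property --- not a consequence of cell decomposition of the set $X$ alone. The paper flags exactly this point: in \cite{Clu-Gor-Hal-14} the Jacobian property is the key input, and its availability beyond $\cL_\textnormal{ring}$ and $\cL_\textnormal{an}$ is open; in the semialgebraic and subanalytic cases it is available, but it must be invoked, not elided. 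Second, the induction step: after integrating out the last variable you must land again in the special form \eqref{eq:cluckers-loeser} with the \emph{new} constructible coefficients satisfying $\textnormal{Int}(\cdot,K^{n-1})=S$; having ``the right algebraic shape'' is not enough, since the integrability of the coefficients is the whole content of hypothesis \eqref{eq:cluckers-loeser}, and this is precisely the difficulty isolated by the paper's notion of $n$-normal form (Definition \ref{def:normalform}) and by Conjecture \ref{conj:normalform}. In the actual Cluckers--Loeser argument this is handled by working throughout with a class of integrable exponential-constructible functions and proving that one-variable integration preserves that class; your one-line justification does not address it.
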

In a subsequent paper, Cluckers, Gordon and Halupczok \cite{Clu-Gor-Hal-14} managed to remove the condition \eqref{eq:cluckers-loeser} on the form of $f$, thereby showing that for $\Lring$ and $\Lm_{\text{an}}$, algebras of exponential-constructible functions are always base-stable under integration over $K$-variables. (Or to be more precise, their result was even stronger as they also managed to remove the condition on the locus of integrability.)
\\\\
In order to obtain results valid in the non-Skolem setting,
we will need to slightly adapt the definition of exponential-constructible functions.

\subsection{The algebra $\cC_{\textnormal{exp}}^*$ of exponential*-constructible functions}
Before providing the definition of $\cC_{\textnormal{exp}}^*$, let us informally explain why the algebra $\cC_{\textnormal{exp}}$ will need to be adapted to suit our purposes. We will need the following notation and definitions. 

The ball with (valuative) radius $\gamma$ and center $a$ will be denoted as
\[B_\gamma(a):= \{ x \in K \mid \ord(x-a) \geqslant \gamma\}.\]
The set consisting of all balls with a given radius $\gamma$ will be denoted as 
\[\mathbb{B}_{\gamma}:= \{B_\gamma(a) \mid a \in K\}.\]
\begin{definition}
Let $X \subseteq K$. Let $B \subseteq X$ be a ball such that for all balls $B'\subseteq X$, one has that \(B \subseteq B' \Rightarrow B = B'.\) Then we call
$B$ a \emph{maximal ball} of $X$. This property will be denoted as $B \sqsubseteq X$.
\end{definition}

\begin{definition}\label{def:multiball} Let $k \in \N \setminus\{0\}$.
 A set $A \subseteq S \times K$ is called a \emph{multi-ball of order k} if every fiber $A_s$ is a union of $k$ disjoint balls of the same radius. \item A multi-ball of order $k$ is said to be \emph{on $\mathbb{B}_{\gamma}$} if, for each fiber $A_s$, the $k$ balls $B$ contained in $A_s$ all satisfy $B \sqsubseteq A_s$ and $B \in \mathbb{B}_{\gamma}$.
\end{definition}
\noindent We may not always explicitly mention the order of a multi-ball, but even in such cases, the order will always be assumed finite.
\\\\
In the next example we will compute the integral of a very simple exponential-constructible function to illustrate the type of difficulties one may encounter when  definable Skolem functions are not available.

\begin{example} \label{ex: motivation exp*}
Let $k \in \NN \setminus\{0\}$ and let $A \subseteq S \times K$ be a definable multi-ball of order $k$ on $\pball$. Consider the exponential-constructible function $f\colon  A \to \CC : (s,x) \mapsto \psi(x)$. Using the fact that $\psi$ is constant on balls in $\pball$, we get that
\[
\int_{A_s} f(s,x) |dx| = \int_{A_s} \psi(x) |dx| = \sum_{B \sqsubseteq A_s} \psi(B) \cdot \text{Vol}(B) = q_K^{-1} \cdot \sum_{B \sqsubseteq A_s} \psi(B).
\]
If a structure admits definable Skolem functions,  there exist definable sections of $A$, say $f_1, \ldots, f_k\colon  S \to K$, such that
\begin{equation}\label{eq:exp*=exp}
\sum_{B \sqsubseteq A_s} \psi(B) = \sum_{j=1}^k \psi(f_j(s)).
\end{equation}
Hence, the function $s \mapsto \int_{A_s} f(s,x) |dx|$ will be an element of $\cC_\textnormal{exp}(S)$. Note that if a structure does not admit Skolem functions, one cannot always make this type of substitution.
\end{example}

The next example shows that the character of a definable function can always be written as a character sum over a multi-ball:
\begin{example}\label{ex: motivation exp*2}
Let $f\colon  X \to K$ be a definable function. For each $x \in X$, let $A_x$ be the ball $A_x := f(x) + \cM_K$. Then the set $A:=\{(x,y) \in X \times K \mid y \in A_x\}$ is a definable multi-ball of order 1 on $\mathbb{B}_1$. Moreover, for all $x \in X$ one has that 
 \[\psi(f(x)) = \sum_{B \sqsubseteq A_x} \psi(B).\]
\end{example}
%
%

To summarize, Example \ref{ex: motivation exp*} shows that the algebras of exponential constructible functions will need to be extended (if one wants base-stability under integration 
in the $P$-minimal setting), 
 and Example \ref{ex: motivation exp*2} indicates that working with functions of the form $x \mapsto \sum_{B \sqsubseteq A_x} \psi(B)$, rather than $x \mapsto \psi(f(x))$,  yields a very natural generalization  to a wider setting. This motivates us to propose the following definition: 
\begin{definition}\label{def:exp*functions}
Let $(K, \Z, \Lm_2)$ be a $P$-minimal structure and $X$ be a definable set. The algebra $\cC_{\textnormal{exp},\psi}^*(X)$ of \emph{$\cL_2$-exponential*-constructible functions on $X$} is the $\QQ$-algebra generated by functions in $\cC(X)$ and functions of the form $x \mapsto \sum_{B\sqsubseteq A_x} \psi(B)$, where $A\subseteq X\times K$ is a definable multi-ball on $\pball$.
\end{definition}


\begin{remark}\label{rem: motivation exp*}
Note that  in the Skolem setting, the identity \eqref{eq:exp*=exp} holds, hence any exponential*-constructible function is also exponential-constructible for such structures.
\end{remark}

\subsection{Overview of main results}
We are now ready to state the main theorems of this paper. We will always work in a $P$-minimal structure $(K, \Z,\Lm_2)$ unless explicitly stated otherwise. 


\begin{customthm}{A}\label{thm:Zvar} Let $X\subseteq S\times \ZZ^n$ be a definable set. Let $f\in \cC_{\exp}^*(X)$ be such that $\textnormal{Int}(f,\ZZ^n) = S$. Then there exists $g\in \cC_{\exp}^*(S)$ such that for all $s \in S$,
\[
g(s) = \int_{X_s} f(s,\gamma)  |d\gamma|.  
 \]
\end{customthm}

 Let $X\subseteq S\times K^n$ be a definable set and $f\in \cC^*_\textnormal{exp}(X)$ be such that $\textnormal{Int}(f,Y)=S$. The main obstruction to deriving the stability under integration for $f$ is related to integrability conditions on some of the functions used to define $f$. To formalize this let us introduce some terminology. 

\begin{definition}\label{def:normalform}
For $n \geqslant 1$, let $X \subseteq S \times K^n$ be a definable set. We say that a function $f\in \cC^*_\text{exp}(X)$ can be written in \emph{$n$-normal form}, if there exists a definable partition $X = \cup_{w\in W} X_w$ (where $W$ is a finite index set), such that on each $X_w$ the following holds:
\begin{enumerate}
\item There exist $m\geqslant 1$ and functions $f_1, \ldots, f_{m} \in  \cC^*_\text{exp}(X_w)$ such that $f_{|X_w}=\sum_{i=1}^{m} f_i$,
\item Each function $f_i$ can be further expanded as \[f_i(s,x)=h_i(s,x) \sum_{B\sqsubseteq A_{s,x}^i} \psi(B), \qquad \text{where}\] 
\begin{enumerate}
\item $h_i\in\cC(X_w)$ and $\text{Int}(h_i,K^n)=\pi_S(X_w)$,
\item $A^i$ is a definable multi-ball over $X_w$ of order $k_i$ on $\pball$,
\item 
for each $s \in \pi_S(X_w)$, $x \mapsto \sum_{B\sqsubseteq A_{s,x}^i} \psi(B)$ is a measurable function in $x$.
\qedhere
\end{enumerate}
\end{enumerate}
\end{definition}

\begin{customthm}{B}\label{thm:Qpvar}  Let $X \subseteq S \times K^n$ be a definable set and $f\in \cC^*_\text{exp}(X)$. If $f$ can be written in $n$-normal form, then there exists $g\in \cC_{\exp}^*(S)$ such that, for all $s \in S$,
 \[
g(s) = \int_{X_s} f(s,x) |dx|.
 \]
\end{customthm}

Note that our notion of $n$-normal form is similar in nature to the assumptions on the form of $f$ made in Theorem \ref{thm:cluckers-loeser}. 
Now consider the following conjecture.


\begin{conjecture}\label{conj:normalform}
Let $X\subseteq S\times K$ be a definable set and $f\in \cC^*_\textnormal{exp}(X)$ such that $\textnormal{Int}(f,K)=S$. Then $f$ can be written in $1$-normal form.  
\end{conjecture}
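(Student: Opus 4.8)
The plan is to prove Conjecture~\ref{conj:normalform} by a careful analysis of the single $K$-variable $x$ ranging over $X_s \subseteq K$, using the clustered cell decomposition of \cite{cham-cubi-leen} to reduce $f$ to a sum of ``atomic'' pieces whose integrability is transparent. First I would expand a general $f \in \cC^*_\textnormal{exp}(X)$ as a $\QQ$-linear combination of products of the two kinds of generators: constructible functions $q_K^{\beta}$ and $\alpha$ on the one hand, and character sums $x \mapsto \sum_{B \sqsubseteq A_x}\psi(B)$ attached to definable multi-balls $A$ on $\pball$ on the other. Since a product of two such character sums, associated to multi-balls $A^1, A^2$, can be rewritten (by distributing and using that $\psi$ is a character, $\psi(B_1)\psi(B_2) = \psi(B_1 \boxplus B_2)$ after choosing representatives) as a single character sum over a definable multi-ball obtained from $A^1$ and $A^2$, the products collapse and $f$ becomes, on a suitable definable partition of $X$, a sum $\sum_i h_i(s,x)\sum_{B\sqsubseteq A^i_{s,x}}\psi(B)$ with $h_i \in \cC(X)$ and $A^i$ a definable multi-ball on $\pball$. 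This already gives conditions (1) and (2)(a)--(b) of $n$-normal form except that the integrability of the individual pieces $h_i$ and the measurability of the character sums must still be arranged; that is where the real work lies.

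Next I would handle the integrability condition (2)(a), namely $\textnormal{Int}(h_i,K) = \pi_S(X_w)$, together with the measurability condition (2)(c). The key point is that $\textnormal{Int}(f,K) = S$ is a hypothesis about the \emph{sum}, whereas $n$-normal form requires each summand to be separately integrable. To split the burden I would apply the clustered cell decomposition to the set $X \subseteq S \times K$ adapted to all the functions $h_i$, $\beta_i$, the multi-balls $A^i$, and the boundaries of the $X_w$, obtaining a finite definable partition of $X$ into clustered cells; on each cell the functions $h_i$ have a controlled monomial-type behaviour in $x$ (of the form $|x - c(s)|^{\text{something}} q_K^{\text{linear in }\ord(x-c(s))}$ times a bounded unit), and the radii of the balls in $A^i_{s,x}$ are governed by definable functions of $s$ and $\ord(x-c(s))$. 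In this normal form the integral over a single cell of each piece $h_i \cdot \sum_B \psi(B)$ can be computed or at least estimated by a geometric-series argument in the variable $t = \ord(x-c(s))$, exactly as in the classical Denef/Cluckers--Loeser computations: the character sum over a multi-ball on $\pball$ with $k_i$ balls is a bounded quantity (its modulus is at most $k_i$), so the integrability of $h_i \cdot \sum_B \psi(B)$ over the cell is equivalent to the integrability of $|h_i|$ over the cell, which is a purely constructible condition on $s$. One then checks that after refining the partition $X = \cup_w X_w$ so that on each $X_w$ every $\int |h_i|$ is either finite everywhere on $\pi_S(X_w)$ or infinite everywhere, the hypothesis $\textnormal{Int}(f,K)=S$ forces the potentially non-integrable contributions to cancel among themselves; grouping the corresponding terms and re-expanding their sum as new $h_i$'s (absorbing the cancellation) yields pieces that are individually integrable.

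The main obstacle, I expect, is precisely this last cancellation step: making rigorous the passage from ``the sum $\sum_i h_i \sum_B \psi(B)$ is integrable'' to ``the summands can be regrouped into individually integrable pieces in $\cC^*_\textnormal{exp}$.'' In the Skolem setting one could linearize the character sums into genuine $\psi(f_j)$ via \eqref{eq:exp*=exp} and invoke the known results, but in the non-Skolem setting the character sums over multi-balls cannot be split, so one must argue at the level of multi-balls. The technical heart will be a lemma saying roughly: if on a clustered cell a constructible function $h$ is non-integrable in $x$ but $h(s,x)\sum_{B\sqsubseteq A_{s,x}}\psi(B)$ is integrable, then there is a companion term in the expansion of $f$ whose divergence is the exact opposite, and the two combine into a definable multi-ball character sum times an integrable constructible function. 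Establishing this requires tracking the asymptotics of character sums over families of multi-balls whose radii tend to $+\infty$ and showing that the only way divergent $\cC$-coefficients can be tamed is by such pairwise structure — this is the step where the clustered (as opposed to classical) cell decomposition of \cite{cham-cubi-leen} is essential, since it is what controls the families of centers and radii without Skolem functions. Once this lemma is in place, a bookkeeping argument over the finite partition delivers the $1$-normal form, and the conjecture follows; combined with Theorem~\ref{thm:Qpvar} it would upgrade to full base-stability under integration over $K$-variables in one dimension, with the general $n$-variable case to be obtained by induction and Fubini.
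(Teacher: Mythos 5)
There is a fundamental issue you should be aware of: the statement you are attempting is stated in the paper as a \emph{conjecture}, not a theorem. The paper offers no proof of it and explicitly remarks that it is open even in the Skolem setting; it also points out that the analogous step for $\cL_\textnormal{ring}$ and $\cL_\textnormal{an}$ (removing the hypothesis \eqref{eq:cluckers-loeser}) was achieved by Cluckers--Gordon--Halupczok only via the Jacobian Property, whose validity in the $P$-minimal setting is unknown (only a local version is available). Your first paragraph essentially reproduces what the paper does prove, namely Lemma \ref{lem:general-form}: every $f\in\cC^*_{\exp}(X)$ can be written as $\sum_i h_i\sum_{B\sqsubseteq A^i_{s,x}}\psi(B)$ with $h_i\in\cC(X)$ and $A^i$ definable multi-balls on $\pball$. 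But as the remark following that lemma states, the whole difficulty of the conjecture is showing that \emph{some} such representation has all $h_i$ individually integrable; that is exactly the part your proposal does not establish.

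Two concrete problems remain in your sketch. First, your claim that on a cell the integrability of $h_i\cdot\sum_{B\sqsubseteq A^i_{s,x}}\psi(B)$ is \emph{equivalent} to the integrability of $|h_i|$ is false in the direction you need: boundedness of the character sum (modulus at most $k_i$) gives only that integrability of $|h_i|$ implies integrability of the product. The converse fails precisely because of oscillatory cancellation --- Lemma \ref{lemma:cancel} shows $\int_{B_\gamma(a)}\psi(x)\,|dx|=0$ for $\gamma\leqslant 0$, so a product can be integrable (even have vanishing integrals over large regions) while $|h_i|$ diverges; this is the very phenomenon that makes the conjecture hard. Second, your ``technical heart'' lemma --- that non-integrable $h_i$'s must pair off so that their divergences cancel and regroup into a definable multi-ball character sum with an integrable coefficient --- is only described, not proved, and it is essentially a restatement of the conjecture itself. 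No mechanism is offered for controlling the asymptotics of families of character sums over multi-balls without something playing the role of the Jacobian Property, and the clustered cell decomposition alone does not supply it. So the proposal identifies the right obstruction but leaves it open; it does not constitute a proof, and indeed cannot be checked against a proof in the paper, since none exists.
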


Under the above conjecture, Theorems \ref{thm:Zvar} and \ref{thm:Qpvar} imply that the algebras of exponential*-constructible functions are base-stable under integration. As the proof is rather short, we will include it here in the introduction for the reader's convenience. 


\begin{customthm}{C}\label{mainthrm}
Suppose the conjecture holds. Then the algebras of exponential*-constructible functions are base-stable under integration.
\end{customthm}

\begin{proof}
Let $X\subseteq S\times Y$ be a definable set and $f\in \cC^*_\textnormal{exp}(X)$ be such that $\textnormal{Int}(f,Y)=S$. By Fubini, it suffices to consider the cases where $Y=K$ or $Y=\ZZ$. The case $Y = \ZZ$ follows from Theorem \ref{thm:Zvar}. For the case $Y = K$, the conjecture implies that there exists a finite partition $X = \cup_{w\in W} X_w$ such that for each $X_w$, we can write $f_{|X_w}(s,x)=\sum_{i=1}^{m} f_i(s,x)$, where the functions $f_i$ satisfy condition $(2)$ of Definition \ref{def:normalform}. This implies that for all $s\in S_w :=\pi_S(X_w)$, each $f_i(s,\cdot)$ is integrable over $X_s$, hence
\[
 \int_{(X_w)_s} f(s,x) |dx|=  \int_{(X_w)_s} \sum_{i=1}^{m} f_i(s,x)|dx| = \sum_{i=1}^{m} \int_{(X_w)_s} f_i(s,x)|dx|. 
\]
By Theorem \ref{thm:Qpvar} there exists, for each $i\in\{1,\ldots,m\}$, a function $g_i\in \cC^*_\textnormal{exp}(S_w)$  such that $g_i(s) = \int_{(X_w)_s} f_i(s,x) |dx|$. Remark \ref{rmk:extendingfunctions} below allows us to extend the $g_i$ to functions in $\cC^*_{\exp}(S)$. Putting $g_w(s):=\sum_{i=1}^{m} g_i(s)$ and summing over all $w \in W$ completes the proof.
\end{proof}
\begin{remark}\label{rmk:extendingfunctions}
If $U \subseteq X$ are definable sets, then for any $f\in \cC^*_{\exp}(U)$, there exists $f_X \in \cC^*_{\exp}(X)$, such that
\[
f_X(x) = \begin{cases} f(x) &\textnormal{if } x \in U; \\ 0 &\textnormal{if not.}\end{cases}
\]
We will often abuse notation and simply write $f$ rather than $f_X$. This trick will be used when partitioning the domain $X$ of an exponential*-constructible function, as it allows us to extend functions on one of the sets in the partition to functions on $X$.  We may not always explicitly mentioned this. 
\end{remark}

It is worth noting that in \cite{Clu-Gor-Hal-14}, removing the assumption \eqref{eq:cluckers-loeser} on the form of $f$ in the case of $\Lring$ or $\Lm_{\text{an}}$ (by proving a variation on the above conjecture) required a proof that made use of the Jacobian Property (see \cite[Proposition 3.3.5]{Clu-Gor-Hal-14}). At the time of writing, it is still an open question as to whether (a version of) that property holds in the $P$-minimal setting or even in the Skolem setting. Currently only a local version is known (see \cite{kui-lee-13}). Note that Conjecture \ref{conj:normalform} is open in the Skolem setting as well.

%

The remainder of the paper is organized as follows. Since cell decomposition is an important ingredient of the proofs throughout this paper, we present the necessary background on cell decomposition in Section \ref{section: cell decomposition}. Sections \ref{section: auxiliary 1} and \ref{section: auxiliary 2} contain several auxiliary results that will be needed in the later sections. Finally, Theorems \ref{thm:Zvar} and \ref{thm:Qpvar} will be proven in Sections \ref{section: Z} and \ref{section: K}.


\section{Preliminaries on cells and cell decomposition}\label{section: cell decomposition}
\label{cellsection}

In this section we will restate the Clustered Cell Decomposition Theorem from \cite{cham-cubi-leen}, 
followed by a more informal discussion where we will also introduce some further definitions. The results in this section are valid for any $P$-minimal structure $(K, \Gamma_K; \cL_2)$. 
\\\\
We will need the following notation.
For any $n,m \in \N\backslash\{0\}$, define $Q_{n,m}$ to be the set
\[Q_{n,m} = \{x \in K^\times \mid \ord(x) \equiv 0 \textnormal{ mod } n\,\wedge\,\ac_m(x) = 1\}.\]
where $\ac_m$ is the standard angular component map $K \to (\mathcal{O}_K)^{\times}/(\mathcal{M}_K)^m \cup \{0\}$.

\begin{theorem}[Clustered Cell Decomposition]\label{thm:celldecomposition}
Let $X \subseteq S \times K$ be a set definable in a $P$-minimal structure $(K, \Gamma_K; \cL_2)$. Then there exist $n,m \in \N\backslash\{0\}$ and a finite partition of $X$ into definable sets $X_i \subseteq S_i \times K$ of one of the following forms
\begin{itemize}
\item[(i)] Classical cells 
\[X_i= \{ (s,t) \in S_i \times K \mid \alpha_{i}(s) \ \square_1 \ \ord(t-c_{i}(s)) \ \square_2 \ \beta_{i}(s) \wedge t - c_{i}(s) \in \lambda_{i} Q_{n,m} \},\]
where $\alpha_{i}, \beta_{i}$ are definable functions $S_i \to \Gamma_K$, the squares $\square_1,\square_2$ may denote either $<$ or    $\emptyset$ (i.e. `no condition'), $\lambda_{i} \in K$. The center $c_{i}\colon  S_i \to K$ is a definable function (which may not be unique). 
\item[(ii)] Regular clustered cells $X_i=C_{i}^{\Sigma_{i}}$ of order $k_{i}$. \item[] Let $\sigma_1, \ldots, \sigma_{k_{i}}$ be (non-definable) sections of the definable multi-ball $\Sigma_{i} \subseteq S_i \times K$, such that for each $s \in S_i$, the set $\{\sigma_1(s), \ldots, \sigma_{k_{i}}(s)\}$ contains representatives of all $k_{i}$ disjoint balls covering $(\Sigma_{i})_s$. Then $X_{i}$ partitions as
\[X_{i} = C_{i}^{\sigma_1} \cup \ldots \cup C_{i}^{\sigma_{k_{i}}},\]
where each set $C_{i}^{\sigma_l}$ is of the form
\[ C_{i}^{\sigma_l} = \{(s,t) \in S_i \times K \mid \alpha_{i}(s) \ < \ \ord(t-\sigma_l(s)) \ < \ \beta_{i}(s) \wedge t-\sigma_l(s) \in \lambda_{i} Q_{n,m}\}.\]
Here $\alpha_{i}, \beta_{i}$ are definable functions $S_i \to \Gamma_K$, $\lambda_{i} \in K \backslash \{0\}$, and $\ord\, \alpha_{i}(s) \geqslant \ord \,\sigma_l(s)$ for all $s \in S_i$. Finally, we may suppose no section of $\Sigma_i$ is definable.  
\end{itemize}
\end{theorem}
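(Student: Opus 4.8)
This is the Clustered Cell Decomposition of \cite{cham-cubi-leen}, obtained there by refining the weak cell decomposition theorem of \cite{cubi-leen}; we indicate the argument one would carry out. The plan is to bootstrap from that weaker statement. Applying the weak cell decomposition of \cite{cubi-leen} to $X\subseteq S\times K$ produces a finite partition into classical cells of the form (i), together with finitely many cells in which the role of the center is played not by a single definable function but by a definable set $C\subseteq S\times K$ with finite, uniformly bounded fibers: on such a cell each $t\in X_s$ satisfies $\alpha(s)\ \square_1\ \ord(t-c)\ \square_2\ \beta(s)$ and $t-c\in\lambda Q_{n,m}$ for some (after enough refinement, unique) $c\in C_s$. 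The classical cells already have the desired shape, so all the work concerns the non-classical pieces, where the goal is to replace the finite center set $C_s$ by a multi-ball.

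Fix such a piece. First refine the base so that $|C_s|$ is a constant $N$ and $\alpha$, $\beta$ and the threshold function below are definable. The key point is that two centers $c,c'\in C_s$ may be identified once they are valuatively so close that the annular and $Q_{n,m}$-conditions around $c$ literally coincide with those around $c'$ --- concretely, once $\ord(c-c')$ exceeds a bound $\tau(s)$ determined by $\beta(s)$ and $m$. By the ultrametric inequality the relation ``$\ord(c-c')\geqslant\tau(s)$'' is an equivalence relation, so it partitions $C_s$ into clusters, each equal to the intersection of $C_s$ with a ball of radius $\tau(s)$; after partitioning $S$ according to the number $k$ of clusters (a definable function of $s$, bounded by $N$), these are exactly the maximal balls of a definable multi-ball $\Sigma\subseteq S\times K$ over the piece. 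Choosing (non-definable) representatives $\sigma_1(s),\ldots,\sigma_k(s)$, one per ball, the piece decomposes as $C^{\sigma_1}\cup\ldots\cup C^{\sigma_k}$, which is the form (ii); the choice of $\tau(s)$ is precisely what makes each $C^{\sigma_l}$ independent of the representative chosen in its ball. If $\Sigma$ admits a definable section, the piece is a finite union of classical cells and goes into case (i); hence in (ii) one may assume $\Sigma$ has no definable section. Carrying this out over all pieces and then replacing the various $n$ and $m$ by common multiples --- refining along the ensuing congruence and angular-component conditions, which again carve out cells of the permitted shapes --- yields the single pair $(n,m)$ of the statement.

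The step I expect to be the main obstacle is fixing the threshold $\tau(s)$ and ensuring the procedure converges. Identifying $c$ with $c'$ is legitimate only if, throughout the annulus, both $\ord(t-c')$ and the angular component of $t-c'$ agree with those of $t-c$; producing a $\tau(s)$ that achieves this uniformly in $t$, correctly for all four combinations of $\square_1,\square_2\in\{<,\emptyset\}$ --- in particular in the degenerate cases where $\alpha$ or $\beta$ is absent, in which genuine positive-radius clustering need not be possible and those pieces must be treated separately --- is where the real work lies, and it typically forces enlarging $m$ and re-partitioning; one must then check that this cascade of refinements terminates. The weak cell decomposition of \cite{cubi-leen}, whose own proof supplies the device allowing one to avoid definable Skolem functions, is of course the substantial input being used. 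A final point requiring care is keeping the dichotomy ``$\Sigma$ has a definable section'' versus ``$\Sigma$ has none'' clean, so that the cells of type (ii) genuinely record a non-Skolem phenomenon with no definable selection.
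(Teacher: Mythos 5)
There is an important mismatch of expectations here: the paper you are working from does not prove Theorem~\ref{thm:celldecomposition} at all. Section~\ref{section: cell decomposition} explicitly \emph{restates} the Clustered Cell Decomposition Theorem from \cite{cham-cubi-leen}, where it was obtained as an enhancement of the weak cell decomposition of \cite{cubi-leen}; the only cell-decomposition statement actually proved in this paper is the strengthening in Theorem~\ref{thm:symmetrictrees} (constant number of branching heights and fixed $d$-signature), which takes Theorem~\ref{thm:celldecomposition} as input. So your identification of the provenance and of the broad strategy (start from the weak cell decomposition, replace finite sets of centers by equivalence classes forming a definable multi-ball $\Sigma$, push pieces with a definable section back into the classical case) is consistent with how the result is obtained in \cite{cham-cubi-leen}, and at the citation level it matches what this paper does.

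As a self-contained proof, however, your text is a plan rather than an argument, and the steps you defer are precisely the substance of \cite{cham-cubi-leen}: producing a uniform, definable clustering threshold valid for all four combinations of $\square_1,\square_2$ (including the degenerate unbounded cases, where no positive-radius equivalence class of centers exists and the center can in fact be chosen definably); showing that the resulting equivalence classes are balls of a common radius uniformly in $s$, so that $\Sigma$ really is a multi-ball of constant order $k_i$ after finitely many definable partitions of the base; establishing the regularity of the clustered cells (the uniform tree structure of the fibers of $\Sigma$, which is what Theorem~\ref{thm:symmetrictrees} later exploits) and the normalization $\ord\,\alpha_i(s)\geqslant\ord\,\sigma_l(s)$; and verifying that the cascade of refinements of $n$ and $m$ terminates. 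You name these obstacles yourself but do not resolve any of them, so the proposal cannot be checked for correctness beyond its outline. If your goal is to use the theorem as this paper does, a citation to \cite{cham-cubi-leen} suffices; if your goal is to reprove it, the gap lies exactly in the items above.
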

Readers will probably be most familiar with the \emph{classical} cell decomposition theorem for $p$-adic semi-algebraic sets as it was originally proven by  Denef \cite{denef-86} (and then extended to the sub-analytic setting by  Cluckers \cite{clu-2003}). This type of cell is what we will refer to as \emph{classical} cells (see part (i) of Theorem \ref{thm:celldecomposition}). However, as stated before, in structures that do not admit definable Skolem functions, there are definable sets that cannot fully be partitioned into definable sets of this form by a result of Mourgues \cite{mou-09}.

Note that the issue here is definability, rather than geometry: we extend the notion of cells to include clustered cells, but geometrically these sets have a structure that is identical to that of (finite unions of) classical cells. Let us take a moment to explain some terms. We will keep this discussion informal, for technical details we refer to \cite{cham-cubi-leen}.
We use the tree representation of valued fields to visualize the structure of cells. Using this representation, we can visualize the fibers (for fixed values of $s \in S$) of a classical cell.  The different tree structures that can occur depend mainly on whether $\lambda_i = 0$ (0-cell), $\lambda_i \neq 0$ (1-cell), and on the value of $\square_2$.
\\\\
\begin{minipage}{0.5 \linewidth} A \emph{classical cell} (as in (i)) is often denoted as $C^{c_i}$. Here $c_i$ refers to the center, and $C$ refers to the rest of the description of the cell (which we sometimes refer to as a \emph{cell condition}.) For each value $s$ of the parameter set $S$, we denote the corresponding fiber as $C^{c_i(s)}:= (C^{c_i})_s$. It is one such fiber that is depicted on the right (for three different cases).
\\\\
 In these pictures, the grey triangles represent balls $C^{c_i(s), \gamma}:=\{t\in C^{c_i(s)} \mid  \ord (t-c_i(s)) = \gamma\}$. Such balls are what we will refer to as \emph{leaves}, and cells can be seen as a union of such leaves, for values of $\gamma$ as restricted by the description of the cell. We call $\gamma$ the \emph{height} of the leaf.
\end{minipage}
\begin{minipage}{0.1 \textwidth}
\end{minipage}
\begin{minipage}{0.4 \textwidth}
\begin{tikzpicture}[scale = 0.8]
\node[{above}] at (0,5.3){\footnotesize $c_i(s)$};
\draw [fill] (0,5.3) circle [radius = 1.25pt];

\node [{below}] at (0,-0.3){\footnotesize 0-cell};

\draw (2.25,5.3) -- (3,4.25) -- (3.75,5.3) -- cycle;
\draw (3,0) -- (3,4.25);
\draw [dotted, line width=0.75pt] (3,-0.3) -- (3,0);
\draw (3,3.5) -- (2.25,4.25);
\draw (3,2.75) -- (2.25, 3.5);
\draw (3,2) -- (2.25,2.75);
\draw (3,1.25) -- (2.25,2);
\draw (3,0.5) -- (2.25,1.25);
\path [fill = gray] (2.25,4.25) -- (1.6,4.6) -- (1.9,4.9) -- cycle;
\path [fill = gray] (2.25,3.5) -- (1.6,3.85) -- (1.9,4.15) -- cycle;
\path [fill = gray] (2.25,2.75) -- (1.6,3.1) -- (1.9,3.4) -- cycle;
\path [fill = gray] (2.25,2) -- (1.6,2.35) -- (1.9,2.65) -- cycle;
\path [fill = gray] (2.25,1.25) -- (1.6,1.6) -- (1.9,1.9) -- cycle;
\draw [dashed] (1.5,4.05) -- (4,4.05);
\draw [dashed] (1.5,0.25) -- (4,0.25);
\node[{right}] at (4,4.05){\footnotesize$\beta(s)$};
\node[{right}] at (4,0.25){\footnotesize$\alpha(s)$};
\node[{above}] at (3,5.3){\footnotesize$c_i(s)$};
\draw [fill] (3,5.3) circle [radius = 1.25pt];
\draw [decorate,decoration={brace,amplitude=4pt,mirror,raise=2pt},yshift=0pt] (3,0.5) -- (3,1.25) node [black,midway,xshift=9pt] {\tiny$n$};
\draw [decorate,decoration={brace,amplitude=4pt,mirror,raise=1.5pt},yshift=0pt, rotate=45] (2.47,-1.68) -- (2.47,-0.71) node [black,midway,xshift=5pt,yshift=6.5pt] {\tiny$m$};
\node [{above}] at (4.5,5.8){\footnotesize equivalence class $B_{\rho}(c_i(s))$};
\path[->] (4.5,5.9)  edge [bend left=50] (3.7,5.1);
\node [{below}] at (0.9,3.25){\footnotesize leaves};
\path[->] (0.9,3.17)  edge [bend left=50] (1.7,3.25);

\node [{below}] at (3,-0.3){\footnotesize 1-cell};
\node [{below}] at (3,-0.65){\footnotesize$\square_1 = \square_2 = <$};

\draw (7,0) -- (7,4.8);
\draw [dotted, line width=0.75pt] (7,-0.3) -- (7,0);
\draw [dotted, line width=0.75pt] (7,4.8) -- (7,5.3);
\draw (7,4.25) -- (6.25,5);
\draw (7,3.5) -- (6.25,4.25);
\draw (7,2.75) -- (6.25, 3.5);
\draw (7,2) -- (6.25,2.75);
\draw (7,1.25) -- (6.25,2);
\draw (7,0.5) -- (6.25,1.25);
\path [fill = gray] (6.25,5) -- (5.6,5.35) -- (5.9,5.65) -- cycle;
\path [fill = gray] (6.25,4.25) -- (5.6,4.6) -- (5.9,4.9) -- cycle;
\path [fill = gray] (6.25,3.5) -- (5.6,3.85) -- (5.9,4.15) -- cycle;
\path [fill = gray] (6.25,2.75) -- (5.6,3.1) -- (5.9,3.4) -- cycle;
\path [fill = gray] (6.25,2) -- (5.6,2.35) -- (5.9,2.65) -- cycle;
\path [fill = gray] (6.25,1.25) -- (5.6,1.6) -- (5.9,1.9) -- cycle;
\draw [dashed] (5.5,0.25) -- (8,0.25);
\node[{right}] at (8,0.25){\footnotesize$\alpha(s)$};
\node[{above}] at (7,5.3){\footnotesize$c_i(s)$};
\draw [fill] (7,5.3) circle [radius = 1.25pt];
\node [{below}] at (4.9,2.5){\footnotesize$C^{c_i(s),\gamma}$};
\path[->] (4.9
,2.42)  edge [bend left=50] (5.7,2.5);
\draw [fill] (7,1.25) circle [radius = 1.25pt];
\node[{right}] at (7,1.25){\footnotesize$\gamma$};

\node [{below}] at (7,-0.3){\footnotesize 1-cell};
\node [{below}] at (7,-0.65){\footnotesize$\square_1 = <, \square_2 = \emptyset$};
\end{tikzpicture}
\end{minipage}
\\\\
The function $c_i(s)$ is what we call the \emph{center} of a cell. 
When $\square_2 = \ <$, such a center is not unique, and hence we can define (for every $s$) the \emph{equivalence class} of all elements $a\in K$ such that $C^{c_i(s)}=C^a$. Such an equivalence class is a ball with center $c_i(s)$ which we denote by $B_{\rho}(c_i(s))$.\\
\begin{minipage}{0.45\textwidth}
\begin{tikzpicture}[scale=0.8]
\draw (2.25,5.3) -- (3,4.25) -- (3.75,5.3) -- cycle;
\draw (3,-1) -- (3,4.25);
\draw (3,3.5) -- (2.25,4.25);
\draw (3,2.75) -- (2.25, 3.5);
\draw (3,2) -- (2.25,2.75);
\draw (3,1.25) -- (2.25,2);
\draw (3,0.5) -- (2.25,1.25);
\path [fill = gray] (2.25,4.25) -- (1.6,4.6) -- (1.9,4.9) -- cycle;
\path [fill = gray] (2.25,3.5) -- (1.6,3.85) -- (1.9,4.15) -- cycle;
\path [fill = gray] (2.25,2.75) -- (1.6,3.1) -- (1.9,3.4) -- cycle;
\path [fill = gray] (2.25,2) -- (1.6,2.35) -- (1.9,2.65) -- cycle;
\path [fill = gray] (2.25,1.25) -- (1.6,1.6) -- (1.9,1.9) -- cycle;
\draw (0.25,5.3) -- (1,4.25) -- (1.75,5.3) -- cycle;
\draw (1,0) -- (1,4.25);
\draw (1,3.5) -- (0.25,4.25);
\draw (1,2.75) -- (0.25, 3.5);
\draw (1,2) -- (0.25,2.75);
\draw (1,1.25) -- (0.25,2);
\draw (1,0.5) -- (0.25,1.25);
\path [fill = gray] (0.25,4.25) -- (-0.4,4.6) -- (-0.1,4.9) -- cycle;
\path [fill = gray] (0.25,3.5) -- (-0.4,3.85) -- (-0.1,4.15) -- cycle;
\path [fill = gray] (0.25,2.75) -- (-0.4,3.1) -- (-0.1,3.4) -- cycle;
\path [fill = gray] (0.25,2) -- (-0.4,2.35) -- (-0.1,2.65) -- cycle;
\path [fill = gray] (0.25,1.25) -- (-0.4,1.6) -- (-0.1,1.9) -- cycle;
\draw (4.25,5.3) -- (5,4.25) -- (5.75,5.3) -- cycle;
\draw (5,0) -- (5,4.25);
\draw (5,3.5) -- (4.25,4.25);
\draw (5,2.75) -- (4.25, 3.5);
\draw (5,2) -- (4.25,2.75);
\draw (5,1.25) -- (4.25,2);
\draw (5,0.5) -- (4.25,1.25);
\path [fill = gray] (4.25,4.25) -- (3.6,4.6) -- (3.9,4.9) -- cycle;
\path [fill = gray] (4.25,3.5) -- (3.6,3.85) -- (3.9,4.15) -- cycle;
\path [fill = gray] (4.25,2.75) -- (3.6,3.1) -- (3.9,3.4) -- cycle;
\path [fill = gray] (4.25,2) -- (3.6,2.35) -- (3.9,2.65) -- cycle;
\path [fill = gray] (4.25,1.25) -- (3.6,1.6) -- (3.9,1.9) -- cycle;
\draw [dashed, thick] (-0.5,4.05) -- (5.7,4.05);
\draw [dashed, thick] (-0.5,0.25) -- (5.7,0.25);
\node[{above}] at (1,5.3){\footnotesize$\sigma_1(s)$};
\draw [fill] (1,5.3) circle [radius = 1.25pt];
\node[{above}] at (3,5.3){\footnotesize$\sigma_2(s)$};
\draw [fill] (3,5.3) circle [radius = 1.25pt];
\node[{above}] at (5,5.3){\footnotesize$\sigma_3(s)$};
\draw [fill] (5,5.3) circle [radius = 1.25pt];
\draw (1,0) -- (3,-0.7);
\draw (5,0) -- (3,-0.7);
\node[{below}] at (3,-1){\footnotesize regular clustered cell};
\node[{right}] at (5.7,0.25){\footnotesize $\alpha(s)$};
\node[{right}] at (5.7,4.05){\footnotesize $\beta(s)$};
\draw [decorate,decoration={brace,amplitude=4pt,mirror,raise=2pt},yshift=0pt] (3,0.5) -- (3,1.25) node [black,midway,xshift=9pt] {\tiny$n$};
\draw [decorate,decoration={brace,amplitude=4pt,mirror,raise=1.5pt},yshift=0pt, rotate=45] (2.47,-1.68) -- (2.47,-0.71) node [black,midway,xshift=5pt,yshift=6.5pt] {\tiny$m$};
\draw [fill] (3,-0.7) circle [radius = 1.5pt];
\draw [loosely dotted, thick] (-0.5,-0.7) -- (4.1,-0.7);
\node[{right}] at (4.2,-0.7){\footnotesize branching height};
\end{tikzpicture}
\end{minipage}
\begin{minipage}{0.55 \linewidth}
In \emph{clustered cells}, we keep the same geometric notion of cells, but we will now make use of these equivalence classes to define centers, rather than using definable functions (which may not always exist). \\\\ The multi-ball $\Sigma_i$ from part (ii) of the above definition is a set whose fibers consist of $k_i$ balls, each corresponding to an equivalence class of centers. Hence, when we replace the center of a cell by such a set $\Sigma_i$, we obtain a set (denoted as $C^{\Sigma_i}$) that geometrically has the structure of a union of $k_i$ (disjoint) classical cells that only differ in their description by the use of different centers, as shown on the left. 
\end{minipage}
\\\\\\
Note that the sections $\sigma_j$ are picking representatives $\sigma_j(s)$ from each equivalence class, yet are not necessarily definable (as it may be that no definable section exists.) We will still continue using the notation introduced above, but write $\sigma_i$ rather than $c_i$ to stress the fact that representatives $\sigma_i$ need not be definable.\\\\
In order to work with such clustered cells, one sometimes needs to take the structure of the set $\Sigma_i$ into consideration. For example, for every fiber $(\Sigma_i)_s$, the \emph{branching heights} are the values $\gamma \in \Gamma_K$ for which there exist representatives $\sigma_1(s), \sigma_2(s)$ of different equivalence classes, such that $\ord(\sigma_1(s) - \sigma_2(s)) = \gamma$, as shown on the picture above. If a cell is \emph{large} (i.e., each cell fiber has leaves at more than one height), then all branching heights are smaller than $\alpha(s)$, for every $s \in S$. \\\\
Visual representations like the pictures shown above are a representation of one of the fibers of a cell, for a fixed value of $s$. However, these pictures allow us to deduce the general structure of the cell as well, because of the condition of \emph{regularity}. We will informally explain what consequence this condition has for the general structure of the fibers of $\Sigma_i$.\\\\
Suppose that $\Sigma_i$ is a multi-ball of order $k$. Given representatives $\sigma_1(s),\ldots, \sigma_k(s)$ of each equivalence class in $(\Sigma_i)_s$, one can look at the finite tree $T_s$ they induce, which will result in a picture like in the figure below. The regularity of the cell establishes that for all $s\in S$, such finite trees are all ``isomorphic'', meaning that there exists an order-preserving bijection between each two trees $T_s$ and $T_{s'}$. 
\\
\begin{minipage}{0.5 \textwidth}

The following notion is important for studying the structure of the fibers of $\Sigma_i$.
If $\gamma_1 > \gamma_2 > \ldots > \gamma_d$ are the $d$ highest branching heights, then we can assign a $d$-\emph{signature} $(k_1, \ldots, k_d)$ to each point of $(\Sigma_i)_s$, as illustrated by the picture on the right.  In the tree shown here, $\sigma_1$ has 3-signature $(3,1,2)$ and $\sigma_2$ has 3-signature $(2,3,2)$. 
 \end{minipage}
\begin{minipage}{0.15 \textwidth}
\end{minipage}
\begin{minipage}{0.35 \textwidth}
\begin{tikzpicture}[yscale = 0.75]
\draw [dashed] (0.5,1) -- (6.2,1);
\draw [dashed] (0.5,3) -- (6.2,3);
\draw [dashed] (0.5,4.5) -- (6.2,4.5);
\node[{left}] at (0.5,1){\footnotesize$\gamma_3$};
\node[{left}] at (0.5,3){\footnotesize$\gamma_2$};
\node[{left}] at (0.5,4.5){\footnotesize$\gamma_1$};

\draw [line width=1.3pt] (3,0) -- (3,1);
\draw [line width=1.3pt] (3,1) -- (1,4.5);
\draw [line width=1.3pt] (1,4.5) -- (1.5,5.3);
\draw (1,4.5) -- (1,5.3);
\draw (1,4.5) -- (0.5,5.3);

\draw [line width=1.3pt] (3,1) -- (4.5,3);
\draw [line width=1.3pt] (4.5,3) -- (4.5,4.5);
\draw [line width=1.3pt] (4.5,4.5) -- (4.2,5.3);
\draw (4.5,4.5) -- (4.8,5.3);
\draw (4.5,3) -- (3,5.3);
\draw (4.5,3) -- (5.8,4.5);
\draw (5.8,4.5) -- (5.4,5.3);
\draw (5.8,4.5) -- (5.65,5.3);
\draw (5.8,4.5) -- (5.95,5.3);
\draw (5.8,4.5) -- (6.2,5.3);

\node[{above}] at (1.5,5.3){\footnotesize$\sigma_1$};
\node[{above}] at (4.2,5.3){\footnotesize$\sigma_2$};
\end{tikzpicture}
\end{minipage}\\
\\\\
\noindent For more details we refer to \cite{cham-cubi-leen}. In particular, the following definitions may be of relevance to the contents of this paper: Definitions 1.4 (leaf), 4.1 (equivalence class), 4.3 (branching height), 4.4 (signature), 5.3 (large/small cells), 6.2 (regular clustered cell).
\\\\
For the current paper, we will need to slightly strengthen this cell decomposition result, adding an extra condition on the structure of the set of centers $\Sigma$. The exact result is formulated in the following theorem, that can also be found in \cite[Lemma 3.2]{cham-cubi-leen-note}

\begin{theorem}\label{thm:symmetrictrees}
Let $X \subseteq S \times K$ be a definable set in a $P$-minimal structure $(K, \Gamma_K; \cL_2)$. Then $X$ can be partitioned as a finite union of classical cells and regular clustered cells. Each regular clustered cell $C^\Sigma$ of order $k$ satisfies the following two properties.
\begin{itemize}
\item[(i)] The set $\Sigma$ does not admit any definable sections.
\item[(ii)] There exists $d \in \NN$ such that for every $s \in S$, $\Sigma_s$ has exactly $d$ branching heights and when $d \geq 1$, there exists $k_1,\ldots, k_d \in \NN$, such that all elements of each $\Sigma_s$ have $d$-signature $(k_1, \ldots, k_d)$.
\end{itemize}
\end{theorem}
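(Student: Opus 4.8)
The plan is to derive Theorem \ref{thm:symmetrictrees} from the Clustered Cell Decomposition Theorem \ref{thm:celldecomposition} together with the regularity statement discussed informally above (the isomorphism of the finite trees $T_s$), by refining the given partition through a finite recursion on the parameter set $S$. First I would apply Theorem \ref{thm:celldecomposition} to obtain a partition of $X$ into classical cells and regular clustered cells $C^\Sigma$ with $\Sigma$ admitting no definable sections, which already gives property (i) for free. The content of the theorem is thus entirely in property (ii): making the number $d$ of branching heights and the $d$-signature $(k_1,\dots,k_d)$ \emph{uniform} across all $s\in S$ rather than merely fiberwise finite.

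For a fixed regular clustered cell $C^\Sigma$ of order $k$, the key observation is that the number of branching heights of $\Sigma_s$ and the multiset of $d$-signatures occurring among the $k$ equivalence classes in $\Sigma_s$ are governed by the combinatorial type of the finite tree $T_s$ spanned by representatives $\sigma_1(s),\dots,\sigma_k(s)$. Since $k$ is fixed and finite, there are only finitely many possible isomorphism types of such trees (a tree on $k$ marked leaves with at most $k-1$ internal branching nodes). For each isomorphism type $\tau$, the set $S_\tau := \{ s \in S \mid T_s \cong \tau \}$ is definable: whether $\ord(\sigma_i(s)-\sigma_j(s))$ equals, exceeds, or is exceeded by $\ord(\sigma_{i'}(s)-\sigma_{j'}(s))$ can be expressed by a definable condition on $s$ even though the individual sections $\sigma_i$ are not definable, because these comparisons only involve the \emph{balls} of $\Sigma_s$, which are coded definably by $\Sigma$. (Concretely, for two disjoint balls $B, B'$ in $\Sigma_s$, the valuative distance between any representatives is a well-defined definable function of the pair of balls, hence a definable function of $s$ once one also partitions according to which ball is which — and since the order is $k$, finitely many cases suffice.) Partitioning $S$ into the finitely many pieces $S_\tau$ and restricting $C^\Sigma$ accordingly, we may assume $T_s$ has a constant isomorphism type; but by the regularity of the cell this was already the case, so what the partition really buys us is that we can now \emph{read off} a fixed $d$ and, after possibly a further finite partition according to which of the $k$ leaves lies in which branch, a fixed signature for every point.

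The one subtlety to handle carefully is that the $d$-signature is assigned to individual \emph{points} (equivalence classes) of $\Sigma_s$, and a priori different points could carry different signatures within the same fiber; property (ii) demands that \emph{all} elements of $\Sigma_s$ share one signature $(k_1,\dots,k_d)$. This is arranged by splitting the clustered cell $C^\Sigma$ of order $k$ into sub-clustered-cells: for a fixed tree type $\tau$ with branching heights $\gamma_1 > \dots > \gamma_d$, group the $k$ leaves of $\tau$ according to their $d$-signature, obtaining a partition of the $k$ balls of each $\Sigma_s$ into blocks, where all balls in a block have the same signature. Each block has constant size $k'$ across $s$ (again by regularity/constancy of $\tau$), and restricting $C^\Sigma$ to a block yields a regular clustered cell of order $k'$ all of whose points have the same $d$-signature. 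Taking the union of these sub-cells over all blocks and all tree types $\tau$, over all original clustered cells, and throwing in the (unchanged) classical cells, produces the desired partition. One should check that each sub-cell is still a \emph{regular} clustered cell in the sense of \cite{cham-cubi-leen} — i.e., that restricting the center-set $\Sigma$ to a definably-identified sub-collection of its balls preserves regularity and the constraint $\ord\,\alpha_i(s) \geq \ord\,\sigma_l(s)$ — which is routine since we are only discarding balls, not altering the cell condition $\alpha_i, \beta_i, \lambda_i, n, m$.

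The main obstacle I anticipate is precisely the definability bookkeeping in the middle step: expressing "$T_s$ has isomorphism type $\tau$" and "leaf $\ell$ of $\tau$ corresponds to this particular ball of $\Sigma_s$" by genuinely definable conditions on $s$, given that we are forbidden from using the sections $\sigma_i$ themselves. The resolution is to phrase everything in terms of the definable family of balls comprising $\Sigma_s$ and the definable "valuative distance between two disjoint balls" function, but making this fully rigorous requires being careful about the fact that $\Sigma$ as a definable set gives us the balls only as a set (not enumerated), so the partition according to "which ball is which leaf" must be done by a symmetric-function / orbit argument, or by passing to the definable quotient — this is the kind of argument that is carried out in detail in \cite[Lemma 3.2]{cham-cubi-leen-note}, and I would simply invoke and lightly reprove it here.
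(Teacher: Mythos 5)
Your proposal is correct and follows essentially the same route as the paper: obtain property (i) from Theorem \ref{thm:celldecomposition}, use the regularity of the clustered cell to fix the number $d$ of branching heights after a finite definable partition of $S$, and then split the center set $\Sigma$ according to the signatures of its points, each piece giving a regular clustered cell of constant tree type. One remark: your closing appeal to \cite[Lemma 3.2]{cham-cubi-leen-note} is circular (that lemma \emph{is} the statement being proved), but it is also unnecessary, since the branching heights of $\Sigma_s$ are uniformly definable in $s$, so each signature entry $k_i(c)$ is a definable function on $\Sigma$ and the sets $\Sigma_{(l_1,\ldots,l_i)}=\{(s,c)\in\Sigma \mid k_1(c)=l_1,\ldots,k_i(c)=l_i\}$ are definable outright -- no orbit, quotient, or leaf-matching argument is needed, and this direct inductive partition of $\Sigma$ by signature values is exactly how the paper's proof proceeds.
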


\begin{proof}
Because of Theorem \ref{thm:celldecomposition} we can assume that $X$ partitions into classical cells and regular clustered cells $C^\Sigma$ of order $k$, that already satisfy condition (i). For condition (ii) we can assume that $X = C^\Sigma$. Since the tree structures of all the fibers of $\Sigma$ are isomorphic, we can, after a finite partitioning of $S$, assume that for each $s, s' \in S$, $\Sigma_s$ and $\Sigma_{s'}$ essentially look the same. What we mean by this is that the number of branching heights is the same and if we were to pick representatives for equivalence classes of $(\Sigma_i)_s$ and $(\Sigma_i)_{s'}$, then there would exist a bijection between these sets of representatives that preserves all $d$-signatures. This already establishes the existence of $d$ from condition (ii).

Now if $k=1$, then condition (ii) is automatically satisfied, so we may assume that $k>1$, which in turn implies that $d\geqslant 1$. For  each $l \in \N$, write $(k_1(c), \ldots, k_l(c))$ for the $l$-signature of $c \in \Sigma_s$.
If $C^\Sigma$ does not yet satisfy condition (ii), then there exists some $s \in S$ for which the $d$-signature is not fixed on $\Sigma_s$, hence for every $s\in S$, $\Sigma_s$ will contain elements with at least two different signatures. In this case we will give an explicit decomposition of $C^\Sigma$ into regular clustered cells that satisfy both conditions.

First, partition $\Sigma$ in sets $\Sigma_{(l_1)}$, for $l_1 \in \{1, \ldots, q_K\}$, which are defined as
\[\Sigma_{(l_1)}:= \{(s,c) \in \Sigma \mid k_1(c) = l_1\}.\]
Note that some of these sets may be empty. This induces a partition of $C^{\Sigma}$ into the union of the regular clustered cells $C^{\Sigma_{(l_1)}}$. (It should be clear that the uniformity of the tree structure is preserved. Further, since the tree of $(\Sigma_{(l_1)})_s$ is a pruning of the original tree of $\Sigma$, and no new branching heights are introduced, we still have that all branching happens below $\alpha(s)$.)

This process can now be repeated inductively. If we fix a clustered cell $C^{\Sigma_{(l_1)}}$, the 1-signature is fixed. This clustered cell can now be partitioned into cells $C^{\Sigma_{(l_1,l_2)}}$, where $C^{\Sigma_{(l_1,l_2)}}$ is defined as
\[\Sigma_{(l_1,l_2)}:= \{(s,c) \in \Sigma_{(l_1)} \mid k_2(c) = l_2\},\]
again for $l_2 \in \{1, \ldots, q_K\}$. We can repeat the process until we have a partition of $C^\Sigma$ into regular clustered cells $C^{\Sigma_{(l_1, \ldots, l_d)}}$ that satisfy conditions (i) and (ii).
\end{proof}

For a regular clustered cell $C^\Sigma$ satisfying the two conditions from the previous theorem, the tuple $(k_1, \ldots, k_d)$ will be called the \emph{tree type} of $C^\Sigma$.
\\\\
The results mentioned so far in this section are about cells in $S \times K$, where the last variable is of the $K$-sort. As we are working in two-sorted structures, we will also occasionally need to work with cells where the last variable is of the value-group sort. Such cells will be called $\Gamma$-cells. We recall the following result from \cite{cubi-leen}:
\begin{theorem}[\cite{cubi-leen}, Proposition 2.4]\label{thm:recti}
Let $f: X \subseteq S \times \Gamma_K \to \Gamma_K$ be definable in a $P$-minimal structure $(K, \Gamma_K; \Lm_2)$. There exists a finite partition of $X$ in $\Gamma$-cells $C$ of the form
\[C = \left\{ (s,\gamma) \in D \times \Gamma_K \left| \begin{array}{l} \alpha(s) \ \square_1 \ \gamma \ \square_2 \ \beta(s),\\ \gamma \equiv k \mod n \end{array}\right\}\right.,\]
where $D$ is a definable subset of $S$, $\alpha, \beta$ are definable functions $D \to \Gamma_K$, $k,n, \in \N$ and the squares $\square_i$ may denote $<$ or no condition. On each such cell, the function $f_{|C}$ has the form
\[f_{|C}(s,\gamma) = a\left( \frac{\gamma -k}{n}\right) + \delta(s),\]
where $a \in \Z$ and $\delta$ is a definable function $D \to \Gamma_K$. 
\end{theorem}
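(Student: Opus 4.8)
The plan is to transfer the statement to pure Presburger arithmetic and then invoke classical Presburger cell decomposition, uniformly over the parameter set $S$. The key structural input — and the only place where $P$-minimality is genuinely used — is that the value-group sort of a $2$-sorted $P$-minimal structure carries no definable structure beyond that of Presburger arithmetic. Concretely, I would first record the following: every $\Lm_2$-definable set $Z\subseteq S\times\Gamma_K^{m}$ can be written as $Z=\{(s,\bar\gamma): \theta(\bar\gamma;\bar h(s))\}$ for a suitable Presburger formula $\theta$ (in the language $(+,<,0,1,\{\equiv_n\}_n)$) and a definable function $\bar h\colon S\to\Gamma_K^{N}$ (this is part of the basic theory of $2$-sorted $P$-minimality; see the discussion in \cite[Section 2]{cubi-leen}). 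Applying this simultaneously to the domain $X$ and to the graph of $f$ (which is a genuine definable subset of $S\times\Gamma_K\times\Gamma_K$, as $f$ takes no value $\infty$), I obtain a single tuple of definable functions $\bar h\colon S\to\Gamma_K^{N}$ and a Presburger formula $\theta(\gamma,\delta;\bar t)$ such that $(s,\gamma,\delta)$ lies in the graph of $f$ if and only if $\theta(\gamma,\delta;\bar h(s))$ holds.

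Next I would apply the cell decomposition theorem for Presburger arithmetic to the parametrized family $\theta(\gamma,\delta;\bar t)$, viewed as a family (indexed by the parameter $\bar t$) of graphs of partial functions $\gamma\mapsto\delta$. Presburger cell decomposition is uniform in $\bar t$, so it produces finitely many cell shapes in $(\gamma;\bar t)$, each of the form $\{\gamma:\mu(\bar t)\ \square_1\ \gamma\ \square_2\ \nu(\bar t)\ \wedge\ \gamma\equiv k\bmod n\}$ with $\mu,\nu$ Presburger terms and $k,n\in\N$, on each of which the function is given by $\delta=a\cdot\frac{\gamma-k}{n}+e(\bar t)$ for some integer slope $a\in\Z$ and Presburger term $e$; crucially, the number of cells and the data $k,n,a$ do not depend on $\bar t$. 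Substituting $\bar t=\bar h(s)$ turns each Presburger term in $\bar t$ into a definable function of $s$, so $\alpha(s):=\mu(\bar h(s))$, $\beta(s):=\nu(\bar h(s))$ and $\delta(s):=e(\bar h(s))$ are definable maps $D\to\Gamma_K$, where $D\subseteq S$ is the (definable) set of $s$ for which the corresponding cell is nonempty. A final bookkeeping step organizes the resulting finitely many $\Gamma$-cells into a partition of $X$, on each piece of which $f$ has exactly the claimed form $f_{|C}(s,\gamma)=a\bigl(\frac{\gamma-k}{n}\bigr)+\delta(s)$.

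The main obstacle is the first step: establishing that the induced structure on $\Gamma_K$ in a $2$-sorted $P$-minimal structure is purely Presburger and stably embedded — in particular that parameters from the $K$-sort can neither create non-Presburger subsets of $\Gamma_K^{m}$ nor obstruct the uniform parametrization of such sets by $\Gamma_K$-valued definable functions. Once this is in hand, everything else is a routine combination of Presburger cell decomposition with the standard fact that Presburger-definable functions of one variable are piecewise affine with integer slopes, so I expect no further difficulty there.
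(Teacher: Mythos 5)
This theorem is not proved in the paper at all --- it is imported verbatim from \cite{cubi-leen} (Proposition 2.4) --- so there is no internal proof to compare against; judged on its own merits, your argument is essentially the standard route behind the cited result: use that the value-group sort is stably embedded with purely Presburger induced structure (the one place $P$-minimality enters, which is exactly the background developed in \cite{cubi-leen}), rewrite $X$ and the graph of $f$ as $\theta(\gamma,\delta;\bar h(s))$ with $\theta$ Presburger and $\bar h$ definable, and then apply parametric Presburger cell decomposition together with piecewise linearity of Presburger functions before substituting $\bar t=\bar h(s)$. The only deferred point worth making explicit is the definable choice of the parameter functions $\bar h(s)$, which is unproblematic since the Presburger sort admits definable choice (e.g.\ via the definable well-ordering $\lhd$ used in the proof of Proposition \ref{prop:Gamma-multiball}), so your approach is sound and coincides with how the quoted proposition is actually obtained.
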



\section{Auxiliary results on multi-balls over the value group}\label{section: auxiliary 1}

In this section $(K, \Gamma_K; \cL_2)$ will be a (general) $P$-minimal structure. The main result of this section is Proposition \ref{prop:Gamma-multiball}, which holds under the assumtion of relative $P$-minimality.

\begin{definition}\label{def:relPmin} A structure $(K, \Gamma_K; \cL_2)$ is called \emph{relative $P$-minimal} if for all $n\geqslant 0$, every $\cL_2$-definable subset of $K\times \Gamma_K^n$ is definable in $\cL_{\text{ring},2}$. 
\end{definition}

\begin{definition}\label{def:evp} A structure $(K,\Gamma_K; \cL_2)$ has the \emph{extreme value property} if for every closed and bounded subset $U\subseteq K$ and every definable continuous function $f\colon U \to \Gamma_K$, $f(U)$ admits a maximal value.  
\end{definition}

The following is a reformulation of Theorem 4.1 in \cite{Dar-Hal-2017}. 

\begin{theorem}[Darni\`ere-Halupczok]\label{thm:relPmin} Assume that $(K, \Gamma_K; \cL_2)$ is $P$-minimal and satisfies the extreme value property. Then $(K,\Gamma_K; \cL_2)$ is relative $P$-minimal.
\end{theorem}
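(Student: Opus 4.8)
The statement is presented as a reformulation of \cite[Theorem 4.1]{Dar-Hal-2017}, so the plan is essentially to invoke that result; the only genuine task is to check that the hypotheses match. Concretely, I would first reconcile the two frameworks: the present paper works with a genuinely two-sorted structure $(K,\Gamma_K;\cL_2)$ in which the value-group sort carries (only) the Presburger language, whereas the cited work is organised around a single home sort. Since no structure beyond Presburger is imposed on $\Gamma_K$ (cf. \cite[Section 2]{cubi-leen}), this passage is harmless, and I would then identify the extreme value property of Definition \ref{def:evp} with the hypothesis under which Darni\`ere--Halupczok establish that every definable subset of $K\times\Gamma_K^n$ is $\cL_{\text{ring},2}$-definable. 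That identification is the one point requiring care, as it depends on their precise definitions; modulo it, the conclusion is literally the definition of relative $P$-minimality and nothing further is needed.

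For the reader who wants to see where the hypothesis does its work, here is the mechanism I would make explicit. One argues by induction on $n$; the case $n=0$ is the definition of $P$-minimality, and for $n\geqslant 1$ one fibers a definable $X\subseteq K\times\Gamma_K^n$ over its last $\Gamma_K$-coordinate. Each fiber $X_\gamma\subseteq K\times\Gamma_K^{n-1}$ is $\cL_{\text{ring},2}$-definable by the induction hypothesis and, pointwise in $\gamma$, can be described using finitely many parameters (centres, valuative radii, congruence conditions), as in the cell decompositions recalled in Section \ref{section: cell decomposition}. What must be proved is that these parameters can be chosen \emph{uniformly and $\cL_{\text{ring},2}$-definably} in $\gamma$; this uniformisation is exactly the content extracted from \cite[Theorem 4.1]{Dar-Hal-2017}, and the extreme value property is the tool that makes the required choices over $\Gamma_K$ available.

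The main obstacle is precisely this uniformisation step. Without some form of the extreme value property one cannot in general turn fiberwise ring-definability into uniform ring-definability --- this is the same phenomenon that, in the absence of definable Skolem functions, forces the introduction of clustered cells in Theorem \ref{thm:celldecomposition} --- so the hypothesis cannot simply be dropped. A secondary, routine point is that the induction must be run in the mixed sort: after fibering over one $\Gamma_K$-variable the remaining fibers still live in $K\times\Gamma_K^{n-1}$, and one should check that the inductive hypothesis is applied there rather than to subsets of $\Gamma_K^{n-1}$ alone (for which the statement is immediate from Presburger quantifier elimination). Since all of the hard work is already carried out in \cite{Dar-Hal-2017}, in practice the proof reduces to the hypothesis-matching described in the first paragraph.
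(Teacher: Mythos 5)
Your proposal matches the paper's treatment: the paper offers no proof at all, simply stating the theorem as a reformulation of Theorem 4.1 of Darni\`ere--Halupczok \cite{Dar-Hal-2017}, which is exactly the citation-plus-hypothesis-matching in your first paragraph. Your further sketch of the internal induction/uniformisation mechanism goes beyond anything the paper contains (and is only a heuristic for what happens inside \cite{Dar-Hal-2017}), but it does not affect the correctness of the proof-by-citation.
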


\begin{remark}\label{rem:relPmin} Note that every $P$-minimal expansion of a $p$-adic field satisfies the extreme value property. Indeed, this holds more generally for any $P$-minimal field having value group $\ZZ$. Therefore, all the results proven in this section for relative $P$-minimal structures, will hold in particular for $P$-minimal expansions of $p$-adic fields.
\end{remark}

\subsection{A finiteness result}

The purpose of this subection is to show the following theorem:

\begin{theorem}\label{thm:compactify} Let $(K, \Gamma_K; \cL_2)$ be a relative $P$-minimal structure and let  $A \subseteq S \times \Gamma_K \times K$ be a definable multi-ball of order $k$ with fibers of the form
\[A_{s,\gamma} = \text{union of $k$ disjoint balls in } \pball.\]
Then there exists a uniform bound $N \in \NN$, such that for every $s \in S$, 
\[ \#\{B \in \pball \mid \exists \gamma \in \Gamma_K: B \subseteq A_{s,\gamma}\} < N.\]
\end{theorem}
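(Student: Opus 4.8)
The plan is to combine an elementary volume estimate with the basic fact that a definable function $\Gamma_K^{\ell}\to K$ has finite image; relative $P$-minimality is exactly what makes this second ingredient available. First I would reduce the statement to a covering problem. Every ball of $\pball$ has measure $q_K^{-1}$, so a union of balls of $\pball$ that is contained in a single ball of $\mathbb{B}_{\rho}$, with $\rho\leqslant 1$, consists of exactly $q_K^{\,1-\rho}$ such balls, and any ball of $\pball$ lying inside a union of finitely many balls of $\mathbb{B}_{\rho}$ lies inside one of them. Hence it suffices to produce constants $\rho^{*}\leqslant 1$ and $R\in\N$, independent of $s$, and, for each $s\in S$, a family of at most $R$ balls in $\mathbb{B}_{\rho^{*}}$ whose union contains $\bigcup_{\gamma\in\Gamma_K}A_{s,\gamma}$; then $N:=R\,q_K^{\,1-\rho^{*}}+1$ works.

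Next, fix $s$. By relative $P$-minimality the set $A_s\subseteq\Gamma_K\times K$ is $\Lringtwo$-definable, and over $\Lringtwo$ definable Skolem functions exist, so cell decomposition (Theorem~\ref{thm:celldecomposition}, which in this setting yields classical cells only; cf.\ \cite{denef-86}) partitions $A_s$ as $\bigsqcup_j X_j$, with $X_j\subseteq D_j\times K$, $D_j\subseteq\Gamma_K$ definable, definable centre $c_j\colon D_j\to K$, definable bounds $\alpha_j,\beta_j$, constant $\lambda_j\in K$, and
\[
(X_j)_\gamma=\{\,t\in K\mid \alpha_j(\gamma)\ \square_1\ \ord(t-c_j(\gamma))\ \square_2\ \beta_j(\gamma)\ \wedge\ t-c_j(\gamma)\in\lambda_j Q_{n,m}\,\}.
\]
Now the volume estimate: $\vol(A_{s,\gamma})=k\,q_K^{-1}$ is a fixed constant. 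Cells with $\lambda_j=0$ have single-point (or empty) fibres and can be discarded, since the balls of $\pball$ inside $A_{s,\gamma}$ have positive measure and hence meet a cell with $\lambda_j\neq 0$. For a cell with $\lambda_j\neq 0$: if $\square_1$ were ``no condition'' then any nonempty fibre would have infinite measure, impossible inside $A_{s,\gamma}$, so $\square_1$ must be ``$<$''; and a nonempty fibre $(X_j)_\gamma$ contains (a piece of) a valuative shell of radius at most $\alpha_j(\gamma)+n$ and of measure at least $q_K^{-\alpha_j(\gamma)-n-m}$, which must be at most $k\,q_K^{-1}$, forcing $\alpha_j(\gamma)\geqslant\alpha^{*}$ for a constant $\alpha^{*}\in\Z$ depending only on $k,q_K,n,m$. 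Consequently $(X_j)_\gamma\subseteq B_{\alpha^{*}+1}(c_j(\gamma))$ for every $\gamma\in D_j$.

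Finally I would control the centres. It is a standard fact about $p$-adically closed fields that a definable function $\Gamma_K^{\ell}\to K$ has finite image, with cardinality bounded uniformly in parameters, and this transfers to the relative $P$-minimal setting because the graph of such a function lies in $K\times\Gamma_K^{\ell}$ and is therefore $\Lringtwo$-definable. Applying this to each $c_j$ gives a bound $R_j$, uniform in $s$, on $\#\{c_j(\gamma)\mid\gamma\in D_j\}$, so $\bigcup_{\gamma}(X_j)_\gamma$ is covered by at most $R_j$ balls of $\mathbb{B}_{\alpha^{*}+1}$. Putting $\rho^{*}:=\min(\alpha^{*}+1,\,1)$ and $R:=\sum_j R_j$, where the sum runs over the finitely many cells with $\lambda_j\neq 0$, yields the covering required in the first step, with $\rho^{*}$ and $R$ independent of $s$; this proves the theorem for each fixed $s$. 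The passage to a uniform $N$ is then routine by compactness: relative $P$-minimality is preserved under elementary equivalence, so a failure of uniformity would, in an $\omega$-saturated elementary extension, produce some $s$ for which the set of balls is infinite, contradicting the fixed-$s$ statement.

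I expect the main obstacle to be this last step — having available, and citing cleanly, the finiteness of images of definable maps $\Gamma_K^{\ell}\to K$ together with its parameter-uniform form, which is precisely where relative $P$-minimality (rather than bare $P$-minimality) is used. By contrast the volume estimate is routine once one recalls that all balls of $\pball$ have measure $q_K^{-1}$, and the reduction of the first step and the counting at the end of the third are pure bookkeeping.
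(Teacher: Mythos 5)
Your overall strategy is sound and genuinely different in organization from the paper's. You decompose the fixed fibre $A_s\subseteq\Gamma_K\times K$ into cells with the value-group variable as base and the field variable as fibre, bound each cell fibre inside a ball of standard radius around its centre by a size comparison with the $k$ balls of $\pball$, and then invoke finiteness of the image of the centres $c_j\colon D_j\to K$. The paper works in the opposite orientation ($\Gamma$-cells over a semialgebraic base $D\subseteq K$, as in Theorem~\ref{thm:recti}) and replaces your size estimate by the leaf-constancy Lemmas~\ref{lemma:localconstant} and~\ref{lemma:partition} followed by a counting argument. Both proofs rest on the same two pillars, namely relative $P$-minimality to make $A_s$ semialgebraic and the finite-image property of definable maps $\Gamma_K\to K$ (which is not folklore you need to hunt for: it is Lemma~\ref{lem:finiteimage} of this paper, proved for all $P$-minimal structures), and both end with the same compactness reduction to the fixed-$s$ statement; the paper is equally terse about why the fixed-$s$ argument, which uses relative $P$-minimality, may be run in the saturated elementary extension, so your appeal to preservation under elementary equivalence is no worse justified than the paper's ``by compactness'', though neither of you proves it.

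Two repairs are needed. First, your justification for getting classical cells only, ``over $\Lringtwo$ definable Skolem functions exist'', is wrong for bases of the value-group sort: by the very finite-image lemma you use afterwards, the semialgebraic family $\{x\in K\mid \ord(x)=\gamma\}_{\gamma\in\Gamma_K}$ admits no definable section, so Skolem functions from $\Gamma_K$ into $K$ do not exist. The statement you actually need is nevertheless true, namely that every $\Lringtwo$-definable subset of $\Gamma_K\times K$ is a finite union of classical cells over the $\Gamma$-base with definable (hence, by finite image, piecewise constant) centres; but it must be quoted as the two-sorted semialgebraic cell decomposition (this is precisely what the paper itself invokes at the start of Lemma~\ref{lemma:localconstant}), not deduced from Skolem functions. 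Second, your size estimate is phrased in terms of Haar measure, which only makes sense when $\Gamma_K=\ZZ$; the theorem concerns arbitrary relative $P$-minimal structures and, more to the point, your own final compactness step requires running the fixed-$s$ argument in an $\omega$-saturated elementary extension whose value group is not $\ZZ$. Recast the estimate as ball counting, as the paper does: if a nonempty fibre $(X_j)_\gamma$ contains a ball of valuative radius at most $1-r$ with $q_K^{r}>k$, then that ball contains more than $k$ disjoint balls of $\pball$, contradicting the fact that it lies inside $A_{s,\gamma}$, a union of exactly $k$ balls of $\pball$. With these two changes your argument goes through and gives the standard lower bound $\alpha^*$ and the covering you describe.
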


\begin{remark}
This theorem holds also for multi-balls for which the $k$ balls in each fiber are in $\BB_\eta$ for some $\eta \in \Gamma_K$. 
\end{remark}

Before we can give the proof of Theorem \ref{thm:compactify}, we will need some preliminary results. The following lemma is due to Haskell and Macpherson \cite[Remark 3.4]{has-mac-97}.

	\begin{lemma}\label{lem:locost} Let $(K, \Gamma_K; \cL_2)$ be a $P$-minimal structure and let $g \colon D \subseteq K\to \Gamma_K$ be a definable function. Then there exists a finite set $D'$ such that $g$ is locally constant on $D\backslash D'$. 
\end{lemma}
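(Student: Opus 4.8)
\textbf{Proof plan for Lemma~\ref{lem:locost}.}

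The plan is to reduce the statement to the cell decomposition theorem of Denef--Macpherson type that is already available in the $P$-minimal setting (Theorem~\ref{thm:celldecomposition}), applied to the domain $D \subseteq K$ together with the graph of $g$. First I would apply the Clustered Cell Decomposition Theorem to the definable set $D$ (viewed inside $S\times K$ with $S$ a point, so $D\subseteq K$), obtaining a finite partition of $D$ into classical cells and regular clustered cells. For each cell $C$ in this partition, the defining data consists of a center (or a multi-ball of centers, in the clustered case), a radius condition $\alpha \ \square_1 \ \ord(t-c) \ \square_2 \ \beta$, and a congruence condition $t - c \in \lambda Q_{n,m}$; since $S$ is a point here, $\alpha, \beta$ are simply elements of $\Gamma_K \cup \{\pm\infty\}$ and $c$ (resp. each section $\sigma_j$) is a single element of $K$.

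The key observation is that on such a cell, after possibly refining once more, the function $g$ restricted to $C$ factors through one of the one-variable "coordinate" maps that the cell supports --- concretely, one may push the domain down to $\Gamma_K$ via $t \mapsto \ord(t-c)$ (or $\ord(t-\sigma_j)$), since by $P$-minimality, a definable function $D \to \Gamma_K$, after cell decomposition, depends on $t$ only through such valuative data on each piece. Then I would invoke the Presburger cell decomposition / rectilinearization result (Theorem~\ref{thm:recti}) for definable functions $\Gamma_K \to \Gamma_K$: on each $\Gamma$-cell the induced function is affine in $\gamma = \ord(t-c)$. An affine function on a $\Gamma$-cell is constant only when the leading coefficient vanishes, so to get \emph{local} constancy on $D \setminus D'$ I argue instead topologically: each fiber $\{t : \ord(t-c) = \gamma\}$ of the valuation map is open in $K$, so if $g$ depends only on $\gamma$, it is automatically locally constant on the union of those cells for which $\alpha = \beta$ is excluded (the one-dimensional-in-$\gamma$ cells); the only points where local constancy can fail are the isolated points of $D$ coming from cells on which the radius condition forces $\ord(t-c)$ to be a single fixed value $\gamma_0$ (i.e. "leaves" that are degenerate to a single ball, or the points excluded by $\square_1 = \square_2 = \emptyset$ collapsing), together with the finitely many branching heights of the clustered cells. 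Collecting these finitely many bad points over all finitely many cells yields the required finite set $D'$, outside of which $g$ is locally constant.

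The main obstacle I anticipate is handling the clustered cells cleanly: there the "center" is not a definable element but a non-definable section $\sigma_j$, so the naive map $t\mapsto \ord(t-\sigma_j)$ is not obviously usable to express $g$. The way around this is to use that, by Theorem~\ref{thm:symmetrictrees}, all branching heights of $\Sigma$ are bounded and (after partition) fixed, so below the branching height the functions $\ord(t-\sigma_j)$ for different $j$ all agree; hence on the portion of the cell at heights below all branching heights one may use any section and the value $\ord(t - \sigma_1)$ is a well-defined definable quantity, while the portion of the cell near or above the branching heights consists of finitely many leaves (finitely many balls), contributing only finitely many points to $D'$. Once that case analysis is set up, the argument is a routine combination of $P$-minimal cell decomposition and the observation that balls in $\pball$-type families are open, so that dependence on valuative data alone forces local constancy away from a finite exceptional set.
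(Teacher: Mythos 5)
First, note that the paper does not prove this lemma at all: it is quoted directly from Haskell--Macpherson \cite[Remark 3.4]{has-mac-97}, so any argument you give must be self-contained (or a citation) and, in particular, must not lean on results that the paper derives \emph{after} this lemma.

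There is a genuine gap at the pivotal step of your plan. Theorem \ref{thm:celldecomposition} (and its refinement, Theorem \ref{thm:symmetrictrees}) is a statement about definable \emph{sets} only; it says nothing about how a definable function $g\colon D\to\Gamma_K$ behaves on the pieces. Your assertion that ``by $P$-minimality, a definable function $D\to\Gamma_K$, after cell decomposition, depends on $t$ only through $\ord(t-c)$ on each piece'' is exactly the hard content of the lemma: once $g$ factors through $\ord(t-c)$ on a piece, local constancy is immediate because the leaves are open, so nothing of substance is left to prove. A priori $g$ could vary inside a single leaf (say, through angular-component data of $t-c$ of arbitrarily high order), and the set-level cell decomposition does not exclude this. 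The closest statement in the paper, Lemma \ref{lemma:localconstant} (constancy on leaves), is proved only under the extra hypothesis of relative $P$-minimality --- not available in the generality of Lemma \ref{lem:locost} --- and its proof goes through Lemma \ref{lem:finiteimage}, whose Case 2 invokes Lemma \ref{lem:locost} itself; so appealing to that circle of results would be circular. Theorem \ref{thm:recti} only enters after the reduction to valuative data, so it cannot repair the step. A secondary problem: in the clustered case you propose to absorb ``the portion of the cell near or above the branching heights'' into $D'$, but that portion is a finite union of \emph{balls}, hence infinite, so it cannot be dumped into a finite exceptional set; you would again need to know $g$ is locally constant there, which is the same unproved claim. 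A correct self-contained argument runs along different lines (e.g., the definable set of points where $g$ fails to be locally constant would, if infinite, contain a ball by one-variable $P$-minimality, and one then derives a contradiction from the semialgebraicity of the fibers $g^{-1}(\gamma)$), which is essentially the Haskell--Macpherson argument the paper cites.
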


	\begin{lemma}\label{lem:finiteimage} Let $(K, \Gamma_K; \cL_2)$ be a $P$-minimal structure and let $f:\Gamma_K\to K$ be a definable function. Then $f$ has finite image. 
\end{lemma}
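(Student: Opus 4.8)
The plan is to argue by contradiction: assuming $f(\Gamma_K)$ is infinite, I would produce a definable injection of a ball into a cartesian power of $\Gamma_K$, which is impossible by Lemma~\ref{lem:locost}.

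First I would observe that $f(\Gamma_K)$ is a definable subset of $K$, so by $P$-minimality it is a finite union of cells (with parameter set a point); a $0$-cell over a point is finite, while every nonempty classical $1$-cell or clustered cell over a point contains a ball. Hence, if $f(\Gamma_K)$ is infinite it contains a ball $B=B_r(a)$. Composing $f$ with the affine, hence definable, map $y\mapsto (y-a)\pi_K^{-r}$ and restricting the domain to the definable set $D:=f^{-1}(B)\subseteq\Gamma_K$, we may assume we have a definable surjection $\phi\colon D\to\cO_K$ with $D\subseteq\Gamma_K$.

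The key step is to invert $\phi$ on the $\Gamma_K$ side. Using that the induced structure on the value-group sort of a $P$-minimal structure is the pure Presburger structure and that $\Gamma_K$ is stably embedded (orthogonality of the sorts; see \cite{has-mac-97}), the uniformly definable family $\{\phi^{-1}(x)\colon x\in\cO_K\}$ of nonempty subsets of $\Gamma_K$ is a Presburger-definable family. Since Presburger arithmetic admits definable choice for such families, there is a definable function $g\colon\cO_K\to\Gamma_K$ with $\phi(g(x))=x$ for all $x$; equivalently, one may instead take definable canonical parameters $e(x)\in\Gamma_K^{l}$ of the sets $\phi^{-1}(x)$ and set $g:=e\colon\cO_K\to\Gamma_K^{l}$. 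What matters is that the resulting definable map separates the fibres: as distinct fibres $\phi^{-1}(x),\phi^{-1}(x')$ are disjoint and nonempty, they are distinct, so $g$ is injective.

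Finally I would apply Lemma~\ref{lem:locost} to each coordinate of $g$: there is a finite set $D'\subseteq\cO_K$ off which $g$ is locally constant, so $g$ is constant on some ball $B'\subseteq\cO_K\setminus D'$. Since balls are infinite, this contradicts the injectivity of $g$, and therefore $f(\Gamma_K)$ is finite. The main obstacle is the third step: producing a fibre-separating definable map for $\phi$, which rests on the orthogonality of the value-group sort — that $\Gamma_K$ carries only Presburger structure and is stably embedded — together with definable choice in Presburger arithmetic; the reduction to a surjection onto $\cO_K$ and the final application of Lemma~\ref{lem:locost} are routine.
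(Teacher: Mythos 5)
Your proposal is correct in substance, but it takes a genuinely different route from the paper. The paper's proof applies $\Gamma$-cell decomposition (Theorem \ref{thm:recti}) to the \emph{inverted graph} $\{(x,\gamma): f(\gamma)=x\}$ and shows that on each $\Gamma$-cell the $K$-part $D$ must be a single point (i.e.\ $\lambda=0$): if $D$ were an infinite $K$-cell, then either unbounded $\gamma$-ranges or Lemma \ref{lem:locost} (applied to the bound functions $\alpha,\beta$ on $D$) produce one $\gamma$ with two images, contradicting that $f$ is a function. You instead work with the image: by $P$-minimality an infinite image contains a ball, you rescale to get a definable surjection $\phi\colon D\subseteq\Gamma_K\to\cO_K$, build a definable section $g\colon\cO_K\to\Gamma_K$ (necessarily injective), and contradict Lemma \ref{lem:locost}. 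This is a clean argument, but it imports stronger input: stable embeddedness of $\Gamma_K$ with pure Presburger induced structure (with field-sort parameters) plus definable choice on the $\Gamma$-side. These facts are indeed available in this setting and are used later in the paper — the proof of Proposition \ref{prop:Gamma-multiball} takes $\lhd$-minima of $\Gamma_K$-fibers over parameters of arbitrary sort — and that is also the cleanest way to make your key step precise: rather than speaking of a ``Presburger-definable family'' and canonical parameters (where uniformity in the field parameter $x$ is exactly the point at issue, and where \cite{has-mac-97} is not quite the right reference for purity of the value group), simply set $g(x):=\min_\lhd\phi^{-1}(x)$ for the definable well-ordering $\lhd$; each fiber has a $\lhd$-minimum because it is $\cL_{\text{Pres}}$-definable and $(\Gamma_K,\cL_{\text{Pres}})\equiv(\Z,\cL_{\text{Pres}})$, and $g$ is then $\cL_2$-definable and injective since $\phi\circ g=\mathrm{id}$. (Two cosmetic points: for general $p$-adically closed $K$ replace $\pi_K^{-r}$ by any element of valuation $-r$, and note that ``infinite definable subsets of $K$ contain a ball'' is itself a consequence of $P$-minimal cell decomposition.) In short, the paper's proof is more elementary and self-contained, using only Theorem \ref{thm:recti} and Lemma \ref{lem:locost}; yours trades that for a shorter conceptual argument resting on purity of the value group and definable choice.
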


\begin{proof} We apply $\Gamma$-cell decomposition (Theorem \ref{thm:recti}) to the inverted graph of $f$, that is, to the set
		\[
		A:=\{(x,\gamma)\in K\times\Gamma_K: f(\gamma)=x\}.
		\]
		It is sufficient to show that on each $\Gamma$-cell $C$ of such a decomposition, the projection onto the first coordinate is finite. Let $C$ be a given $\Gamma$-cell of the decomposition,
		\[
		C:= \left\{(x,\gamma)\in D\times \Gamma_K \Bigm\vert \begin{array}{l} \alpha(x)\ \square_1' \ \gamma \ \square_2' \ \beta(s), \\ \gamma \equiv k\mod n' \end{array}\right\},
		\]
		where $D$ is a definable subset of $K$, $\alpha, \beta$ are definable functions $D\to\Gamma_K$ and $k, n'\in \NN$. We may furthermore assume that $D$ is a $K$-cell over $\emptyset$, that is, $D$ is of the form
		\[
		D = \left\{x\in K \mid \gamma_1\ \square_1 \ \ord(x-c) \ \square_2 \ \gamma_2\wedge \ x-c \in \lambda Q_{m,n}\right\},
		\]
		where $\gamma,\gamma_2\in \Gamma_K$, $c,\lambda\in K$, and $m,n\in \NN$. We will show that $\lambda=0$, by deriving a contradiction from $\lambda \neq 0$. This result will imply directly that the projection onto the first coordinate is finite. So let us assume that $\lambda\neq 0$. Consider the following two cases.
		
		\
		
		\textbf{Case 1:} Suppose that $\square_1'$ (resp.\ $\square_2'$) equals ``no condition''. Pick distinct $x, y\in D$ and $\gamma\in \Gamma_K$ such that $\gamma\equiv k \ \text{mod}\ n'$ and $\gamma \in D_x \cup D_y$. Because of the assumption, we can do this by taking $\gamma$ small enough (resp.\ big enough), i.e., $\gamma<\min(\beta(x),\beta(y))$ (resp.\ $\gamma>\max(\alpha(x),\alpha(y))$). This contradicts the assumption that $f$ is a function, since $\gamma$ will have two images $x$ and $y$.
		
		\
		
		\textbf{Case 2:} Suppose that both $\square_1'$ and $\square_2'$ are `$<$'. By Lemma \ref{lem:locost}, there exist distinct $x,y\in D$ such that $\alpha(x)=\alpha(y)$ and $\beta(x)=\beta(y)$. As before, this contradicts the assumption that $f$ is a function, since any $\gamma$ such that $\alpha(x)<\gamma<\beta(x)$ will have both $x$ and $y$ as images.
\end{proof}

\begin{lemma}\label{lemma:localconstant}
Let $(K, \Gamma_K; \cL_2)$ be a relative $P$-minimal structure and let $\alpha\colon D \subseteq K \to \Gamma_K$ be a definable function. Then there exists a finite set $D'$ and constants $c_1, \ldots, c_l \in K$, and a partition of $D \setminus D'$ into $l$ $1$-cells $C_i^{c_i}$, such that the function $\alpha$ is constant on each of the leaves $C_i^{c_i, \gamma}$.
\end{lemma}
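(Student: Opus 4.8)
The plan is to use relative $P$-minimality to replace the $\cL_2$-definable function $\alpha$ by a semi-algebraic one, for which a classical cell decomposition of functions---with \emph{definable} centers---is available, and then to observe that after such a decomposition $\alpha$ depends only on the valuative distance to the center, which is precisely what ``constant on leaves'' means.

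First I would invoke Lemma~\ref{lem:locost} as a preliminary normalization, removing a finite set outside of which $\alpha$ is locally constant on $D$ (if $\alpha$ is piecewise constant there is nothing to prove). Then comes the substantive step. By relative $P$-minimality (Definition~\ref{def:relPmin}) the graph of $\alpha$, being a definable subset of $K\times\Gamma_K$, is definable in $\cL_{\text{ring},2}$, so $\alpha$ is a semi-algebraic $\Gamma_K$-valued function. Applying a classical cell decomposition for semi-algebraic functions in the style of Denef~\cite{denef-86} then partitions $D$, up to finitely many points, into finitely many classical $1$-cells $C_i^{c_i}$; since there is no parameter space, the centers $c_i$ are genuine elements of $K$, and on each such cell $\alpha(x)$ is a $\Z$-affine function of $\ord(x-c_i)$. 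By definition every leaf $C_i^{c_i,\gamma}$ lies in the fiber $\{x:\ord(x-c_i)=\gamma\}$, so $\alpha$ is constant on each leaf; absorbing the finitely many removed points into $D'$ yields the statement. (Lemma~\ref{lem:finiteimage}---a definable map $\Gamma_K\to K$ has finite image---is the structural fact ensuring, and in a more self-contained argument resting only on Theorems~\ref{thm:celldecomposition} and~\ref{thm:recti} together with Lemma~\ref{lem:locost} actually \emph{proving}, that only finitely many centers $c_i$ are ever needed.)

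The step I expect to be the main obstacle is exactly this: producing a \emph{finite} partition into $1$-cells on which $\alpha$ becomes a function of $\ord(x-c_i)$. In a $P$-minimal structure without definable Skolem functions the naive approach breaks down: the maximal balls on which $\alpha$ is locally constant may shrink strictly faster than the valuative distance to any fixed center---so that they effectively ``spiral towards the center''---and organizing them into finitely many $1$-cells would require re-centering infinitely often, a choice of new centers for which no definable selection need exist. Relative $P$-minimality dissolves this difficulty by transporting the whole problem into the semi-algebraic language, where definable Skolem functions are present; the finite-image property of definable maps $\Gamma_K\to K$ (Lemma~\ref{lem:finiteimage}) then bounds the number of centers. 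This is precisely the type of Skolem-free obstruction around which the paper is organized, and the reason this section is developed under the hypothesis of relative $P$-minimality.
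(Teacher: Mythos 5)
Your proposal is correct and follows essentially the same route as the paper: use relative $P$-minimality to make the graph of $\alpha$ semi-algebraic, apply Denef-style classical cell decomposition, and conclude via finiteness of the centers that $\alpha$ depends only on $\ord(x-c_i)$ on each cell, hence is constant on leaves. The only real difference is presentational: the paper does not cite a ready-made preparation theorem for $\Gamma_K$-valued functions but decomposes the \emph{inverted} graph, where $\gamma$ is a parameter and the centers are a priori definable functions $c\colon \Gamma_K\to K$, so Lemma~\ref{lem:finiteimage} is an essential step (not merely a safeguard) in reducing to finitely many constant centers before re-partitioning the projection into the cells $C_i^{c_i}$ --- precisely the point your ``no parameter space'' remark glosses over, though your parenthetical acknowledges it.
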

\begin{proof}
By relative $P$-minimality, the inverted graph of $\alpha$ is an $\cL_{\text{ring},2}$-definable set, which can be partitioned as a finite union of classical cells of the form
\[
C:= \left\{ (\gamma,x) \in \Gamma_K \times K \Bigm\vert 
\begin{array}{cccccc }
a(\gamma) &\square_{11} &\ord(x-c(\gamma)) & \square_{12} & b(\gamma)&\wedge \ x-c(\gamma) \in \lambda Q_{n,m}\\
c_1 & \square_{21}  & \gamma &  \square_{22} & c_2 &\wedge\ \gamma \equiv k \mod n'\\
\end{array}
 \right\},
 \]
where $c\colon \Gamma_K \to K$ is a definable function. Moreover, by Lemma \ref{lem:finiteimage} we know that $c$ has finite image, hence we may as well assume that $c(\gamma)$ is in fact constant on each cell. Note that, if $\lambda =0$ for some cell $C$, then $C$ only contains a single point $(\alpha(c),c)$. We take $D'$ to be the union of these values $c \in K$.

Let us show that the projection of a cell $C$ with constant center $c(\gamma) = c$ and $\lambda \neq 0$, onto the second variable, can be written as a finite union of cells $C_i^c \subseteq D \setminus D'$ (that is, all cells are centered at $c$). 
Let $Z$ denote the projection of $C$ onto the second variable, and consider the set $Y:=\{\ord(x-c) \mid x\in Z\}$. The set $Y$ can be partitioned into finitely many $\Gamma$-cells $Y_1,\ldots, Y_r$. The reader can check that for $i\in\{1,\ldots,r\}$, the sets
\[
C_i^c:= \left\{ x \in K\mid 
\begin{array}{cccccc }
\ord(x-c)\in Y_i & \wedge \ x-c \in \lambda Q_{n,m}\\
\end{array}
 \right\}
 \]
form a cell decomposition of $Z$ with cells centered at $c$. 

Doing this for all cells for which $\lambda \neq 0$, gives a partition of $D\setminus D'$. If $x \in C_i^c$, then the value of $\alpha(x)$ equals the unique $\gamma$ for which $a(\gamma) \ \square_{11} \ \ord(x-c) \ \square_{12} \ b(\gamma)$. Hence, $\alpha(x)$ is constant on leaves of $C_i^{c}$.
\end{proof}

\begin{lemma}\label{lemma:partition}
Let $(K, \Gamma_K; \cL_2)$ be a relative $P$-minimal structure and let $\alpha_1, \alpha_2\colon D \subseteq K \to \Gamma_K$ be two definable functions. Then there exists a finite set $D'$, finitely many constants $c_1, \ldots, c_l \in K$ and a partition of $D\setminus D'$ into $l$ cells $C_j^{c_j}$, such that on each cell, both $\alpha_{1}$ and $\alpha_{2}$ have constant value on each leaf $C_j^{c_j, \gamma}$.
\end{lemma}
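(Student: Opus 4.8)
The plan is to reduce the statement to two successive applications of Lemma~\ref{lemma:localconstant}, using the geometry of $1$-cells to see that the second refinement does not destroy the conclusion already obtained for $\alpha_1$.

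First I would apply Lemma~\ref{lemma:localconstant} to $\alpha_1$. This produces a finite set $D_1'$, constants $c_1,\dots,c_{l_1}\in K$, and a partition of $D\setminus D_1'$ into $1$-cells $C_i^{c_i}$ such that $\alpha_1$ is constant on every leaf $C_i^{c_i,\gamma}$. Next, for each $i$ I would apply Lemma~\ref{lemma:localconstant} again, this time to the restriction $\alpha_2|_{C_i^{c_i}}$, obtaining a finite set $E_i$, constants $d_{i,1},\dots,d_{i,m_i}\in K$, and a partition of $C_i^{c_i}\setminus E_i$ into $1$-cells $C_{i,j}^{d_{i,j}}$ on whose leaves $\alpha_2$ is constant. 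The claim will then be that, with $D':=D_1'\cup\bigcup_i E_i$, the family $\{C_{i,j}^{d_{i,j}}\}_{i,j}$ is the desired partition.

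The heart of the argument is to check that $\alpha_1$ is still constant on every leaf $L:=C_{i,j}^{d_{i,j},\gamma}$. Such a leaf is a ball, say $L=B_\rho(a)$, and it is contained in $C_i^{c_i}$. Since $C_i^{c_i}$ is a $1$-cell with center $c_i$ (so that $\lambda_i\neq 0$, hence $0\notin\lambda_i Q_{n,m}$, hence $c_i\notin C_i^{c_i}$), we get $c_i\notin L$; this forces $\ord(a-c_i)<\rho$, and the ultrametric inequality then yields $\ord(x-c_i)=\ord(a-c_i)$ for every $x\in L$. Consequently $L$ is entirely contained in the single leaf of $C_i^{c_i}$ at height $\ord(a-c_i)$, on which $\alpha_1$ is constant by the first step; therefore $\alpha_1$ is constant on $L$. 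Combined with the constancy of $\alpha_2$ on $L$ coming from the second step, this finishes the proof.

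I expect the only delicate point to be precisely this last verification — that a leaf of the refined cell lands inside a single leaf of the coarser cell — and that it hinges entirely on the fact that the center of a $1$-cell is never an element of the cell. Beyond this observation the argument is a purely formal iteration of Lemma~\ref{lemma:localconstant}, so no further obstacle arises; in particular the same scheme would handle any finite collection $\alpha_1,\dots,\alpha_r$ of definable functions.
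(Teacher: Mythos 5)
Your proposal is correct and takes essentially the same approach as the paper: both arguments reduce to Lemma \ref{lemma:localconstant} and then hinge on the same key verification that every leaf of the refined cell lies inside a single leaf of the coarser cell, which follows because the center of a $1$-cell never belongs to the cell. The only differences are organizational — you refine sequentially (applying the lemma to $\alpha_2$ restricted to each cell of the first decomposition) whereas the paper applies the lemma to $\alpha_1$ and $\alpha_2$ simultaneously and intersects the two partitions — and your ultrametric computation $\ord(x-c_i)=\ord(a-c_i)$ for $x$ in the leaf is an equivalent rephrasing of the paper's observation that the smallest ball meeting two distinct leaves would contain the center.
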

\begin{proof}
Applying Lemma \ref{lemma:localconstant} yields two partitions of $D$:
\begin{align*}
D =  D'_{1} \cup \bigcup_{i=1}^l D_i^{d_i} = D'_{2} \cup \bigcup_{\iota=1}^{l'} E_\iota^{e_\iota},
\end{align*}
such that the $D'_{j}$ are finite sets, and $D_i^{d_i}$, resp.\ $E_\iota^{e_\iota}$ are 1-cells such that ${\alpha_{1}}$, resp.\ ${\alpha_{2}}$ have constant value on leaves of $D_i^{d_i}$, resp.\ $E_\iota^{e_\iota}$.

A refinement of both partitions can be found by considering intersections $D_i^{d_i} \cap E_\iota^{e_\iota}$, $D'_{1} \cap E_\iota^{e_\iota}$ and $D'_{2} \cap D_i^{d_i}$. The intersection of a point and a cell can either be empty or a point. The intersection of two cells $D_i^{d_i} \cap E_\iota^{e_\iota}$ is either empty, or a definable set that can once again be partitioned as a finite union of points and $1$-cells $C_j^{c_j}$. In order to finish the proof, we need to check that, for any $\gamma_0 \in \Gamma_K$, there exist $\gamma, \gamma' \in \Gamma_K$ such that
\begin{equation}\label{eq:intersect} 
C_j^{c_j, \gamma_0} \subseteq D_i^{d_i, \gamma} \cap E_\iota^{e_\iota, \gamma'}.
\end{equation}
Indeed, if this holds then $\alpha_1$ and $\alpha_2$ will have constant value on the leaves of $C_j^{c_j}$ as required.

\

We will show that there exists $\gamma \in \Gamma_K$, such that $C_j^{c_j, \gamma_0} \subseteq D_i^{d_i, \gamma}$. The same argument will allow us to find $\gamma' \in \Gamma_K$ such that $C_j^{c_j, \gamma_0} \subseteq E_\iota^{e_\iota, \gamma'}$. These two statements together imply \eqref{eq:intersect}.
Since $C_j^{c_j, \gamma_0} \subseteq D_i^{d_i}$, we know that there must exist at least one leaf of $D_i^{d_i}$ that has nonempty intersection with $C_j^{c_j, \gamma_0}$. Let $L := \{ \rho \in \Gamma_K \mid D_i^{d_i, \rho} \cap C_j^{c_j, \gamma_0}\neq \emptyset\}$ be the set listing the heights of such leaves. We need to check that $L$ cannot contain more than one element. Note that, if $L$ has more than one element, then $C_j^{c_j, \gamma_0}$ contains elements from at least two different leaves of $D_i^{d_i}$, hence $C_j^{c_j, \gamma_0}$ also contains the smallest ball that contains these elements. Such a ball will always contain the center $d_i$, hence $d_i \in C_j^{c_j, \gamma_0}$, but $d_i \notin D_i^{d_i}$. This contradicts $C_j^{c_j, \gamma_0} \subseteq D_i^{d_i}$,
and therefore we can conclude that $L$ can only have a single element $\gamma$, which implies that $C_j^{c_j, \gamma_0} \subseteq D_i^{d_i, \gamma}$.
 \end{proof}

\begin{proof}[Proof of Theorem \ref{thm:compactify}:]
By compactness, it suffices to show that for every $s \in S$, there exists $N_s \in \NN$, and balls $B_{1}, \ldots, B_{N_s}$ from $\pball$, such that
\[
\bigcup_{\gamma} A_{s,\gamma} = B_1 \cup \ldots \cup B_{N_s}.
\] 
Fix $s \in S$, and consider the fiber $A_{s} \subseteq \Gamma_K \times K$. Reversing the order of the variables and applying $\Gamma$-cell decomposition, this set can be partitioned as a finite union of cells of the form
\[
C:= \left\{ (x,\gamma) \in D \times \Gamma_K \mid \alpha_1(x) \ \square_{11}  \ \gamma \  \square_{12} \ \alpha_{2}(x) \ \wedge \  \gamma \equiv \kappa \mod n' \right\},
\]
where $D$ is a semi-algebraic cell of the form 
\[
D:= \{x \in K \mid \gamma_{1} \ \square_{21} \ \ord(x-c) \  \square_{22} \ \gamma_2 \ \wedge \ \ x-c \in \lambda Q_{n,m}\},
\]
and $\gamma_i, \kappa \in \Gamma_K$; $\lambda, c\in K$; $m, n, n' \in \N$ and the $\alpha_i\colon D \to \Gamma_K$ are definable functions. By Lemma \ref{lemma:partition}, there is a finite set $D'$, finitely many constants $c_1, \ldots, c_l \in K$ and a partition of $D\setminus D'$ into $l$ cells $C_j^{c_j}$, such that on each cell, both $\alpha_{1}$ and $\alpha_{2}$ have constant value on each leaf $C_j^{c_j, \gamma}$.

Choose $r$ such that $k < q_K^r$. 
Fix one of the cells $C_j^{c_j}$, and consider a leaf $C_j^{c_j, \gamma_0}$ for some $\gamma_0 < 1-m_j-r$, where $m_j$ is as in the set $Q_{n_j,m_j}$, appearing in the cell condition $C_j$. Note that this leaf is the union of at least $q_K^r$ disjoint balls from $\pball$. Take some $x \in C_j^{c_j, \gamma_0}$, and choose $\gamma$ such that
\[
\alpha_1(x) \ \square_{11} \ \gamma  \ \square_{12} \ \alpha_{2}(x) \ \wedge \ \gamma \equiv \kappa \mod n'.
\] 
Then $x \in A_{s,\gamma}$. Lemma \ref{lemma:partition} implies that $\alpha_i(x) = \alpha_i(x')$ for any other $x' \in C_j^{c_j, \gamma_0}$, and hence $C_j^{c_j, \gamma_0} \subseteq A_{s, \gamma}$. This means that $A_{s,\gamma}$ must contain at least $q_K^r >k$ balls from $\pball$, which contradicts our assumption that $A_{s,\gamma}$ consists of $k$ balls. 

The only way this contradiction can be avoided is if the cells $C_j^{c_j}$ have no leaves $C_j^{c_j, \gamma_0}$ for which $\gamma_0 < 1-m_j-r$. This in turn implies that $D$ can only intersect a finite number of disjoint balls from $\pball$, since for $\gamma \geqslant 1$, all leaves $C_j^{c_j, \gamma}$ of the cell $C_j^{c_j}$ are contained within a single ball of $\pball$. Hence, the theorem follows.
\end{proof}

\subsection{Multiballs over the value group}

In this section we show that in relative $P$-minimal structures, definable multi-balls on $\pball$ over definable sets of the form $S\times \Gamma_K$ can be partitioned into finitely many definable sets which are multi-balls over $S$.

\begin{proposition}\label{prop:Gamma-multiball} Let $(K, \Gamma_K; \cL_2)$ be a relative $P$-minimal structure and let $X\subseteq S\times \Gamma_K$ be a definable set and $A\subseteq X\times K$ be a multi-ball of order $k$ on $\pball$. There is a finite set $W$ and a definable partition $X=\bigcup_{w\in W}X_w$ with $S_w:=\pi_S(X_w)$, such that for every $s\in S_w$, $A_s$ has constant fibers over $(X_w)_s$ (i.e., $A_{s,\gamma_1}=A_{s,\gamma_2}$ for all $\gamma_1,\gamma_2\in (X_w)_s$). 
\end{proposition}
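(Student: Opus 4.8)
The plan is to fix $s \in S$ and analyze the definable set $A_s \subseteq (X_s) \times K$, which is a multi-ball of order $k$ on $\pball$ over $X_s \subseteq \Gamma_K$. The key observation is Theorem \ref{thm:compactify}: the union $\bigcup_{\gamma \in X_s} A_{s,\gamma}$ consists of at most $N$ balls from $\pball$, where $N$ is a \emph{uniform} bound not depending on $s$. So for each $\gamma \in X_s$, the fiber $A_{s,\gamma}$ is a choice of $k$ balls out of this finite pool of at most $N$ balls. This means the map $\gamma \mapsto A_{s,\gamma}$ takes only finitely many values (at most $\binom{N}{k}$), and crucially this is uniform in $s$.

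First I would make this precise definably. Since $A$ is a definable multi-ball on $\pball$, for $(s,\gamma) \in X$ and $b \in A_{s,\gamma}$ the ball of $b$ in $\pball$ is $b + \cM_K$. Consider the definable equivalence relation on $X$ given by $(s,\gamma_1) \sim (s,\gamma_2)$ iff $\gamma_1,\gamma_2$ lie over the same $s$ and $A_{s,\gamma_1} = A_{s,\gamma_2}$ (this is definable: equality of the two fibers as subsets of $K$ is a first-order condition, using that each is a union of $k$ balls on $\pball$). By the previous paragraph, for each $s$ this relation has at most $\binom{N}{k}$ classes on $X_s$. To get a uniform finite partition, I would use a definable selection/coding argument: by relative $P$-minimality together with $\Gamma$-cell decomposition (Theorem \ref{thm:recti}), the set $X \subseteq S \times \Gamma_K$ together with the definable function $(s,\gamma)\mapsto$ (code of $A_{s,\gamma}$, e.g.\ listing representatives of its $k$ balls modulo $\cM_K$, ordered in some definable way) can be decomposed into finitely many $\Gamma$-cells on each of which this coding function depends only on $\gamma$ in a controlled (Presburger-linear) way. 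One then observes that a $\Z$-valued (or $K$-valued-with-finite-image, by Lemma \ref{lem:finiteimage}) definable function that is bounded uniformly in the number of values it attains can be used to split $X$ into finitely many definable pieces $X_w$ on each of which the coding function is constant in $\gamma$ for every fixed $s$; that constancy is exactly the statement $A_{s,\gamma_1} = A_{s,\gamma_2}$ for $\gamma_1,\gamma_2 \in (X_w)_s$.

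The main obstacle I anticipate is passing from the \emph{pointwise} (in $s$) finiteness of the equivalence relation — which Theorem \ref{thm:compactify} gives immediately — to a genuine \emph{uniform definable} partition of $X$ indexed by a single finite set $W$. The clean way is: the set of balls $\{B \in \pball : \exists \gamma,\ B \subseteq A_{s,\gamma}\}$ has size $< N$ for all $s$, so after a definable partition of $S$ we may assume it has a fixed size $N' < N$ and admits a definable enumeration $B_1(s),\dots,B_{N'}(s)$ (using Lemma \ref{lemma:localconstant}-style arguments and finiteness of $K$-valued functions on $\Gamma$-definable sets to choose representatives). Then $A_{s,\gamma}$ is determined by the subset $I(s,\gamma) \subseteq \{1,\dots,N'\}$ of indices it uses, $|I(s,\gamma)| = k$; the function $(s,\gamma) \mapsto I(s,\gamma)$ is definable with finite range, and partitioning $X$ according to the value of $I$ gives the required $X_w$, with $W$ the (finite) set of $k$-subsets of $\{1,\dots,N'\}$ that actually occur. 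On each such $X_w$, $A_{s,\gamma}$ is literally constant as a function of $\gamma$ over $(X_w)_s$, as desired. The remaining bookkeeping — that the enumeration $B_i(s)$ can be chosen definably and that $I$ is definable — is routine given relative $P$-minimality and the finiteness lemmas already established in this section.
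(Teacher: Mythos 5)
Your opening moves coincide with the paper's: Theorem \ref{thm:compactify} gives the uniform bound $N$, so after a definable partition of $S$ the pool of balls has fixed size, each fiber $A_{s,\gamma}$ selects $k$ of them, and the relation $A_{s,\gamma_1}=A_{s,\gamma_2}$ is definable with at most $\binom{N}{k}$ classes over each $s$. The gap lies in the step you yourself flag as the main obstacle and then declare routine: the passage to a uniform definable partition via a \emph{definable enumeration} $B_1(s),\dots,B_{N'}(s)$ of the balls occurring in $\bigcup_\gamma A_{s,\gamma}$ (equivalently, coding $A_{s,\gamma}$ by an ordered list of representatives of its balls). In the non-Skolem setting this is not routine and is false in general: such an enumeration would in particular definably split a definable multi-ball of order $k$ over $S$ into $k$ definable multi-balls of order $1$, and the whole point of regular clustered cells --- whose set of centers $\Sigma$ admits no definable section and whose $k$ balls are only reachable through non-definable sections $\sigma_1,\dots,\sigma_k$ (Theorem \ref{thm:celldecomposition}(ii), Theorem \ref{thm:symmetrictrees}(i)) --- is that the constituent balls can be conjugate over the base and hence not definably distinguishable, let alone definably orderable. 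Lemma \ref{lem:finiteimage} does not help here: it concerns definable functions $\Gamma_K\to K$ and says nothing about choosing or ordering balls uniformly in $s\in S$, where $S$ may involve $K$-variables. So the ``remaining bookkeeping'' you defer is exactly the crux of the proposition.

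The paper's proof circumvents this by doing all selection on the value-group side, where definable choice \emph{is} available. It fixes the definable well-ordering $\lhd$ on $\Gamma_K$ given by $x\lhd y \Leftrightarrow |x|<|y| \vee x=y \vee (-x=y\wedge y\geqslant 0)$ (on $\Z$ this is $0\lhd -1\lhd 1\lhd -2\lhd 2\lhd\cdots$, and definable subsets of $\Gamma_K$ have $\lhd$-minima since $(\Gamma_K,\cL_{\text{Pres}})\equiv(\Z,\cL_{\text{Pres}})$). Setting $\delta_1(s):=\min_\lhd X_s$ and inductively $Z_{i+1}:=\{(s,\gamma)\in Z_i\mid A_{s,\gamma}\neq A_{s,\delta_i(s)}\}$ with $\delta_{i+1}(s):=\min_\lhd(Z_{i+1})_s$ (or $\infty$), one has $Z_{\binom{N}{k}+1}=\emptyset$; partitioning $S$ according to the last index $i$ with $\delta_i(s)\neq\infty$ and then putting $X_i:=\{(s,\gamma)\in X\mid A_{s,\gamma}=A_{s,\delta_i(s)}\}$ yields the required partition. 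Note that only \emph{equality of the definable fibers} $A_{s,\gamma}$ and $A_{s,\delta_i(s)}$ is ever tested --- no ordering, choice, or coding of balls is needed. If you replace your enumeration step by this $\Gamma$-sort selection (keeping your correct observations about the definable equivalence relation and the uniform bound), your argument becomes a proof.
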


\begin{proof}
By Theorem \ref{thm:compactify}, there is an integer $N_A$ such that for every $s$, there exists $N_s <N_A$, and balls $\{B_{1,s}, \ldots, B_{N_s,s}\} =: $ $\cB_s$ from $\pball$ (depending on $s$!), such that
\[
\bigcup_{\gamma\in X_s} A_{s,\gamma} = B_{1,s} \cup \ldots \cup B_{N_s,s}.
\] 
By partitioning $S$ into finitely many definable pieces, without loss of generality we may assume that the cardinality of $\cB_s$ is constant and equal to $N$ for all $s\in S$. 

\

For every $s\in S$, there are ${N}\choose{k}$ possible values for the fiber $A_{s,\gamma}$. Recall that a definably well-ordering $\lhd$ on $\Gamma_K$ is a linear ordering satisfying that every definable subset $Y\subseteq \Gamma_K$ has a $\lhd$-minimal element. Let $\lhd$ be the definably well-ordering on $\Gamma_K$ defined by 
\[x\lhd y \Leftrightarrow |x|<|y| \vee x=y \vee (-x = y \wedge y\geqslant 0).\]
To see that $\lhd$ is a definably well-ordering on $\Gamma_K$, note first that on $\Z$, $\lhd$ defines the well-ordering  
\[
0\lhd -1\lhd 1\lhd -2 \lhd 2 \lhd \cdots,   
\]
so, in particular, every $\cL_{\text{Pres}}$-definable subset of $\ZZ$ has a $\lhd$-minimal element. Since $(\Gamma_K,\cL_{\text{Pres}})\equiv(\ZZ, \cL_{\text{Pres}})$ (where $\cL_{\text{Pres}}$ denotes the Presburger language), the fact that every definable subset of $\Gamma_K$ is $\cL_{\text{Pres}}$-definable implies the desired property.

\

Let $\delta_1\colon S\to \Gamma_K$ be the definable function sending $s$ to $\min_\lhd(X_s)$, the minimal element with respect to the ordering $\lhd$. Setting $Z_1:=X$, we inductively define sets $Z_{i+1}\subseteq Z_i$ and functions $\delta_{i+1}\colon S\to \Gamma_K\cup\{\infty\}$ for $i\geqslant 1$ as follows:  
\[
Z_{i+1}:=\{(s,\gamma) \in Z_i\mid   A_{s,\gamma} \neq  A_{s,\delta_i(s)}\}
\]
\[
\delta_{i+1}(s):=
\begin{cases}
\min_\lhd(Z_{i+1})_{s} & \text{if } (Z_{i+1})_{s}\neq\emptyset \\ 
\infty & \text{ otherwise. }
\end{cases}
\]
The idea is to order the different configurations of $\cB_s$ appearing as fibers of $A_{s,\gamma}$ with respect to their minimal representatives in $X_s$. Doing this uniformly in $S$ and grouping the elements defining the same fiber is the idea behind the sets $Z_i$. Note that 
\[
X:=Z_1\supseteq Z_2 \supseteq \cdots \supseteq Z_{ {N}\choose{k} } \supseteq Z_{{{N}\choose{k}} +1}=\emptyset. 
\]
Moreover, for $s\in S$, if $\delta_{i}(s)\neq \infty$, the same holds for all $i'<i$. Therefore, the sets $S_1,\ldots,S_{{{N}\choose{k}}}$ with  
\[
S_i:=\{s\in S: \delta_i(s)\neq \infty \wedge \delta_{i+1}(s)=\infty \},  
\]
form a definable partition of $S$, which induces a partition of $X$ as well (some of the $S_i$ might be empty). By restricting to $S=S_m$ for some $m\in \{1,\ldots,{{N}\choose{k}} \}$, we find that $A_{s,\delta_1(s)},\ldots,A_{s,\delta_{m}(s)}$ are all the multi-balls that appear as fibers of $A_{s}$ over $X_s$. To conclude the proof we use the functions $\delta_1,\ldots,\delta_{m}$ to partition $X$ as follows. For $1\leqslant i\leqslant m$ let  
\[
X_i:=\{(s,\gamma)\in X: A_{s,\gamma}=A_{s,\delta_i(s)}\}. 
\]
We clearly have that the sets $\{X_1,\ldots,X_m\}$ form a partition of $X$. Moreover, by construction, for every $s\in S$ the fibers of $A_s$ over $(X_i)_s$ are constant and equal to $A_{s,\delta_i(s)}$. 
\end{proof}

\section{Auxiliary results on the form of the elements of $\cC_{\textnormal{exp}}^*(X)$}\label{section: auxiliary 2}

In the following lemma we state an elementary yet important result on integration of the character $\psi$ over balls of valuation radius at most 0.

{\begin{lemma}\label{lemma:cancel}
Let $a \in K$ and $\gamma \in \ZZ$ with $\gamma \leqslant 0$, then
\[
\int_{B_\gamma(a)} \psi(x) |dx| = 0.
\]
\end{lemma}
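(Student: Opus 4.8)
The plan is to exploit the defining property of the additive character $\psi$, namely that $\psi_{|\cO_K} \neq 1$ while $\psi$ is trivial on $\mathcal{M}_K$. The key observation is that a ball $B_\gamma(a)$ with $\gamma \leqslant 0$ is large enough to be partitioned into cosets of $\cO_K$ (when $\gamma = 0$, the ball \emph{is} a coset of $\cO_K$; when $\gamma < 0$, it splits into $q_K^{-\gamma}$ translates of $\cO_K$). So it suffices to prove the case $\gamma = 0$, i.e., that $\int_{a + \cO_K} \psi(x)\,|dx| = 0$ for every $a \in K$; the general case then follows by summing this vanishing integral over the finitely many $\cO_K$-cosets composing $B_\gamma(a)$, together with translation-invariance of the Haar measure.

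For the case $\gamma = 0$: after the substitution $x \mapsto x - a$ (using translation invariance of $|dx|$), we reduce to showing $\int_{\cO_K} \psi(x)\,|dx| = 0$. Since $\psi_{|\mathcal{M}_K} = 1$, the function $\psi$ is constant on each coset of $\mathcal{M}_K$ in $\cO_K$, taking the value $\psi(r)$ on the coset $r + \mathcal{M}_K$ for representatives $r$ running over a set $R$ of size $q_K$ (e.g. Teichmüller-type representatives, or any transversal). Hence
\[
\int_{\cO_K} \psi(x)\,|dx| = \sum_{r \in R} \psi(r)\,\vol(r + \mathcal{M}_K) = q_K^{-1} \sum_{r \in R} \psi(r).
\]
So everything comes down to showing $\sum_{r \in R} \psi(r) = 0$.

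To see that the character sum vanishes, I would use the standard orthogonality argument: since $\psi_{|\cO_K} \neq 1$ but $\psi_{|\mathcal{M}_K} = 1$, the character $\psi$ descends to a nontrivial character $\bar\psi$ of the finite additive group $\cO_K/\mathcal{M}_K \cong k_K$, and $\sum_{r \in R} \psi(r) = \sum_{\bar r \in k_K} \bar\psi(\bar r)$. For any nontrivial character $\chi$ of a finite abelian group $G$, pick $g_0$ with $\chi(g_0) \neq 1$; then $\chi(g_0)\sum_{g} \chi(g) = \sum_g \chi(g_0 g) = \sum_g \chi(g)$, so $(\chi(g_0) - 1)\sum_g \chi(g) = 0$, forcing $\sum_g \chi(g) = 0$. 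Applying this to $\bar\psi$ on $k_K$ gives the claim.

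The argument is entirely routine; there is no real obstacle. The only mild subtlety is bookkeeping with the normalization of the Haar measure (so that $\vol(\cO_K) = 1$ and $\vol(\mathcal{M}_K) = q_K^{-1}$, and more generally $\vol(B_\gamma(a)) = q_K^{-\gamma}$) and making sure the reduction from general $\gamma \leqslant 0$ to $\gamma = 0$ correctly handles the count $q_K^{-\gamma}$ of $\cO_K$-cosets; but these are standard facts about $p$-adic measure that the paper already takes for granted.
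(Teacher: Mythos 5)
Your proof is correct and rests on the same mechanism as the paper's: partition the ball into cosets on which $\psi$ is constant and cancel the resulting character sum by translating by an element $g \in \cO_K$ with $\psi(g) \neq 1$. The only difference is organizational — you first reduce to a single coset of $\cO_K$ and phrase the cancellation as orthogonality of the induced nontrivial character of the residue field $k_K$, whereas the paper applies the translation trick directly to the partition of $B_\gamma(a)$ into balls of $\pball$ — so this is essentially the paper's argument.
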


\begin{proof}
Since $\psi_{|\cO_K} \neq 1$, there exists $g \in \cO_K$ such that $\psi(g) \neq 1$. The ball $B_\gamma(a)$ is a disjoint union of $q_K^{-\gamma+1}$ balls from $\pball$ and it is easy to see that the map $x \mapsto x+g$ permutes these balls. So if we take $R$ to be a set of representatives from each of those balls, then
\[
B_{\gamma}(a) = \bigcup_{b\in R} B_1(b) = \bigcup_{b \in R} B_{1}(b+g).
\]
The character $\psi$ is constant on each of the balls $B_1(b)$, which have volume $q_K^{-1}$, so 
\begin{equation}\label{eq:permutation}
\int_{B_{\gamma}(a)} \psi(x) |dx| = q_K^{-1} \sum_{b \in R} \psi(b) = q_K^{-1} \sum_{b \in R} \psi(b + g) = q_K^{-1} \psi(g) \sum_{b \in R} \psi(b).
\end{equation}
Since $\psi(g) \neq 1$, \eqref{eq:permutation} can only hold if $\sum_{b \in R} \psi(b)=0$, which implies that $\int_{B_{\gamma}(a)} \psi(x)|dx| = 0$.
\end{proof}
Recall that in the Definition \ref{def:exp*functions}, the character $\psi$ is summed over multi-balls that consist purely of \textbf{maximal} balls from $\pball$.
The above lemma explains why it makes sense to impose that restriction.
\\\\
The following lemma gives a useful description of the form of exponential*-constructible functions.

\begin{lemma}\label{lem:general-form}
Let $X$ be a definable set. Then $\cC_{\exp}^*(X) = W(X)$, where
\[
W(X) := \left\{\sum_{i\in I} h_i(x) \sum_{B\sqsubseteq A^{i}_x} \psi(B) \Bigm\vert \begin{array}{l}I \text{ finite set, } h_i \in \cC(X), A^{i} \subseteq X\times K\\ \text{definable multi-ball of order } k_i \text{ on }\pball\end{array}\right\}.
\]
\end{lemma}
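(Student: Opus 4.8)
The plan is to prove the two inclusions $\cC_{\exp}^*(X)\subseteq W(X)$ and $W(X)\subseteq \cC_{\exp}^*(X)$ separately, the second being immediate and the first requiring a closure argument. For the easy inclusion, note that any element of $W(X)$ is a $\QQ$-linear combination of products $h_i\cdot(x\mapsto \sum_{B\sqsubseteq A^i_x}\psi(B))$, and since $h_i\in\cC(X)\subseteq\cC_{\exp}^*(X)$ and $x\mapsto\sum_{B\sqsubseteq A^i_x}\psi(B)$ is a generator of $\cC_{\exp}^*(X)$ by Definition \ref{def:exp*functions}, each such product lies in the $\QQ$-algebra $\cC_{\exp}^*(X)$; hence $W(X)\subseteq\cC_{\exp}^*(X)$.

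For the reverse inclusion, since $\cC_{\exp}^*(X)$ is by definition the smallest $\QQ$-algebra containing $\cC(X)$ and all functions of the form $x\mapsto\sum_{B\sqsubseteq A_x}\psi(B)$, it suffices to check that $W(X)$ is a $\QQ$-algebra containing these generators. Containment of the generators is clear: take $I$ a singleton and $h\equiv 1$ for the character-sum generators, and take $A_x$ to be an arbitrary fixed ball (e.g. a multi-ball of order $1$) together with the observation that $\cC(X)\cdot 1\subseteq W(X)$ since $\psi(B)$ for a degenerate choice can be normalized — more cleanly, for $h\in\cC(X)$ we write $h=h\cdot\big(\sum_{B\sqsubseteq A_x}\psi(B)\big)\cdot\big(\sum_{B\sqsubseteq A_x}\psi(B)\big)^{-1}$ is not allowed, so instead I would simply note $W(X)$ is closed under the operations below and that $\cC(X)\subseteq W(X)$ follows by allowing the multi-ball $A^i_x=B_1(a)$ for a fixed $a$, giving $\sum_{B\sqsubseteq A^i_x}\psi(B)=\psi(a)$, a nonzero constant, so $h=(\psi(a))^{-1}h\cdot\psi(a)\in W(X)$. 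Closure under $\QQ$-scalar multiplication and addition of $W(X)$ is built into the definition (concatenate index sets). The real content is closure under multiplication: given two elements $\sum_{i\in I}h_i\sum_{B\sqsubseteq A^i_x}\psi(B)$ and $\sum_{j\in J}h'_j\sum_{B\sqsubseteq A'^j_x}\psi(B)$, their product expands into a sum over $I\times J$ of terms $h_ih'_j\big(\sum_{B\sqsubseteq A^i_x}\psi(B)\big)\big(\sum_{B'\sqsubseteq A'^j_x}\psi(B')\big)$, and one must rewrite each product of two character sums as a single character sum over some definable multi-ball on $\pball$, possibly after absorbing extra constructible factors into the $h$-part.

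The main obstacle is therefore this last step: expressing $\big(\sum_{B\sqsubseteq A_x}\psi(B)\big)\big(\sum_{B'\sqsubseteq A'_x}\psi(B')\big)$ as $h(x)\sum_{B''\sqsubseteq A''_x}\psi(B'')$ for a definable multi-ball $A''$ on $\pball$ and $h\in\cC(X)$. Here I would use that $\psi$ is a character: for balls $B=B_1(b)$ and $B'=B_1(b')$ in $\pball$, $\psi(B)\psi(B')=\psi(b+b')=\psi(B_1(b+b'))$, the value of $\psi$ on the ball $B_1(b+b')\in\pball$. So the naive candidate for $A''_x$ is the set of all balls $B_1(b+b')$ with $B_1(b)\sqsubseteq A_x$ and $B_1(b')\sqsubseteq A'_x$; this is a definable subset of $K$ whose "sumset" structure need not be a disjoint union of equally-sized balls, and the same ball $B_1(b+b')$ may arise from several pairs, so the multiplicities must be tracked. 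The fix is to partition $X$ into finitely many definable pieces on which the combinatorial type of the pair $(A_x,A'_x)$ — in particular the radii of the constituent balls, their number, and the collision pattern in the sumset — is constant; on each piece the sumset is genuinely a multi-ball on $\pball$ (after shrinking to $\pball$-balls, which by Lemma \ref{lemma:cancel}-type reasoning or by direct refinement costs only a constructible factor $q_K^{\ast}$), and the constant multiplicity with which each ball of $A''_x$ appears, together with $q_K$-powers recording the relative sizes, gets absorbed into $h(x)\in\cC(X)$. Finally one reassembles the pieces using Remark \ref{rmk:extendingfunctions} to extend each piecewise-defined $W(X_w)$-element back to a $W(X)$-element, and concludes that the product lies in $W(X)$. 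This shows $W(X)$ is a $\QQ$-algebra containing all generators of $\cC_{\exp}^*(X)$, hence $\cC_{\exp}^*(X)\subseteq W(X)$, completing the proof.
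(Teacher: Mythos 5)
Your overall strategy is the paper's own: the inclusion $W(X)\subseteq\cC_{\exp}^*(X)$ is immediate, closure of $W(X)$ under $\QQ$-linear combinations is built in, and the whole content is closure under multiplication, which you attack exactly as the paper does -- via $\psi(B)\psi(\tilde B)=\psi(B+\tilde B)$ for balls of $\pball$, a stratification by the number of pairs producing a given sumset ball, cancellation (Lemma \ref{lemma:cancel}) to discard the $\pball$-balls of the sumset that fail to be maximal, and a finite definable partition of $X$ to fix the number of maximal balls, with multiplicities and indicator functions absorbed into the constructible coefficients. Two steps, however, do not work as written. First, your argument for $\cC(X)\subseteq W(X)$ writes $h=(\psi(a))^{-1}h\cdot\psi(a)$, but $(\psi(a))^{-1}h$ need not belong to the $\QQ$-algebra $\cC(X)$, since $\psi(a)$ is in general an irrational root of unity. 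The fix is to choose $a$ with $\psi(a)=1$: take the constant multi-ball $A=X\times B_1(0)$, where $B_1(0)=\mathcal{M}_K$ and $\psi_{|\mathcal{M}_K}=1$, so that $\sum_{B\sqsubseteq A_x}\psi(B)=1$ and $h=h\cdot 1\in W(X)$.

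Second, the final gluing cannot be delegated to Remark \ref{rmk:extendingfunctions}: that remark only produces an extension by zero inside $\cC_{\exp}^*(X)$, which is precisely the algebra you are trying to identify with $W(X)$, so invoking it at this point is circular -- it returns membership in $\cC_{\exp}^*(X)$, which you already know, not membership in $W(X)$, which is what closure under multiplication requires. What is needed is a $W$-level patching, and this is exactly what the paper's sets $E^{(i,j,r,t)}$ and auxiliary collections $H_t$ accomplish: multiply the constructible coefficients by the characteristic function $\mathbbm{1}_{X_w}\in\cC(X)$ of the piece, and replace each multi-ball over $X_w$ by a definable multi-ball over all of $X$ that agrees with it over $X_w$ and equals a fixed dummy union of the right number of maximal $\pball$-balls elsewhere; the indicator kills the dummy contribution. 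With this repair (and note, as a cosmetic point, that no ``constructible factor $q_K^{\ast}$'' is incurred in passing to maximal balls -- the non-maximal $\pball$-balls of the sumset simply contribute $0$ by Lemma \ref{lemma:cancel}), your argument coincides with the paper's proof.
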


\begin{proof}
The inclusion $W(X) \subseteq \cC_{\exp}^*(X)$ is clear from the definition of $\cC_{\exp}^*(X)$. Furthermore,  $W(X)$ contains the generators of $\cC_{\exp}^*(X)$ and  is closed under addition and scalar multiplication by elements of $\QQ$. Hence, it remains to show that $W(X)$ is closed under multiplication. Now consider
\[
\Big(\sum_{i \in  I} h_i(x) \sum_{B \sqsubseteq A^{i}_x} \psi(B)\Big) \cdot \Big(\sum_{j\in J} \tilde{h}_j(x) \sum_{\tilde{B} \sqsubseteq \tilde{A}^{j}_x} \psi(\tilde{B}) \Big) = \sum_{(i,j) \in I \times J} h_i(x) \tilde{h}_j(x) \Big( \sum_{\substack{B \sqsubseteq A^{i}_x, \\ \tilde{B} \sqsubseteq \tilde{A}^{j}_x}} \psi(B + \tilde{B}) \Big).
\]
The reader can check that $B + \tilde{B}$ is again a ball in $\pball$. For each $i \in I$, $j \in J$ and $r \geqslant 1$, there exist definable sets
\begin{align*}
D^{(i,j,\geqslant r)} &:= \left\{(x,b) \in X \times K \Bigm\vert \begin{array}{l} \exists b_1, \ldots, b_r \in A^{i}_x \ \exists \tilde{b}_1, \ldots, \tilde{b}_r \in \tilde{A}^{j}_x :\bigwedge_{k=1}^r (b = b_k + \tilde{b}_k)\ \wedge\\ \bigwedge_{k,l = 1}^r \big(\ord_p(b_k - b_l) \leqslant 0\ \wedge \ord_p(\tilde{b}_k - \tilde{b}_l)\leqslant 0\big) \end{array}\right\};\\
D^{(i,j,r)} & := D^{(i,j, \geqslant r)} \backslash D^{(i,j,\geqslant r+1)};\\
 E^{(i,j,r)} &:= \left\{(x,b) \in D^{(i,j,r)} \mid
\exists b' \in K: \ord_p(b-b') \geqslant 0\ \wedge
b' \not \in D^{(i,j,r)}_x 
\right\}.
\end{align*}
Each fiber $D^{(i,j,r)}_x$ consists of the balls from $\pball$ that can be written in exactly $r$ ways as the sum of a ball $B \sqsubseteq A^{i}_x$ and a ball $\tilde{B} \sqsubseteq \tilde{A}^{j}_x$. Now $E^{(i,j,r)}_x$ contains only those balls in $D^{(i,j,r)}_x$, that are maximal in $D^{(i,j,r)}_x$. Remark that for $r > k_{(i,j)}:=\min\{k_i, \tilde{k}_j\}$ (where $k_i$ is the order of $A^{i}$ and $\tilde{k}_j$ is the order of $\tilde{A}^{j}$), $D_x^{(i,j,r)}$ and $E_x^{(i,j,r)}$ are empty, so we can write
\begin{align*}
\sum_{(i,j) \in I \times J} h_i(x) \tilde{h}_j(x) \Big( \sum_{\substack{B \sqsubseteq A^{i}_x, \\ \tilde{B} \sqsubseteq \tilde{A}^{j}_x}} \psi(B + \tilde{B}) \Big) &= \sum_{(i,j) \in I \times J} \sum_{r=1}^{k_{(i,j)}} r h_i(x) \tilde{h}_j(x) \Big( \sum_{\substack{B \subseteq D^{(i,j,r)}_x, \\ B \in \pball}} \psi(B)\Big) \\
&=\sum_{(i,j) \in I \times J} \sum_{r=1}^{k_{(i,j)}} r h_i(x) \tilde{h}_j(x) \Big( \sum_{B \sqsubseteq E^{(i,j,r)}_x} \psi(B)\Big).
\end{align*}
Here we have used the fact that $\int_{D_x^{(i,j,r)} \backslash E_x^{(i,j,r)}} \psi(b) |db| = 0$ which is a consequence of Lemma \ref{lemma:cancel}.

Unfortunately, the set $E^{(i,j,r)}$ is not necessarily a multi-ball, because different fibers might contain a different number of maximal balls. However, we do know that for $1 \leqslant r \leqslant k_{(i,j)}$, each fiber $E^{(i,j,r)}_x$ contains at most $k_i\tilde{k}_j$ maximal balls from $\pball$. Hence we can partition the set $X$ into definable sets $X^{(i,j,r)}_t$, for $1 \leqslant t \leqslant k_i \tilde{k}_j$, such that $E^{(i,j,r)}_x$ contains exactly $t$ maximal balls, for each $x \in X^{(i,j,r)}_t$. Remark that for each $E^{(i,j,r)}$ we might have to consider a different partition of $X$.

Now, for each $1 \leqslant t \leqslant k_i\tilde{k}_j$, we fix a set $H_t \subseteq K$ of $t$ maximal balls from $\pball$. Then we define (with parameters) a subset of $X \times K$, of which each fiber consists of $t$ maximal balls from $\pball$:
\[
E^{(i,j,r,t)} := \{(x,b) \in E^{(i,j,r)} \mid x \in X^{(i,j,r)}_t\} \cup (X\backslash X^{(i,j,r)}_t) \times H_t .
\]
Then we find
\begin{align*}
&\Big(\sum_{i \in  I}h_i(x) \sum_{B \sqsubseteq A^{i}_x} \psi(B)\Big) \cdot \Big(\sum_{j\in J} \tilde{h}_j(x) \sum_{\tilde{B} \sqsubseteq \tilde{A}^{j}_x} \psi(\tilde{B}) \Big) =\\
&\sum_{(i,j) \in I \times J} \sum_{r=1}^{k_{(i,j)}} \sum_{t=1}^{k_i\tilde{k}_j} r h_i(x)\tilde{h}_j(x)\mathbbm{1}_{X^{(i,j,r)}_t}(x) \sum_{B \sqsubseteq E^{(i,j,r,t)}_x} \psi(B),
\end{align*}
where $\mathbbm{1}_{X^{(i,j,r)}_t} \colon X \to \ZZ$ denotes the characteristic function of the set $X^{(i,j,r)}_t$. The last expression is clearly an element of $W(X)$, since $r h_i\tilde{h}_j\mathbbm{1}_{X^{(i,j,r)}_t} \in \cC(X)$ and $E^{(i,j,r,t)}$ is a definable multi-ball of order $t$ on $\pball$.
\end{proof}

\begin{remark}
There are of course several ways of writing an $f \in \cC^*_{\exp}(X)$ as an element of $W(X)$. The main difficulty in proving Conjecture \ref{conj:normalform} lies in showing that for an integrable function $f$, at least one of those ways uses only constructible functions $h_i$ that are \textbf{integrable} themselves.
\end{remark}


\section{Closedness under integration of $\ZZ$-variables}\label{section: Z}

Let us now prove Theorem \ref{thm:Zvar}, which we restate here for the reader's convenience. 

\begin{customthm}{A} Let $X\subseteq S\times \ZZ^n$ be a definable set. Let $f\in \cC_{\exp}^*(X)$ be such that $\textnormal{Int}(f,\ZZ^n) = S$. Then there exists $g\in \cC_{\exp}^*(S)$ such that for all $s \in S$,
\[
g(s) = \int_{X_s} f(s,\gamma)  |d\gamma|.  
 \]
\end{customthm}

\begin{proof} First of all, notice that by Fubini it suffices to show the result for $n=1$. By Lemma \ref{lem:general-form}, we may suppose that $f$ has the following form 
\begin{equation}\label{eq:sum}
f(s,\gamma) = \sum_{i=1}^m c_iq_K^{\alpha_i(s,\gamma)}\prod_{k=1}^{r_i} \beta_{ik}(s,\gamma)\sum_{B\sqsubseteq A_{s,\gamma}^i} \psi(B),   
\end{equation}
where the $c_i$ are non-zero rational constants, $\alpha_i, \beta_{ik}$ are definable functions from $X$ to $\Z$ and $A^i$ are definable multi-balls on $\pball$ over $X$. Let $I$ denote the set $I:=\{1,\ldots,m\}$. 

\

By iterating Proposition \ref{prop:Gamma-multiball}, there is a finite definable partition of $X$ into sets $\{X_w\}_{w\in W}$ such that for each $s\in S_w:=\pi_S(X_w)$, and for each $i\in I$, the multi-ball $A^i_s$ has constant fibers over $(X_w)_s$. Without loss of generality, suppose from now on that $X$ is one such piece $X_w$. Therefore, for all $s\in S$ the function
\[
e_i: (s,\gamma)\mapsto \sum_{B\sqsubseteq A_{s,\gamma}^i} \psi(B) 
\]
does not depend on $\gamma$. Notice that this function is an element of $\cC_{\exp}^*(S)$. Indeed, the set $E^i\subseteq S\times K$ defined by having fibers $E^i_s:=\bigcup_{\gamma\in X_s} A_{s,\gamma}^i$, is a multi-ball on $\pball$ over $S$, which shows that the function $e_i\colon s\mapsto \sum_{B\sqsubseteq E^i_{s}} \psi(B)$ is in $\cC_{\exp}^*(S)$. By multiplying $e_i$ by the constant $c_i$, we may omit such constants and rewrite equation (\ref{eq:sum}) as 
\begin{equation}\label{eq:sum2}
f(s,\gamma) = \sum_{i\in I} e_i(s)q_K^{\alpha_i(s,\gamma)}\prod_{k=1}^{r_i} \beta_{ik}(s,\gamma).   
\end{equation}

By Theorem \ref{thm:recti} we may further suppose that $X$ is a $\Gamma$-cell of the form 
\begin{equation}\label{cell1}
X = \left\{ (s,\gamma) \in S \times \ZZ \left| \begin{array}{l} \theta_1(s)  \ \square_1 \ \gamma \ \square_2 \ \theta_2(s),\\ \gamma \equiv l \mod M \end{array}\right\}\right.,
\end{equation}
%
for $\theta_1, \theta_2$ definable functions from $S$ to $\Z$ and $l, M\in \NN$ with $M>0$, and that for all $s\in S$, the functions $\alpha_i(s,\cdot)$ and $\beta_{ik}(s,\cdot)$ are linear in $\frac{\gamma-l}{M}$. From now on, we will denote $\frac{\gamma-l}{M}$ by $\zeta$.  Writing products of linear terms as polynomials in $\zeta$, we have that, for each $i\in I$,
\begin{equation}\label{hi}
q_K^{\alpha_i(s,\gamma)}\prod_{k=1}^{r_i} \beta_{ik}(s,\gamma)= q_K^{a_i\zeta+\delta_i(s)}\sum_{k=0}^{r_i} d_{ik}(s)\zeta^k, 
\end{equation}
where $a_i\in \ZZ$, $\delta_i$ is a definable function from $S$ to $\ZZ$ and $d_{ik}\in \cC(S)$. Since $q_K^{\delta_i(s)}\in \cC_{\exp}^*(S)$, we may assume that $\delta_i(s)=0$ by merging this factor into the functions $e_i(s)$. Therefore we have that

\begin{equation}\label{eq:sum3}
f(s,\gamma) = \sum_{i\in I} e_i(s) q_K^{a_i\zeta}\sum_{k=0}^{r_i} d_{ik}(s)\zeta^k.    
\end{equation}

After merging terms with the same factor $q_K^{a_i\zeta}$, we may suppose that $a_i\neq a_j$ for all $i,j\in I$ such that $i\neq j$. Set
\[
h_i(s,\gamma):=q_K^{a_i\zeta}\left(\sum_{k=0}^{r_i} d_{ik}(s)\zeta^k\right).
\]
\begin{claim}\label{claim:integrable} If for each $s\in S$ and $i\in I$ the functions $h_{i}(s,\cdot)$ are integrable over $X_s$, then there is $g\in\cC_{\exp}^*(S)$ such that $g(s)=\int_{X_s} f(s,\gamma) |d\gamma|$. 
\end{claim}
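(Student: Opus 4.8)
The plan is to reduce the computation of $\int_{X_s} f(s,\gamma)\,|d\gamma|$ to a finite sum of geometric-type series in $q_K^{a_i}$ over the interval defined by the $\Gamma$-cell $X$, and to check that the resulting closed-form expression lies in $\cC_{\exp}^*(S)$. First I would split into cases according to the shape of the $\Gamma$-cell \eqref{cell1}: either $X_s$ is finite (when both $\square_1,\square_2$ are `$<$' and $\theta_1(s),\theta_2(s)$ are both finite, or one bound is missing but the relevant $a_i$ forces integrability), or $X_s$ is infinite in one or both directions. After a further finite partition of $S$ I may assume the case division is uniform. Summation over $\gamma\in X_s$ translates, via $\zeta=\frac{\gamma-l}{M}$, into summation of $\sum_i e_i(s)q_K^{a_i\zeta}\big(\sum_k d_{ik}(s)\zeta^k\big)$ over an interval of integers $\zeta$; since counting measure on $\ZZ$ is used, the "integral" is literally this sum.

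Next I would handle the individual terms. For a fixed $i$, the relevant sum has the form $\sum_{\zeta} q_K^{a_i\zeta}\,P_i(s,\zeta)$ where $P_i(s,\cdot)$ is a polynomial of degree $r_i$ with coefficients in $\cC(S)$. The key elementary fact is that, for a finite or infinite arithmetic range of $\zeta$, such a sum has a closed form: when $a_i\neq 0$ it is a $\QQ$-linear combination of terms of the form $q_K^{a_i\zeta_0}\cdot(\text{polynomial in }\zeta_0)$ evaluated at the endpoints $\zeta_0$ (which are definable functions of $s$ coming from $\theta_1,\theta_2,l,M$), plus possibly a constant; when $a_i=0$ it is a Faulhaber-type polynomial in the endpoints. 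In all cases the result is visibly built from functions of the form $q_K^{\beta(s)}$ with $\beta$ definable, $\ZZ$-valued definable functions, and rational constants — i.e.\ an element of $\cC(S)$ — multiplied by $e_i(s)\in\cC_{\exp}^*(S)$. Summing over the finitely many $i\in I$ and over the finitely many pieces $X_w$ in the earlier partition, and using Remark \ref{rmk:extendingfunctions} to extend back to $S$, then yields $g\in\cC_{\exp}^*(S)$ as required. Here the hypothesis that each $h_i(s,\cdot)$ is integrable over $X_s$ is exactly what guarantees that the geometric series actually converge in the cases where $X_s$ is infinite: integrability of $h_i(s,\cdot)=q_K^{a_i\zeta}\sum_k d_{ik}(s)\zeta^k$ over an unbounded range forces the relevant $a_i$ to have the correct sign (so $|q_K^{a_i}|<1$ in the direction of unboundedness), making the tail sum a well-defined element of $\cC(S)$; without this assumption the sum would diverge and no closed form exists.

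I expect the main obstacle to be bookkeeping rather than a conceptual difficulty: one must carefully organize the case analysis on $(\square_1,\square_2)$ together with the signs of the exponents $a_i$ and the finiteness of $\theta_1(s),\theta_2(s)$, and verify that the integrability hypothesis rules out precisely the divergent configurations. A secondary subtlety is that after merging terms with equal $a_i$ (done before the claim) one still needs that no \emph{cancellation} among the $e_i(s)q_K^{a_i\zeta}$-terms is required for integrability of the individual $h_i$; but this is granted by hypothesis, since the claim assumes each $h_i(s,\cdot)$ is separately integrable, so one may integrate term by term. Finally one should double-check that the endpoint values $\zeta_0$, such as $\lfloor\cdot\rfloor$-type expressions forcing $\gamma\equiv l \bmod M$, are genuinely definable $\ZZ$-valued functions of $s$ — which follows from Presburger quantifier elimination on the $\Gamma$-sort — so that the closed forms indeed land in $\cC(S)$.
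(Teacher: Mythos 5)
Your argument is correct in substance, but it takes a more computational route than the paper. Both proofs start identically: since each $h_i(s,\cdot)$ is assumed integrable over $X_s$, one integrates term by term and pulls the factors $e_i(s)\in\cC_{\exp}^*(S)$ out of the integral. At that point the paper simply notes that each $h_i$ is an $\cL_2$-constructible function on $X$ and invokes Theorem \ref{thrm:Cubides-Leenknegt} (base-stability of constructible functions under integration), so that $g_i(s)=\int_{X_s}h_i(s,\gamma)\,|d\gamma|$ lies in $\cC(S)$ and $g=\sum_{i\in I}e_i g_i\in\cC_{\exp}^*(S)$; the claim is thus a two-line consequence of that black box. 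You instead recompute these integrals by hand, summing $q_K^{a_i\zeta}P_i(s,\zeta)$ over the Presburger cell via geometric and Faulhaber-type closed forms evaluated at definable endpoints; this is essentially a re-derivation of the special case of Theorem \ref{thrm:Cubides-Leenknegt} needed here (a Denef-style Presburger summation). Your route buys self-containedness and makes explicit where convergence enters, at the cost of the case bookkeeping on $(\square_1,\square_2)$ and the exponents $a_i$ that the citation makes unnecessary. One small point to tighten: integrability of $h_i(s,\cdot)$ over an unbounded fiber does not force the constant $a_i$ to have the ``correct sign'' --- $a_i$ is fixed and may, say, be nonnegative while $X_s$ is unbounded above; what integrability forces in that situation is that $\sum_k d_{ik}(s)\zeta^k$ vanishes identically on the infinite fiber (a nonzero polynomial has finitely many roots), so $h_i(s,\cdot)\equiv 0$ and that term contributes $0$, which is still constructible. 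With that reading, all your closed forms land in $\cC(S)$, and multiplying by $e_i(s)$ and summing over $i$ gives $g\in\cC_{\exp}^*(S)$ as required.
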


Notice that for each $i\in I$, $h_i$ is $\cL_2$-constructible. Therefore, by Theorem \ref{thrm:Cubides-Leenknegt}, letting $g_i(s)=\int_{X_s} h_{i}(s,\gamma)|d\gamma|$ we have that 
\begin{align*}
\int_{X_s} f(s,\gamma)|d\gamma | 	& = \int_{X_s} \left(\sum_{i\in I}e_i(s)h_{i}(s,\gamma)\right) |d\gamma |\\
										& = \sum_{i\in I} e_i(s)\int_{X_s} h_{i}(s,\gamma)|d\gamma | = \sum_{i\in I} e_i(s)g_i(s),
\end{align*}
which is a function in $\cC_{\exp}^*(S)$. This completes the proof of the claim. 

\

We will finish the argument by splitting in cases depending on the possible values of $\square_1$ and $\square_2$. 

\

\textbf{Case 1:} Suppose that $\square_1=\square_2=\text{`$<$'}$. In this case the set $X_s$ is finite for each $s \in S$, hence the functions $h_{i}(s,\cdot)$ are integrable over $X_s$. The result follows now by Claim \ref{claim:integrable}. 

\

\textbf{Case 2:} Suppose that $\square_1=\text{`$<$'}$ and $\square_2=\text{`$\emptyset$'}$. Let $j\in I$ be such that $a_{j}=\max_{i\in I} a_i$. In this case, if $a_j\geqslant 0$, then $e_j(s)\sum_{k=1}^{r_j}d_{jk}(s)\zeta^k=0$ for all $(s,\gamma)\in X$, since $f(s, \cdot)$ must be integrable over $X_s$ for each $s \in S$. If this holds, then 
\[
f(s,\gamma)=\sum_{i\neq j, i\in I} e_i(s) q_K^{a_i\zeta}\left(\sum_{k=0}^{r_i} d_{ik}(s)\zeta^k\right).
\]
By induction on $|I|$, either $f$ is identically $0$ or we may suppose that $a_j<0$. In the first case, the theorem clearly holds. In the second case, we have that each function $h_i(s, \cdot)$ is integrable over $X_s$ and we conclude again by Claim \ref{claim:integrable}. A similar argument handles the case $\square_2=\text{`$<$'}$ and $\square_1=\text{`$\emptyset$'}$. Finally, the case $\square_1=\square_2=\text{`$\emptyset$'}$ also reduces to this case by partitioning $X$ into $X_1$ and $X_2$ where 
\[
X_1:=\{(s,\gamma)\in X: 0\leqslant \gamma\} \text{ and } X_2:=\{(s,\gamma)\in X: \gamma<0\}. \qedhere
\]
 
\end{proof}


\section{Closedness under integration of $K$-variables}\label{section: K}

This section is dedicated to the proof of Theorem \ref{thm:Qpvar}. In the first subsection we deal with a special instance of the theorem. 

\subsection{Integrating characters over a definable subset of $K$}\label{section: integrate character}

The goal of this section is to show that functions of the form $s \mapsto \int_{X_s} \psi(x) |dx|$, where $X$ is a definable set, are always exponential*-constructible.  Note that this constitutes a  generalization of our observations in Example \ref{ex: motivation exp*}.

\begin{proposition}\label{prop: int over def set}
Let $X \subseteq S \times K$ be a definable set with bounded fibers $X_s$, i.e., for each $s \in S$, there exists $M \in \Z$ such that $\ord(x) \geqslant M$ for all $x \in X_s $. Then the function
\begin{equation}\label{int over def set}
s \mapsto \int_{X_s} \psi(x) |dx|
\end{equation}
is an element of $\cC^*_{\exp}(S)$.
\end{proposition}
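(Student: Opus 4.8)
The plan is to apply cell decomposition, reduce to a single cell, integrate $\psi$ over it leaf by leaf, and push the remaining summation over the valuative radii of the leaves through Theorem~\ref{thm:Zvar}. By Theorem~\ref{thm:symmetrictrees} I would write $X$ as a finite disjoint union of classical cells and regular clustered cells with fixed tree type; since the fibers $X_s$ are bounded, so are the fibers of each cell, hence $\psi$ is integrable over each, $\vol(X_s)<\infty$, and $s\mapsto\int_{X_s}\psi(x)|dx|$ is the finite sum of the analogous functions for the individual cells, each extended by $0$ off its base via Remark~\ref{rmk:extendingfunctions}. So it suffices to treat one cell $C$, with parameter $\lambda$. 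If $\lambda=0$ its fibers are points and the integral is $0\in\cC^*_{\exp}(S)$; so assume $\lambda\neq 0$, and after a harmless refinement of the cell condition assume each leaf of a fiber $C_s$ is a single ball. Because $C_s$ is bounded, in particular $C_s\neq K$, so $r\colon C\to\Z$, $r(s,t):=\min\{\rho\in\Z : B_\rho(t)\subseteq C_s\}$, is well defined and definable. Moreover $r$ is constant on each leaf of $C_s$, taking there the value equal to the valuative radius $\rho_0$ of that leaf: $B_{\rho_0}(t)$ is exactly the leaf and lies in $C_s$, whereas the larger ball $B_{\rho_0-1}(t)$ contains a point $u$ with $\ord(u-t)=\rho_0-1$ whose angular component relative to the relevant center and $\lambda$ is $\neq 1$, so $u\notin C_s$. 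Hence the leaves of $C_s$ are precisely the nonempty fibers of $r(s,\cdot)$.

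Now set $X':=\{(s,\rho)\in S\times\Z : \rho\in r(\{s\}\times C_s)\}$ and $L:=\{(s,\rho,t)\in X'\times K : (s,t)\in C,\ r(s,t)=\rho\}$. Then $L$ is a definable multi-ball of order $k$ over $X'$ whose fiber $L_{s,\rho}$ is a union of $k$ disjoint balls of radius $\rho$ ($k=1$ for classical cells). By Lemma~\ref{lemma:cancel}, $\int_{L_{s,\rho}}\psi=0$ whenever $\rho\leq 0$, while for $\rho\geq 1$ one has $\int_{L_{s,\rho}}\psi(x)|dx|=q_K^{-\rho}\sum_{B\sqsubseteq L_{s,\rho}}\psi(B)$. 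Summing over leaves (legitimate since $\vol(C_s)<\infty$) gives, with $X'':=X'\cap(S\times\Z_{\geq 1})$ and $F(s,\rho):=q_K^{-\rho}\sum_{B\sqsubseteq L_{s,\rho}}\psi(B)$,
\[
\int_{C_s}\psi(x)\,|dx| \;=\; \int_{(X'')_s} F(s,\rho)\,|d\rho|.
\]
Since $|F(s,\rho)|\leq k\,q_K^{-\rho}$ and $\rho\geq 1$ on $X''$, we get $\textnormal{Int}(F,\Z)=S$, so once we show $F\in\cC^*_{\exp}(X'')$ the proposition follows by applying Theorem~\ref{thm:Zvar} to $F$ on $X''\subseteq S\times\Z$.

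The factor $q_K^{-\rho}$ lies in $\cC(X'')$, so the remaining point — and this is the main obstacle — is to recognize $\sum_{B\sqsubseteq L_{s,\rho}}\psi(B)$ as (a constant multiple of) a generator of $\cC^*_{\exp}$: it is a character sum over a multi-ball whose balls have radius $\rho\geq 1$, in general strictly larger than $1$, rather than a multi-ball on $\pball$. I would fix this by fattening each ball of $L$ to the ball of $\pball$ containing it, passing to $\tilde L:=\{(s,\rho,t)\in X''\times K : \exists t'\,((s,\rho,t')\in L\wedge\ord(t-t')\geq 1)\}$, a definable multi-ball on $\pball$; note $\psi$ is constant on each ball of $L_{s,\rho}$ and agrees there with its value on the enclosing $\pball$-ball. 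Two balls of $L_{s,\rho}$ fall into the same ball of $\pball$ exactly when the order of the difference of their centers — one of the branching heights of the associated $\Sigma_s$ — is $\geq 1$. Using the fixed tree type $(k_1,\dots,k_d)$ from Theorem~\ref{thm:symmetrictrees} and partitioning $S$ into the finitely many definable pieces determined by where the value $1$ falls among the branching heights $\gamma_1(s)>\cdots>\gamma_d(s)$, one finds that on each piece every ball of $\pball$ meeting $L_{s,\rho}$ contains exactly $\nu$ of its balls, for a fixed $\nu$ dividing $k$ that depends only on the piece and the tree type; hence there $\sum_{B\sqsubseteq L_{s,\rho}}\psi(B)=\nu\sum_{B'\sqsubseteq\tilde L_{s,\rho}}\psi(B')$ with $\tilde L$ a definable multi-ball on $\pball$ of order $k/\nu$, so $F=\nu\,q_K^{-\rho}\sum_{B'\sqsubseteq\tilde L_{s,\rho}}\psi(B')\in\cC^*_{\exp}$ over that piece. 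Patching over the pieces via Remark~\ref{rmk:extendingfunctions} gives $F\in\cC^*_{\exp}(X'')$, and Theorem~\ref{thm:Zvar} concludes. For classical cells ($k=1$) the fattening step is automatic and the statement is essentially the computation already carried out in Example~\ref{ex: motivation exp*}.
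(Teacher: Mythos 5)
Your route is organized differently from the paper's (you reduce the leafwise sum to an integral over a $\Z$-variable $\rho$ and invoke Theorem~\ref{thm:Zvar}, whereas the paper never passes through Theorem~\ref{thm:Zvar} here, instead grouping leaves by size into finitely many definable pieces $X^{(j)}$ with constructible volume functions), and for classical cells and \emph{large} regular clustered cells your computation is essentially sound. But there are two genuine gaps, and they sit exactly at the points the paper's proof is built around. First, your claim that the fibers of $r(s,\cdot)$ are precisely the leaves (equivalently, that each leaf is a maximal ball of $C_s$, so that $L$ is a multi-ball of order $k$) is justified only by an angular-component argument relative to a single center: the point $u$ you exhibit outside $C^{\sigma_i}_s$ may well lie in $C^{\sigma_{i'}}_s$ for another section of $\Sigma$. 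For large clustered cells this cannot happen, because all branching heights of $\Sigma_s$ lie below $\alpha(s)$ and hence below every leaf height; but for \emph{small} clustered cells a branching height can coincide with (or lie within $m$ of) the unique leaf height, and then $q_K$ leaves attached to different centers can tile a strictly larger ball. In that situation $r$ is not the leaf radius, and $L$ need not be a multi-ball of order $k$ (its order can even vary with $s$). This is precisely why the paper treats small cells separately rather than uniformly.

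Second, the fattening step is not always legitimate: when $\Sigma_s$ has a branching height at exactly $0$ with $q_K$ branches, the radius-one balls making up $\tilde L_{s,\rho}$ tile complete balls of radius $0$, so they are \emph{not} maximal in $\tilde L_{s,\rho}$ and $\tilde L$ fails to be a multi-ball on $\pball$ in the sense of Definition~\ref{def:multiball}; worse, on that locus the maximal balls of $\tilde L_{s,\rho}$ have radius $\leqslant 0$, so $\psi(B')$ is not even well defined and your identity $F=\nu\, q_K^{-\rho}\sum_{B'\sqsubseteq\tilde L_{s,\rho}}\psi(B')$ breaks down. The paper isolates exactly this degenerate set (called $S'$ in its proof) and shows its contribution vanishes using Lemma~\ref{lemma:cancel} together with the symmetry guaranteed by condition (ii) of Theorem~\ref{thm:symmetrictrees}; you need the same extra case (there $F\equiv 0$ by the same cancellation), after which your partition of $S$ by the position of $1$ among the branching heights, and the constancy of $\nu$ coming from the fixed tree type, do give the claimed generator form. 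Both gaps are repairable with the paper's own devices, but as written they are real: maximality of the balls in Definition~\ref{def:multiball} is doing essential work, and your argument assumes it rather than proving it.
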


For the proof of this proposition we will use  the clustered cell decomposition from Theorem \ref{thm:symmetrictrees}. This theorem states that one can partition the set $X$ into a finite union of classical cells and regular clustered cells. 
%
Recall that a regular clustered cell $C^\Sigma$ of order $k$ can be written as a disjoint union $C^{\sigma_1} \cup \ldots \cup C^{\sigma_k}$ of $k$ sets $C^{\sigma_i}$ that can (non-definably) be described as
\[ C^{\sigma_i} = \{(s,t) \in S \times K \mid \alpha(s) \ < \ \ord(t-\sigma_i(s)) \ < \ \beta(s) \wedge
 t-\sigma_i(s) \in \lambda Q_{n,m}\}.\]
%
%
%
Let us take a moment to explore the case where $X = C^{\Sigma}$.
By Lemma \ref{lemma:cancel} we may assume that the leaves of any fiber $X_s$ are balls of valuation radius at least 1, since leaves with smaller valuation radius will contribute nothing to the integral $\int_{X_s} \psi(x) |dx|$. Hence, we may assume that $\alpha(s) \geq -m$ for all $s \in S$. Note that this assumption will not affect the tree type of $C^{\Sigma}$. Furthermore, it implies 
that there exists a uniform bound on the number of balls in $\pball$ that have  non-empty intersection with $X_s$. For fixed $s$, the union of all these balls is equal to the corresponding fiber of the definable set
\[
\cB := \{(s,x) \in S \times K\mid \exists y \in X_s: \text{ord}(x-y) \geqslant 1\}.
\]

In the proof of Proposition \ref{prop: int over def set} we will partition this set into a finite number of definable multi-balls $\cB^1, \ldots, \cB^{i_0}$ of orders $k_1, \ldots, k_{i_0}$ on $\pball$, such that
\begin{equation}\label{eq right form}
\int_{X_s} \psi(x) |dx| = \sum_{j=1}^{i_0}g_j(s) \sum_{B \sqsubseteq \cB^j_{s}} \psi(B),
\end{equation}
for some $g_1, \ldots, g_{i_0} \in \cC(S)$. These constructible functions $g_i$ denote the volumes of the fibers of certain definable subsets of $S \times K$. 

\begin{proof}[Proof of Proposition \ref{prop: int over def set}:]

We will first consider the case where $X$ is a large, regular clustered cell $C^\Sigma$ of order $k$, using the notation from the previous discussion.  Recall that for such cells, all of the branching heights of $\Sigma_s$ occur below $\alpha(s)$, as discussed in Section \ref{cellsection}. 
As mentioned before, we may also assume that
 $\alpha(s) \geqslant -m$.

 

\

For each ball $B \subseteq \cB_s$ from $\pball$ we want to analyze the set $B \cap X_s$ and its volume. In most cases this set will consist of exactly one leaf from one of the sets $C^{\sigma_i(s)}$, but in some cases several leaves (possibly from different sets $C^{\sigma_{i'}(s)}$) could be contained within the ball $B$. We will have to distinguish between these two cases. Let $\sigma_i$ be a (non-definable) section of $\Sigma$ and $C^{\sigma_i(s), \gamma}$ the leaf of $C^{\sigma_i(s)}$ at height $\gamma$. This leaf has volume $q_K^{-(\gamma +m)}$, which is at most $q_K^{-1}$, since $-(\gamma+m) \leqslant -\gamma + \alpha(s) \leqslant -1$. Depending on the height $\gamma$, two cases may occur.
\begin{figure}[h]
\begin{minipage}{0.4 \linewidth}
\begin{tikzpicture}[scale=0.7]


\draw [dotted] (3,5.5) -- (3,5.8);

\path[fill = gray] (2,3) -- (1,3.5) -- (1.5,4) -- cycle;
\path[fill = gray] (-0.5,3) -- (-1.5,3.5) -- (-1,4) -- cycle;
\path[fill = gray] (4.5,3) -- (3.5,3.5) -- (4,4) -- cycle;

\draw[gray] (2,3) -- (1,3.5) -- (1.5,4) -- cycle;
\draw[gray] (-0.5,3) -- (-1.5,3.5) -- (-1,4) -- cycle;
\draw[gray] (4.5,3) -- (3.5,3.5) -- (4,4) -- cycle;

\draw (3,-1.5) -- (3,5.5);
\draw (3,4.25) -- (1.75,5.5);
\draw (3,3.5) -- (1.75,4.75);
\draw (3,2.75) -- (1.75,4);
\draw (3,2) -- (1.75,3.25);


\draw (1.75,5.5) -- (1.1,5.85) -- (1.4,6.15) -- cycle;
\draw (1.75,4.75) -- (1.1,5.1) -- (1.4,5.4) -- cycle;
\draw (1.75,4) -- (1.1,4.35) -- (1.4,4.65) -- cycle;\draw (1.75,3.25) -- (1.1,3.6) -- (1.4,3.9) -- cycle;



\draw [dotted] (0.5,5.5) -- (0.5,5.8);

\draw (0.5,0) -- (0.5,5.5);
\draw (0.5,4.25) -- (-0.75,5.5);
\draw (0.5,3.5) -- (-0.75,4.75);
\draw (0.5,2.75) -- (-0.75,4);
\draw (0.5,2) -- (-0.75,3.25);


\draw (-0.75,5.5) -- (-1.4,5.85) -- (-1.1,6.15) -- cycle;
\draw (-0.75,4.75) -- (-1.4,5.1) -- (-1.1,5.4) -- cycle;
\draw (-0.75,4) -- (-1.4,4.35) -- (-1.1,4.65) -- cycle;
\draw (-0.75,3.25) -- (-1.4,3.6) -- (-1.1,3.9) -- cycle;



\draw [dotted] (5.5,5.5) -- (5.5,5.8);

\draw (5.5,0) -- (5.5,5.5);
\draw (5.5,4.25) -- (4.25,5.5);
\draw (5.5,3.5) -- (4.25,4.75);
\draw (5.5,2.75) -- (4.25, 4);
\draw (5.5,2) -- (4.25,3.25);


\draw (4.25,5.5) -- (3.6,5.85) -- (3.9,6.15) -- cycle;
\draw (4.25,4.75) -- (3.6,5.1) -- (3.9,5.4) -- cycle;
\draw (4.25,4) -- (3.6,4.35) -- (3.9,4.65) -- cycle;
\draw (4.25,3.25) -- (3.6,3.6) -- (3.9,3.9) -- cycle;


\draw [dashed] (-1.7,3) -- (5.9,3);
\node[{right}] at (5.9,3){\footnotesize 1};
\draw [dotted] (-1.7,2) -- (5.9,2);
\node[{right}] at (5.9,2){\footnotesize $\gamma$};
\draw [dashed] (-1.7,1.5) -- (5.9,1.5);
\node[{right}] at (5.9,1.5){\footnotesize $-m$};

\draw (0.5,0) -- (3,-1);
\draw (5.5,0) -- (3,-1);
\end{tikzpicture}
 \caption{}
\end{minipage}
\begin{minipage}{0.4 \linewidth}
\begin{tikzpicture}[scale=0.7]


\draw [dotted] (3,5.5) -- (3,5.8);

\path[fill=gray] (3,3) -- (3.1,6.1) arc(50:162:1.5) -- cycle;
\path[fill=gray]  (0.5,3) -- (0.6,6.1) arc(50:162:1.5) -- cycle;
\path[fill=gray]  (5.5,3) -- (5.6,6.1) arc(50:162:1.5) -- cycle;

\draw[gray] (3,3) -- (3.1,6.1) arc(50:162:1.5) -- cycle;
\draw[gray] (0.5,3) -- (0.6,6.1) arc(50:162:1.5) -- cycle;
\draw[gray] (5.5,3) -- (5.6,6.1) arc(50:162:1.5) -- cycle;

\draw [dotted] (3,5.5) -- (3,5.8);

\draw (3,-1.5) -- (3,5.5);
\draw (3,4.25) -- (1.75,5.5);
\draw (3,3.5) -- (1.75,4.75);
\draw (3,2.75) -- (1.75,4);
\draw (3,2) -- (1.75,3.25);


\draw (1.75,5.5) -- (1.1,5.85) -- (1.4,6.15) -- cycle;
\draw (1.75,4.75) -- (1.1,5.1) -- (1.4,5.4) -- cycle;
\draw (1.75,4) -- (1.1,4.35) -- (1.4,4.65) -- cycle;
\draw (1.75,3.25) -- (1.1,3.6) -- (1.4,3.9) -- cycle;



\draw [dotted] (0.5,5.5) -- (0.5,5.8);

\draw (0.5,0) -- (0.5,5.5);
\draw (0.5,4.25) -- (-0.75,5.5);
\draw (0.5,3.5) -- (-0.75,4.75);
\draw (0.5,2.75) -- (-0.75,4);
\draw (0.5,2) -- (-0.75,3.25);


\draw (-0.75,5.5) -- (-1.4,5.85) -- (-1.1,6.15) -- cycle;
\draw (-0.75,4.75) -- (-1.4,5.1) -- (-1.1,5.4) -- cycle;
\draw (-0.75,4) -- (-1.4,4.35) -- (-1.1,4.65) -- cycle;
\draw (-0.75,3.25) -- (-1.4,3.6) -- (-1.1,3.9) -- cycle;



\draw [dotted] (5.5,5.5) -- (5.5,5.8);

\draw (5.5,0) -- (5.5,5.5);
\draw (5.5,4.25) -- (4.25,5.5);
\draw (5.5,3.5) -- (4.25,4.75);
\draw (5.5,2.75) -- (4.25, 4);
\draw (5.5,2) -- (4.25,3.25);


\draw (4.25,5.5) -- (3.6,5.85) -- (3.9,6.15) -- cycle;
\draw (4.25,4.75) -- (3.6,5.1) -- (3.9,5.4) -- cycle;
\draw (4.25,4) -- (3.6,4.35) -- (3.9,4.65) -- cycle;
\draw (4.25,3.25) -- (3.6,3.6) -- (3.9,3.9) -- cycle;


\draw [dotted] (-1.7,4.25) -- (5.9,4.25);
\node[{right}] at (5.9,4.25){\footnotesize $\gamma$};
\draw [dashed] (-1.7,3) -- (5.9,3);
\node[{right}] at (5.9,3){\footnotesize 1};
\draw [dashed] (-1.7,1.5) -- (5.9,1.5);
\node[{right}] at (5.9,1.5){\footnotesize $-m$};

\draw (0.5,0) -- (3,-1);
\draw (5.5,0) -- (3,-1);
\end{tikzpicture}
\caption{}
\end{minipage}
\end{figure}
\begin{itemize}
\item[(1)] If $\alpha(s) < \gamma \leqslant 0$, then the unique ball $B \subseteq \cB_s$ from $\pball$ which contains $C^{\sigma_i(s), \gamma}$, contains no other leaves of $C^{\sigma_i(s)}$. Since all the branching heights of $\Sigma_s$ occur below $\alpha(s)$, hence below $0$, $B$ does not intersect any of the other $k-1$ sets $C^{\sigma_{i'}(s)}$. This is the situation depicted in Figure 1.
\item[(2)] If $\gamma >0$, then $C^{\sigma_i(s), \gamma}$ is contained in the ball $B_1(\sigma_i(s)) \subseteq \cB_s$ from $\pball$. For any other (non-definable) section $\zeta$ for which $\sigma_i(s)$ and $\zeta(s)$ are equivalent, we know that $\text{ord}(\sigma_i(s) - \zeta(s)) > \alpha(s) + m \geqslant 0$, hence $B_1(\sigma_i(s)) = B_1(\zeta(s))$. Thus $\cB_s$ contains at most $k$ of these balls. This is the situation depicted in Figure 2.
\end{itemize}
As we have already mentioned, we want to partition the set $\cB$ in a definable way. For each $\gamma$ of  type (1), we will define a set consisting of all the balls that contain a leaf at height $\gamma$. The balls that contain a leaf of type (2) will be collected in an additional definable set. We will now explain how to define these sets uniformly in $s$. For this, note that the leaves of $X_s$ are also the maximal balls of $X_s$, since all the branching heights of $\Sigma_s$ occur below $\alpha(s)$.
\\\\
%
We inductively define, for each $j \geqslant 1$, a definable set $X^{(j)}$ and a definable function $d_j: S \to \Z \cup \{\infty\}$ as follows. For each $s \in S$,   the set $X^{(j)}_s$ is the set containing the largest leaves in $X_s\backslash \left(\cup_{l=1}^{j-1}X^{(l)}_s\right)$. The function $d_j$ is such that for each $s \in S$, the volume of the leaves in $X^{(j)}_s$  equals $q_K^{-d_j(s)}$ when the set $X^{(j)}_s$ is not empty, and $d_j(s) = \infty$ otherwise. 
Note that one always has $d_j(s) \geqslant 1$. Furthermore, $d_j(s) \leqslant m$ if and only if the leaves in $X^{(j)}_s$ are leaves of type (1). 
\\\\
Now let $i_0(s)$ be the smallest positive integer for which $d_{i_0(s)}(s) > m$. This integer could depend on $S$, but in any case, $i_0(s)$ is uniformly bounded on $S$, by $m+1$. Thus there exists a finite definable partition of $S$, such that $i_0(s)$ is constant on each of the sets in the partition. By restricting our clustered cell to any set in this partition,
we may assume that $i_0$ is constant on $S$. This means that for all $s \in S$ there are $i_0-1$ definable sets of leaves of type (1), $X^{(1)}_s,\ldots, X^{(i_0-1)}_s$. Note that each of these sets contains exactly $k$ leaves, one for each equivalence class of centers in $\Sigma_s$. 
\\\\
For each $1 \leqslant j \leqslant i_0-1$, define
\[
\cB^j := \{(s,x) \in S \times K \mid \exists y \in X^{(j)}_s: \text{ord}(x-y) \geqslant 1\}
\]
to be the definable set whose fibers $\cB^j_s$ contain all balls in $\pball$ that have a nonempty intersection with $X^{(j)}_s$. With this definition, the sets $\cB^j$ are disjoint subsets of $\cB$ and each of them is a multi-ball of order $k$ on $\pball$, since all the branching heights of $\Sigma_s$ occur below $\alpha(s)$.
For each $1 \leqslant j \leqslant i_0-1$, we now have that
\begin{equation}\label{subsum fix vol}
\int_{X^{(j)}_s} \psi(x) |dx| = \sum_{B \sqsubseteq \cB^j_s} \psi(B) \cdot \text{Vol}(B \cap X_s) = q_K^{-d_j(s)} \cdot \sum_{B \sqsubseteq \cB^j_s} \psi(B) \in \cC^*_{\text{exp}}(S), 
\end{equation}
where we use the fact that $B\cap X_s$ is one leaf of $X_s$ with volume $q_K^{-d_j(s)}$.

\

For all $j \geqslant i_0$ the leaves in $X^{(j)}_s$ are leaves of type (2). The union of these leaves is the definable set
\[
X'_s := X_s \backslash \left(\cup_{l=1}^{i_0-1}X^{(l)}_s\right).
\]
The balls from $\pball$ in $\cB_s$ that have nonempty intersection with $X'_s$ make up the fibers of the definable set
\begin{align*}
\cB^{i_0} &:= \{(s,x) \in S \times K \mid \exists y \in X'_s: \text{ord}(x-y) \geqslant 1\}\\ 
&= \cB \backslash \left(\cup_{l=1}^{i_0-1} \cB^{l}\right).
\end{align*}
Note that if $\Sigma_s$ has branching heights above $1$, then even for certain $\sigma_i, \sigma_{i'}$ not equivalent at $s$, we will have $B_1(\sigma_i(s)) = B_1(\sigma_{i'}(s))$. Therefore it could happen that $\cB_s$ contains strictly less than $k$ of these balls. Let us denote the number of balls in $\cB^{i_0}_s$ by $k_0(s)$. Since this number may change with $s$, the set $\cB^{i_0}$ is not necessarily a multi-ball. To make it into one, we will have to partition $S$, using the procedure described below.

 It could happen that the balls from $\pball$ in $\cB_s^{i_0}$ are not maximal balls. This happens exactly if $\Sigma_s$ has a branching height at 0 with $q_K$ branches. Let $S' \subseteq S$ be the definable set of $s$ for which this happens, then for each $s \in S'$,
\[
\int_{X'_s} \psi(x) |dx| = \sum_{\substack{B \subseteq \cB_s^{i_0},\\ B \in \pball}} \text{Vol}(X'_s \cap B) \psi(B) =0,
\]
where we have used Lemma \ref{lemma:cancel} and the fact that the volume of $X'_s \cap B$ is the same for each $B \subseteq \cB^{i_0}_s$ with $B \in \pball$, by condition $(ii)$ of Theorem \ref{thm:symmetrictrees}. The constant function 0 is clearly an exponential*-constructible function on $S'$.

From now on we may assume without loss of generality that $S'= \emptyset$.
Since we have $k_0(s) \leq k$ for all $s \in S$, we can (definably) partition $S$ further and reduce to the case where for all $s \in S$, $\cB_s^{i_0}$ contains exactly $k_0$ maximal balls from $\pball$.

By condition $(ii)$ of Theorem \ref{thm:symmetrictrees}, the tree associated to $\Sigma_s$ is highly symmetric, and hence
the sets $X'_s \cap (B_1(c))$ all have the same volume, for each $c \in \Sigma_s$. There are $k_0$ of these sets, hence each of them has volume $\frac{1}{k_0} \cdot \text{Vol}(X'_s)$, which is a constructible function on $S$. We can conclude that
\begin{equation}\label{eq: int over top part, max balls}
\int_{X'_s} \psi(x) |dx| = \frac{1}{k_0} \cdot \text{Vol}(X'_s) \sum_{B \sqsubseteq \cB^{i_0}_s} \psi(B) \in \cC_{\exp}^*(S).
\end{equation}
The equations \eqref{subsum fix vol} and \eqref{eq: int over top part, max balls} give us the form of \eqref{eq right form}.

\

Now consider the case where $X$ is a small clustered cell. For such a cell, each fiber only has leaves at a single height, but we can no longer assure that the branching heights will necessarily occur below $\alpha(s)$. Still, the reader can check that this case can be proven similarly as the case of large cells, by partitioning $S$ in the same way as for case (2) above. The proof for classical cells follows the same structure as the proof for large cells and will be left to the reader. This concludes the proof of this theorem.
%
\end{proof}

\subsection{The proof of Theorem \ref{thm:Qpvar}}

We are ready to prove Theorem \ref{thm:Qpvar}, which we restate for the reader's convenience.

\begin{customthm}{B}\label{thm:Qpvar}  Let $X \subseteq S \times K^n$ be a definable set and $f\in \cC^*_\text{exp}(X)$. If $f$ can be written in $n$-normal form, then there exists $g\in \cC_{\exp}^*(S)$ such that, for all $s \in S$,
 \[
g(s) = \int_{X_s} f(s,x) |dx|.
 \]
\end{customthm}


\begin{proof}
 Since $f$ can be written in $n$-normal form, by additivity of integration we are reduced to prove the case where $f$ is of the form
\[
f(s,x) := h(s,x) \sum_{B \sqsubseteq A_{s,x}} \psi(B),
\]
with $A \subseteq X\times K$ a definable multi-ball on $\pball$, $\textnormal{Int}(h,K^n) = S$ and $\textnormal{Int}(f, K^n) = S$.

Put $Y := \{(s,b,x) \in S \times K^{n+1} \mid b \in A_{s,x}\}$, which can have empty fibers $Y_{s,b}$ for some $(s,b) \in S \times K$. Since $h(s, \cdot)$ is integrable over $X_s$ for each $s \in S$, we can apply Fubini to change the order of integration. Hence
\begin{align*}
\int_{X_s} h(s,x) \cdot \sum_{B\sqsubseteq A_{s,x}} \psi(B) |dx| &= q_K\int_{X_s} \int_{A_{s,x}}h(s,x) \psi(b) |db||dx|\\
&= q_K \int_{K} \int_{Y_{s,b}} h(s,x)\psi(b) |dx||db|\\
&= q_K \int_K \psi(b) \Big( \int_{Y_{s,b}} h(s,x) |dx| \Big) |db|.
\end{align*}
The set $Y_{s,b}$ is a definable subset of $K^n$, so by Theorem \ref{thrm:Cubides-Leenknegt} the function
\[
S \times K \to \QQ: (s, b) \mapsto \int_{Y_{s,b}} h(s,x) |dx|
\]
is a constructible function. This means that there exist definable functions $\alpha_i\colon  S \times K \to \ZZ$, $\beta_{ij}\colon  S \times K \to \ZZ$ and constants $c_i \in \QQ$ such that, for all $s \in S$, $b \in K$,
\[
\int_{Y_{s,b}} h(s,x) |dx| = \sum_{i=1}^m c_i q_K^{\alpha_i(s,b)} \prod_{j=1}^{r} \beta_{ij}(s,b).
\]
For each $\gamma = (\gamma_i)_i \in \ZZ^{m(r+1)}$ we denote the rational number $\sum_{i=1}^m c_i q_K^{\gamma_i} \prod_{j=1}^{r} \gamma_{jm + i}$ by $C_\gamma$ and we consider the definable set
\[
D := \{(s,\gamma,b)\in S \times \ZZ^{m(r+1)} \times K \mid \big(\alpha_1(s,b), \ldots, \alpha_m(s,b), \beta_{11}(s,b), \ldots, \beta_{mr}(s,b)\big) = \gamma\},
\]
Notice that for a fixed $s \in S$, the fibers $D_{s,\gamma}$ partition $K$. We can definably distinguish between the fibers $D_{s, \gamma}$ that are bounded and the ones that are not:
\[
G := \{(s,\gamma) \in S \times \ZZ^{m(r+1)} \mid \exists \delta \in \ZZ : D_{s,\gamma} \subseteq B_\delta(0)\}.
\]
 This gives us, for each $s \in S$,
\begin{align*}
g(s) &= \int_{X_s} h(s,x) \cdot \sum_{B\sqsubseteq A_{s,x}} \psi(B) |dx|\\
&= q_K \cdot \int_{K} \psi(b) \sum_{i=1}^m c_i q_K^{\alpha_i(s,b)} \prod_{j=1}^{r} \beta_{ij}(s,b) |db|\\
&= q_K \sum_{\gamma \in \ZZ^{m(r+1)}} \int_{D_{s,\gamma}} C_\gamma\psi(b) |db|\\
&=q_K \sum_{\gamma \in G_s} \int_{D_{s,\gamma}} C_\gamma \psi(b) |db| + q_K \sum_{\gamma \in \ZZ^{m(r+1)} \backslash G_s} \int_{D_{s,\gamma}}C_\gamma\psi(b) |db|\\
&=q_K \int_{G_s} \Big(C_\gamma \int_{D_{s,\gamma}} \psi(b) |db|\Big) |d\gamma|,
\end{align*}
where the last equality follows from the fact that each of the integrals $\int_{D{s,\gamma}} C_\gamma \psi(b) |db|$ exists, which means that if $\gamma \notin G_s$, then $C_\gamma = 0$. Using Proposition \ref{prop: int over def set} we see that the function
\[
l \colon (s, \gamma) \mapsto C_\gamma \int_{D_{s,\gamma}} \psi(b) |db|
\]
is an exponential*-constructible function on $G \subseteq S\times \ZZ^{m(r+1)}$. By applying Theorem \ref{thm:Zvar} to $l$ we can conclude that $g \colon s \mapsto q_K \int_{G_s} l(s,\gamma) |d\gamma|$ is an exponential*-constructible function on $S$.
\end{proof}

\subsection*{Acknowledgements} 
The authors would like to thank Raf Cluckers for stimulating conversations during the preparation of this paper. The first author was supported by the European Research Council under the European Community's Seventh Framework Programme (FP7/2007-2013) with ERC Grant Agreement nr. 615722 (MOTMELSUM).
The second author was supported by ERC grant agreements nr. 615722
(MOTMELSUM) and nr. 637027 (TOSSIBERG). During the realization of this project, the third author was a postdoctoral fellow of the Fund for Scientific Research - Flanders (Belgium) (F.W.O.).

\bibliographystyle{acm}
\bibliography{../Bibliografie}

\end{document}